\newtheorem{thm}{Theorem}[section]
\newtheorem{cor}[thm]{Corollary}
\newtheorem{lem}[thm]{Lemma}
\newtheorem{prop}[thm]{Proposition}
\theoremstyle{definition}
\theoremstyle{remark}
\newtheorem{rem}[thm]{Remark}
\numberwithin{equation}{section}
\newcommand{\R}{\mathbb R}
\newcommand{\si}{\sigma}
\newcommand{\la}{\lambda}
\newcommand{\C}{\mathbb C }
\newcommand{\B}{{\mathcal B}}
\newcommand{\pa}{\partial }
\newcommand{\La}{\langle}
\newcommand{\Ra}{\rangle}
\newcommand{\Hc}{\mathcal H}
\newcommand{\sn}{{\mathbf S}^{n-1}}
\newcommand{\sN}{{\mathbf S}^{N-1}}
\newcommand{\om}{ \omega}
\newcommand{\D}{\Delta}
\newcommand{\ap}{\alpha}
\newcommand{\bt}{\beta}
\newcommand{\dl}{\delta}
\newcommand{\N}{\nabla}
\newcommand{\Gm}{\Gamma}
\newcommand{\gm}{\gamma}
\newcommand{\K}{\kappa }
\newcommand{\Dk}{\Delta_{\kappa}}
\newcommand{\pl}{\varphi_{\lambda}}
\begin{document}

\title[]{On the chaotic behavior of the Dunkl heat\\ semigroup
on weighted $ L^p $ spaces}
\author{Pradeep Boggarapu}
\author{S. Thangavelu}

%------------------------------------------------------------------

\address{department of mathematics,
Indian Institute of Science, Bangalore - 560 012, India}
\email{pradeep@math.iisc.ernet.in}
\email{veluma@math.iisc.ernet.in}

%------------------------------------------------------------------

\keywords{Dunkl transform, heat semigroup, chaotic semigroup,
 hypercyclic, periodic points, spectrum of Dunkl Laplacian.}
 \subjclass[2010] {Primary: 43A85;
Secondary: 22E30.}

%-------------------------------------------------------------------

\begin{abstract}
In this paper we study the chaotic behaviour of the heat semigroup
generated by the Dunkl-Laplacian on weighted $ L^p$ spaces. In the
case of the heat semigroup associated to the standard Laplacian we
obtain a complete picture on the spaces $ L^p(\R^n,
(\varphi_{i\rho}(x))^2 dx) $ where $ \varphi_{i\rho} $ is the
Euclidean spherical function. The behaviour is very similar to the
case of the Laplace-Beltrami operator on non-compact Riemannian
symmetric spaces studied by Pramanik and Sarkar.
\end{abstract}

% -------------------------------------------------------------------

\maketitle
% --------------------------------------------------------------------

\section{Introduction}

 The study of chaotic dynamics of the heat semigroup on Riemannian
 symmetric spaces of noncompact type, which started with the work of Ji
 and Weber \cite{JW} has been completed recently by
 Pramanik and Sarkar \cite{PS} (see also Sarkar \cite{S}).
 As they have remarked, the chaotic behavior of the
 heat semigroups on $L^p$ spaces seems to be a non-Euclidean
 phenomenon. In order to state the results of Pramanik and Sarkar
 and make a comparison with the Euclidean case, we need to recall
 several definitions from Ergodic theory. We closely follow the
 terminologies used in \cite{PS} referring to \cite{DS} and \cite{JW} for more details.\\

   Let $T_t$, $t>0$ be a strongly continuous semigroup on a Banach
 space $\B$.

\begin{enumerate}
    \item We say that $T_t$ is hypercyclic if there exists a $v\in
    \B$ such that $\{ T_tv : t\geq 0 \}$ is dense in $\B$.

    \item If there exist a $v \in \B$ such that $T_tv=v$ for some
    $t>0$ then we say that $v$ is periodic for $T_t$.

    \item We say that $T_t$ is  chaotic if it is
    hypercyclic and if its periodic points are dense in $\B$.
\end{enumerate}

   Let $\D = -\sum_{j=1}^n \frac{\pa^2}{\pa x_j^2}$ be the standard
 Laplacian on the Euclidean space $\R^n$. The semigroup $T_t=e^{-t\D}$
 generated by $\D$ fails to be chaotic on $L^p(\R^n)$, $1\leq p \leq \infty$.
 This can be easily checked by appealing to the following theorem
 proved in \cite{DE}.
 \begin{thm}[de Laubenfels-Emamirad \cite{DE}]\label{T1}
 If $T_t$ is a chaotic semigroup generated by $A$ in a Banach space
 $\B$ then the cardinality of $\si_{pt}(A)\cap i\R$ is infinite,
 where $\si_{pt}(A)$ is the point spectrum of $A$.
 \end{thm}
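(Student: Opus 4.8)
The plan is to argue by contraposition: assuming $\si_{pt}(A)\cap i\R$ is a finite set $\{i\mu_1,\dots,i\mu_N\}$, I will show that $T_t$ fails to be chaotic, by producing a uniform bound on $\|T_t\|$ that is incompatible with hypercyclicity. Throughout I use the standard facts that a generator $A$ of a $C_0$-semigroup is closed and densely defined — so each eigenspace $E_j:=\ker(A-i\mu_j)$ is a closed subspace of $\B$ — and that every orbit map $t\mapsto T_tv$ is continuous.

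\emph{Step 1: every periodic point is a finite sum of imaginary eigenvectors.} Let $T_\tau v=v$ for some $\tau>0$. Then $\phi(t):=T_tv$ is a continuous $\tau$-periodic $\B$-valued function, and its Fourier coefficients $v_k:=\tau^{-1}\int_0^\tau e^{-2\pi ikt/\tau}T_tv\,dt$ (Bochner integrals) satisfy, by translation-invariance of integration over a full period, $T_sv_k=e^{2\pi iks/\tau}v_k$ for all $s\ge 0$; since $s\mapsto T_sv_k$ is then differentiable at $0$, one gets $v_k\in D(A)$ and $Av_k=(2\pi ik/\tau)v_k$. So every nonzero $v_k$ is an eigenvector with eigenvalue in $i\R$, and under the hypothesis $v_k\ne 0$ forces $2\pi k/\tau\in\{\mu_1,\dots,\mu_N\}$. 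Hence only finitely many $v_k$ survive, the Fourier series of $\phi$ is a finite trigonometric polynomial, and by uniqueness of Fourier coefficients $\phi$ equals it; in particular $v=\phi(0)\in E_1+\cdots+E_N=:F$. Since $T_t$ is chaotic, its periodic points are dense, so $\overline F=\B$.

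\emph{Step 2: $F$ is closed and $T_t$ is uniformly bounded.} As the $\mu_j$ are distinct, the decomposition $v=v_1+\cdots+v_N$ with $v_j\in E_j$ is unique on $F$: if $\sum_je^{i\mu_jt}v_j\equiv 0$, evaluating at $N$ suitable times and inverting a Vandermonde matrix gives all $v_j=0$. Fix $t_0>0$ so that $z_j:=e^{i\mu_jt_0}$ are pairwise distinct (possible since the $\mu_j$ are distinct) and let $V=(z_j^k)_{0\le k\le N-1,\,1\le j\le N}$; a direct computation shows that the coordinate map $v\mapsto v_j$ on $F$ coincides with the \emph{bounded} operator $P_j:=\sum_{k=0}^{N-1}(V^{-1})_{jk}T_{kt_0}$ on $\B$, whose range lies in $\overline{E_j}=E_j$ and which satisfies $\sum_jP_j=I$ on the dense subspace $F$, hence on $\B$. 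Therefore $\B=E_1\oplus\cdots\oplus E_N$ as a topological direct sum, $T_t=\sum_je^{i\mu_jt}P_j$, and $\|T_t\|\le\sum_j\|P_j\|$ for every $t\ge 0$.

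\emph{Step 3: conclusion.} A uniformly bounded semigroup has all orbits bounded, so no orbit can be dense in the nontrivial Banach space $\B$; hence $T_t$ is not hypercyclic, contradicting chaoticity, and so $\si_{pt}(A)\cap i\R$ must be infinite. I expect the crux to be Step 2: passing from the qualitative statement that the imaginary eigenvectors span a dense subspace to the quantitative statement that $\|T_t\|$ stays bounded — this is exactly the point where the finiteness of the imaginary point spectrum is converted into the uniform bound that defeats hypercyclicity. Steps 1 and 3 are comparatively routine, using only Fourier analysis of vector-valued periodic functions and the elementary observation that a dense subset of a nonzero Banach space is unbounded.
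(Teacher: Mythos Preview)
The paper does not supply its own proof of this theorem: it is quoted from de~Laubenfels--Emamirad \cite{DE} and used as a black box, so there is no ``paper's proof'' to compare your argument against. That said, your argument is correct and self-contained, and it is worth recording that it works.

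A couple of minor remarks. In Step~1, the assertion that a continuous $\tau$-periodic $\B$-valued function with only finitely many nonzero Fourier coefficients equals the corresponding trigonometric polynomial deserves a word of justification: apply any $\ell\in\B^*$ to reduce to the scalar case, or invoke Fej\'er's theorem for Bochner-integrable functions. In Step~2, the existence of $t_0>0$ with the $e^{i\mu_j t_0}$ pairwise distinct is clear (the bad set of $t_0$'s is countable), and your formula $P_j=\sum_{k=0}^{N-1}(V^{-1})_{jk}T_{kt_0}$ does indeed give bounded operators with $P_j(\B)\subset\overline{P_j(F)}\subset\overline{E_j}=E_j$ and $\sum_jP_j=I$ by density; this is the heart of the matter and you have it right. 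Once $T_t=\sum_je^{i\mu_jt}P_j$, the uniform bound $\|T_t\|\le\sum_j\|P_j\|$ kills hypercyclicity on any nonzero $\B$, completing the contraposition.

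In summary: your proof is sound. The paper simply cites the result, so your write-up supplies a proof the paper omits rather than an alternative to one it gives.
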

 Indeed, the spectrum of $\D $ on $L^p(\R^n)$ is independent of $p$ and
 equals $[0,\infty )$. Consequently, $e^{-t\D}$ cannot be chaotic on any of the $L^p$
 spaces.\\

 Compare this with the case of the heat semigroup generated by the
 Laplace-Beltrami operator $\D_X$ on a noncompact Riemannian
 symmetric space $X$. When $p>2$, there are plenty of
 eigenfunctions, provided by the elementary spherical functions,
 in $L^p(X)$ with purely imaginary eigenvalues. This fact has been
 utilized in obtaining a complete picture of the chaotic behavior
 of $e^{-t\D_X}$ on $L^p(X)$ in the article \cite{PS} where
 the authors have established the following result.\\

\begin{thm} [Pramanik-Sarkar \cite{PS}] \label{T2} For any Riemannian
symmetric space X of non-compact type, let  $T_t $ be the
semigroup generated by $ \Delta_X, T_t^c = e^{ct} T_t $ and let $ c_p =
\frac{4|\rho|^2}{p {p'}}$ where $ \rho $ is the half-sum of the positive roots. Then the following conclusions hold:
(a) For $2<p<\infty, T_t^c $ is chaotic on $L^p(X)$ if and only if
$c>c_p.$ (b) For $p= \infty, T_t^c $ is non-chaotic on $
L^\infty(X)$ for all $ c \in \R.$ (c) However, $T_t^c $ is
subspace-chaotic on $ L^\infty(X) $ if and only if $c > 0.$
\end{thm}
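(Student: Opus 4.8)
The plan is to combine the known description of the $ L^p $-spectrum of the Laplace--Beltrami operator with a Godefroy--Shapiro type sufficient condition for chaoticity of a $ C_0 $-semigroup (Desch--Schappacher--Webb): if $ A $ generates a $ C_0 $-semigroup on a separable Banach space and the linear spans of the eigenvectors of $ A $ with eigenvalue in $ \set{\mathrm{Re}\,\mu>0} $, in $ \set{\mathrm{Re}\,\mu<0} $, and in $ 2\pi i\,\mathbb Q $ are each dense, then the semigroup is chaotic. Normalise $ \D_X $ so that $ \D_X\varphi_\la=-(\La\la,\la\Ra+\La\rho,\rho\Ra)\varphi_\la $ for the elementary spherical function $ \varphi_\la $, $ \la\in\mathfrak{a}^*_{\C} $, and recall Anker's sharp estimate: for $ 2<p<\infty $, $ \varphi_\la\in L^p(X) $ exactly when $ \IM\la $ lies in the interior of $ \dl_p\,\mathrm{conv}(W\rho) $, where $ \dl_p=\abs{\frac{2}{p}-1} $. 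Writing $ \la=\xi+i\eta $ with $ \xi,\eta $ real, each translate $ x\mapsto\varphi_\la(g^{-1}x) $, $ g\in G $, is an $ L^p $-eigenfunction of the generator $ A=\D_X+c $ of $ T_t^c $ with eigenvalue $ \mu(\la)=c-\abs{\xi}^2+\abs{\eta}^2-\abs{\rho}^2-2i\La\xi,\eta\Ra $; as $ \la $ ranges over the admissible tube, $ \mu(\la) $ fills the open parabolic region whose vertex (rightmost point) sits at $ \mathrm{Re}\,\mu=c-\abs{\rho}^2(1-\dl_p^2)=c-c_p $, since $ \abs{\rho}^2(1-\dl_p^2)=4\abs{\rho}^2/(pp')=c_p $. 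As $ \si_{pt}(A) $ on $ L^p(X) $ is contained in the closure of this region (the known computation of $ \si(\D_X) $ on $ L^p $), it meets $ i\R $ only if $ c>c_p $; Theorem~\ref{T1} therefore forces $ c>c_p $ for chaoticity, which is the ``only if'' part of (a).

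For the ``if'' part of (a), verify the three density hypotheses on the separable space $ L^p(X) $. If $ c>c_p $ the vertex $ c-c_p $ is positive, so $ \mu(\cdot) $ maps some nonempty open subset of the tube into $ \set{\mathrm{Re}\,\mu>0} $; it always maps some open subset into $ \set{\mathrm{Re}\,\mu<0} $; and, since $ \mu(\cdot) $ covers a nondegenerate sub-interval of $ i\R $, it takes the value $ 2\pi i q $ for every rational $ q $ in a nondegenerate interval. The remaining point — and the crux of the proof — is the density statement: the span of $ \set{\varphi_\la(g^{-1}\cdot):g\in G,\ \la\in S} $ is dense in $ L^p(X) $ whenever $ S $ is a nonempty open subset of the tube $ \set{\la:\varphi_\la\in L^p(X)} $, and, more generally, whenever $ \set{\La\la,\la\Ra:\la\in S} $ is dense in an open piece of a line $ \set{\mathrm{Re}\,z=\text{const}} $. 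Indeed, if $ f\in L^{p'}(X) $ annihilates all such translates, then unwinding the spherical transform gives $ \widehat f(\la,b)=0 $ for $ \la\in S $ and a.e.\ $ b\in K/M $, where $ \widehat f $ is the Helgason--Fourier transform of $ f $. Since $ p'<2 $, the Kunze--Stein phenomenon shows $ \la\mapsto\widehat f(\la,\cdot) $ extends holomorphically to the interior of the tube $ \mathfrak{a}^*+i\,\dl_{p'}\mathrm{conv}(W\rho) $, which is the same as $ \mathfrak{a}^*+i\,\dl_p\mathrm{conv}(W\rho) $ because $ \dl_{p'}=\dl_p $; so, by the identity theorem (in the first case), or by continuity together with the uniqueness of holomorphic functions across the generic real hypersurface $ \set{\mathrm{Re}\,\La\la,\la\Ra=\text{const}} $ on which $ \widehat f $ must then vanish (in the second case), $ \widehat f\equiv0 $ on that tube, in particular for real $ \la $, whence $ f=0 $ by Helgason's inversion theorem. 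This establishes chaoticity of $ T_t^c $ on $ L^p(X) $ for $ 2<p<\infty $ and $ c>c_p $, completing (a).

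Part (b) follows from a separability obstruction: any Banach space carrying a hypercyclic semigroup is separable (the closure of a dense orbit is separable), but $ L^\infty(X) $ is not, so $ T_t^c $ is never chaotic on $ L^\infty(X) $. For (c), read subspace-chaotic as: some closed $ T_t^c $-invariant subspace carries a chaotic restriction. Work inside $ C_0(X) $, which is separable ($ X $ is second countable), is $ T_t^c $-invariant by the Feller property of the heat semigroup, and on which $ T_t^c $ is strongly continuous. The eigenfunctions available here are the translates $ \varphi_\la(g^{-1}\cdot) $ with $ \IM\la $ in the interior of $ \mathrm{conv}(W\rho) $, which lie in $ C_0(X) $; these correspond to $ \dl_\infty=1 $, so their eigenvalues $ \mu(\la) $ fill the open parabolic region with vertex at $ \mathrm{Re}\,\mu=c $ (i.e.\ $ c_\infty=0 $). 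If $ c>0 $, this region meets both open half-planes and a nondegenerate segment of $ i\R $, and the density of the relevant spans follows exactly as before: a functional on $ C_0(X) $ is a finite measure $ \nu $, the transform $ \la\mapsto\widehat\nu(\la,\cdot) $ is holomorphic for $ \IM\la $ in the interior of $ \mathrm{conv}(W\rho) $ (the defining integrals being dominated, locally uniformly in $ \la $, by the bounded function $ \varphi_{-i\IM\la} $), and its vanishing forces $ \nu=0 $. The criterion then makes $ T_t^c $ chaotic on $ C_0(X) $, i.e.\ subspace-chaotic on $ L^\infty(X) $. Conversely, if $ c\le0 $ the point spectrum of $ A=\D_X+c $ on $ L^\infty(X) $ (equivalently, on the predual $ L^1(X) $) lies in the closed parabolic region with vertex at $ \mathrm{Re}\,\mu=c\le0 $, hence $ \si_{pt}(A)\cap i\R\subseteq\set{0} $; since the point spectrum of the generator of the restriction to any closed invariant subspace is contained in $ \si_{pt}(A) $, Theorem~\ref{T1} forbids chaoticity of every such restriction, so $ T_t^c $ is not subspace-chaotic.

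The step I expect to be the main obstacle is the density statement underlying (a) and (c): it rests on the sharp $ L^p $-mapping properties of the spherical functions, on the Kunze--Stein estimates (to place the Helgason--Fourier transform of an $ L^{p'} $ function in a holomorphy tube wide enough to contain the chosen parameter set), and on Helgason's inversion theorem; in higher rank one must additionally be careful that the parameter sets producing eigenvalues in $ \set{\mathrm{Re}\,\mu>0} $, $ \set{\mathrm{Re}\,\mu<0} $, and $ 2\pi i\,\mathbb Q $ are large enough — open, respectively dense in an open piece of a real hypersurface in $ \mathfrak{a}^*_{\C} $ after applying $ \la\mapsto\La\la,\la\Ra $ — for the holomorphic uniqueness arguments to apply.
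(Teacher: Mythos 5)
Theorem \ref{T2} is quoted from Pramanik--Sarkar \cite{PS} and the paper itself gives no proof of it; your outline reconstructs essentially the same strategy that \cite{PS} uses and that this paper imitates for its Euclidean and Dunkl analogues (Theorems \ref{T3}--\ref{T6}): the Desch--Schappacher--Webb criterion applied to spans of translated spherical eigenfunctions, with density obtained by a duality-plus-holomorphy (identity theorem / vanishing on a hypersurface) argument, and the necessity parts via the de Laubenfels--Emamirad criterion (Theorem \ref{T1}) combined with the known location of the $L^p$ and $L^\infty$ point spectra. Your sketch is sound modulo the standard symmetric-space inputs you invoke; the only slip is the parenthetical claim in (c) that the $L^\infty$ point spectrum can be read off ``equivalently on the predual $L^1(X)$,'' which is false but also not needed, since the argument only requires the known description of $\sigma_{pt}(\Delta_X)$ on $L^\infty(X)$ itself.
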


 The analogue of spherical functions in the Euclidean set up are
 the Bessel functions defined by
 $$\varphi_{\la}(x)=\int_{\sn}e^{i\la x\cdot \om}d\si(\om)$$
 where $d\si$ is the surface measure on $\sn$ and $\la\in\C$. These are all
 eigenfunctions of the Laplacian with eigenvalue $\la^2:$
 $\D\varphi_{\la}=\la^2\varphi_{\la}$ and when $\la\in\R$, $\pl\in L^p(\R^n) $,
 $p>\frac{2n}{n-1}$. But when $\la \in \C $, they have exponential growth.
 Indeed,
 $$\pl(x)=c_n\frac{J_{\frac{n}{2}-1}(\la|x|)}{(\la|x|)^{\frac{n}{2}-1}}$$
 where $J_{\ap}(t)$ is the Bessel function of type $\ap$. It follows that
 $$|\pl(x)|\leq  \varphi_{i\Im(\lambda)}(x) = c_n I_{\frac{n}{2}-1}
 (\Im(\la)|x|)(|\Im(\la)|\:|x|)^{-\frac{n}{2}+1}$$
 where $I_{\ap}(t)=J_{\ap}(it)$ is the modified Bessel function. Using the asymptotic
 behavior of $I_{\ap}(t)$ we see that
 $$|\pl(x)|\leq C_\la|x|^{-(\frac{n-1}{2})}e^{|\Im(\la)||x|}.$$
 It then follows that for any $\rho >0,$ and $ p \neq 2,$
 $$\int_{\R^n}|\pl(x)|^p (1+|x|)^{\frac{(n-1)}{2}p} e^{-\rho|p-2|\:|x|}dx < \infty $$
 provided $|\Im(\la)|< \gm_p\rho$ where $\gm_p = \Big |\frac{2}{p}-1\Big |$.
 For $ p > 2,$ we can rewrite the above as
$$ \int_{\R^n} |\varphi_{\lambda}(x) \varphi_{i\rho}(x)^{-1}|^p (\varphi_{i\rho}(x))^2 dx < \infty.$$  \\

It turns out that the functions  $ \varphi_{\lambda}(x)
\varphi_{i\rho}(x)^{-1} $ are eigenfunctions of a modified
Laplacian. Indeed, if we let $ \tilde{\Delta} $ to stand for the
operator defined by $ \tilde{\Delta}f = \varphi_{i\rho}^{-1}
\Delta(f\varphi_{i\rho})$ then clearly $
\tilde{\Delta}(\frac{\varphi_\lambda}{\varphi_{i\rho}}) =
\lambda^2 (\frac{\varphi_\lambda}{\varphi_{i\rho}}).$ From the
very definition of the functions $ \varphi_\lambda $ we have
$$ \varphi_{i\rho}(x) = \int_{S^{n-1}} e^{\rho x \cdot \omega}
d\sigma(\omega) $$ and hence a simple calculation shows that
$$ \tilde{\Delta}f(x) = (\Delta+\rho^2)f(x) - 2 (\varphi_{i\rho}(x))^{-1}\sum_{j=1}^n
\frac{\partial}{\partial x_j}f(x) \frac{\partial}{\partial x_j}
\varphi_{i\rho}(x) .$$ This can be further
simplified by making use of Hecke-Bochner formula for the Fourier
transform. Note that
$$  \frac{\partial}{\partial x_j}\varphi_{i\rho}(x) = \rho \int_{S^{n-1}}
\omega_j e^{\rho x \cdot \omega} d\sigma(\omega) $$ and as $
\omega_j $ are spherical harmonics on $ S^{n-1} $ it follows that
(see Eqn. 15, page 37 in \cite{H})
$$ \int_{S^{n-1}} \omega_j e^{\rho x \cdot \omega} d\sigma(\omega) =
c_n \frac{x_j}{|x|} \frac{I_{n/2}(\rho |x|)}{(\rho |x|)^{n/2-1}}.
$$ Therefore, if we let $ w_n(x) = c_n \frac{I_{n/2}(\rho
|x|)}{I_{n/2-1}(\rho |x|)} $ then
$$ \tilde{\Delta} = \Delta- 2\rho^2 w_n(x) (x \cdot \N) +\rho^2 $$
is a first order perturbation of the Laplacian $ \Delta.$ This
operator $ \tilde{\Delta} $ has eigenfunctions, namely  $
\varphi_\lambda(x) (\varphi_{i\rho}(x))^{-1} $ which belong to $
L^p(\R^n, (\varphi_{i\rho}(x))^2 dx) $ for $ p > 2 $ provided $
|\Im(\lambda)| < \gamma_p \rho.$
It is interesting to note the similarity between $ \tilde{\Delta} $
and the Laplacian $ \Delta_X $ on symmetric spaces.\\

The above suggests that we study the chaotic behaviour of  the
semigroup $ \tilde{T_t} $ generated by $ \tilde{\Delta}+\rho^2 $
on the weighted $ L^p $ spaces
$ L^p(\R^n, (\varphi_{i\rho}(x))^2 dx) .$  This semigroup
is simply obtained from $ T_t = e^{-t(\Delta+\rho^2)} $ by conjugation:
$ \tilde{T_t}(f)(x) = (\varphi_{i\rho}(x))^{-1} T_t(f\varphi_{i\rho})(x).$
In this article our main aim is to prove the following result regarding
the chaotic behaviour of this semigroup.\\

\begin{thm}\label{T3}
 (a) For any $  2 < p < \infty, $ the semigroup $ \tilde{T}_t^c = e^{ct}~ \tilde{T}_t $ is chaotic on
 $ L^p(\R^n, (\varphi_{i\rho}(x))^2 dx) $ if and only if $ c > c_p.$ (b) For
 $ p = \infty, \tilde{T}_t^c $ is not chaotic on $ L^\infty(\R^n) $ for any $ c \in \R.$
(c) For $ 1 \leq p \leq 2,~ \tilde{T}_t^c $ is neither hypercyclic nor
has periodic points on $  L^p(\R^n, (\varphi_{i\rho}(x))^2 dx). $ \\
 \end{thm}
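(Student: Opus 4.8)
The plan is to pass to the conjugate side. The multiplication operator $U\colon f\mapsto f\varphi_{i\rho}$ is an isometric isomorphism of $L^p(\R^n,(\varphi_{i\rho})^2\,dx)$ onto $Y_p:=L^p(\R^n,(\varphi_{i\rho})^{2-p}\,dx)$, and by the very definition of $\tilde T_t$ it intertwines $\tilde T_t^c$ with $S_t:=e^{t(c-\rho^2)}e^{-t\Delta}$, the suitably scaled Euclidean heat semigroup. Two elementary facts are used throughout: the identity $c_p=\rho^2(1-\gamma_p^2)$, immediate from $\gamma_p=|2/p-1|$ and $c_p=4\rho^2/(pp')$; and the asymptotics $\varphi_{i\rho}(x)\sim C|x|^{-(n-1)/2}e^{\rho|x|}$, so the weight $(\varphi_{i\rho})^{2-p}$ decays like $e^{-(p-2)\rho|x|}$ when $p>2$ and grows like $e^{(2-p)\rho|x|}$ when $p<2$. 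Part (b) is then immediate, since $L^\infty(\R^n,(\varphi_{i\rho})^2\,dx)=L^\infty(\R^n)$ is non-separable while the orbit of any vector under a strongly continuous semigroup is separable; so no $\tilde T_t^c$ can be hypercyclic, hence none is chaotic.

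For the ``if'' half of (a) I would apply the Desch--Schappacher--Webb criterion \cite{DSW} in the form used in \cite{JW,PS}, which permits countable families of holomorphic eigenvector maps. For $\lambda\in\C$, $\omega\in\sn$ set $e_{\lambda,\omega}(x)=e^{i\lambda\,\omega\cdot x}/\varphi_{i\rho}(x)$; since $\Delta e^{i\lambda\omega\cdot x}=\lambda^2e^{i\lambda\omega\cdot x}$ one gets $\tilde T_t^c e_{\lambda,\omega}=e^{t\mu(\lambda)}e_{\lambda,\omega}$ with $\mu(\lambda)=c-\rho^2-\lambda^2$, and $e_{\lambda,\omega}\in L^p(\R^n,(\varphi_{i\rho})^2\,dx)$ exactly when $|\IM\lambda|<\gamma_p\rho$. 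On the half-strip $S^+=\{0<\IM\lambda<\gamma_p\rho\}$ the map $\mu$ is holomorphic and injective with open connected image $\Omega$, and writing $\lambda=a+ib$ and asking when $\mu(\lambda)\in i\R$ (equivalently $a^2-b^2=c-\rho^2$ with $0<b<\gamma_p\rho$) one finds that $\Omega$ contains a nondegenerate subinterval of $i\R$ precisely when $c-\rho^2>-\gamma_p^2\rho^2$, i.e.\ $c>c_p$. Reparametrising by $\nu=\mu(\lambda)\in\Omega$, the maps $\nu\mapsto e_{\mu^{-1}(\nu),\omega}$ ($\omega\in\sn$, of which countably many suffice by separability) are holomorphic eigenvector maps on $\Omega$, so the only substantive point is totality: if $g\in L^{p'}(\R^n,(\varphi_{i\rho})^2\,dx)$ annihilates every $e_{\lambda,\omega}$, then $h:=\overline g\,\varphi_{i\rho}$ satisfies $\int|h(x)|e^{\delta|x|}\,dx<\infty$ for all $\delta<\gamma_p\rho$ by H\"older, so $\widehat h$ is holomorphic in the tube $|\IM\zeta|<\gamma_p\rho$; the hypothesis reads $\widehat h(-\lambda\omega)=0$ for $\lambda\in S^+$, $\omega\in\sn$, and the identity theorem then forces $\widehat h\equiv0$ on $\R^n$, hence $g=0$. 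This gives hypercyclicity, and because $\Omega\cap i\R$ contains an interval and every eigenvector with purely imaginary eigenvalue is periodic, the same Hahn--Banach argument yields density of periodic points, i.e.\ chaos.

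For the ``only if'' half of (a) I would use Theorem~\ref{T1}: it suffices to show $\sigma_{pt}(B)\cap i\R$ is finite when $c\le c_p$, where $B$ generates $\tilde T_t^c$. Under $U$ this amounts to computing the point spectrum of $\Delta$ on $Y_p$ ($p>2$), which I claim equals $\{\lambda^2:|\IM\lambda|<\gamma_p\rho\}$; granting this, $\sigma_{pt}(B)=\{c-\rho^2-\lambda^2:|\IM\lambda|<\gamma_p\rho\}$, and a purely imaginary element would require $a^2-b^2=c-\rho^2$ with $|b|<\gamma_p\rho$, impossible once $c-\rho^2\le-\gamma_p^2\rho^2$; hence $\sigma_{pt}(B)\cap i\R=\emptyset$ for $c\le c_p$ and $\tilde T_t^c$ is not chaotic. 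The inclusion $\supseteq$ is the plane-wave computation above. For the reverse inclusion, suppose $0\ne u\in Y_p$ and $\Delta u=\lambda_0^2u$ with $\IM\lambda_0\ge0$; interior elliptic estimates for $\Delta-\lambda_0^2$, together with $\sup_{|y-x|\le2}(\varphi_{i\rho}(y))^{p-2}\le Ce^{(p-2)\rho|x|}$, give $|u(x)|\le Ce^{\gamma_p\rho|x|}$, while a spherical-harmonic expansion $u=\sum_k Y_k f_k$ turns the equation into Bessel-type radial ODEs whose solutions regular at the origin behave like $r^{-(n-1)/2}\big(a_k e^{i\lambda_0 r}+b_k e^{-i\lambda_0 r}\big)$ with $b_k\ne0$ whenever $f_k\ne0$, and comparing this growth with the weight $(\varphi_{i\rho})^{2-p}$ forces $\IM\lambda_0<\gamma_p\rho$. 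This sharp statement --- that an eigenfunction of the Laplacian of exponential type $\ge\gamma_p\rho$ cannot lie in $L^p(\R^n,(\varphi_{i\rho})^{2-p}\,dx)$ --- is the step where I expect the real work; the rest of the argument is formal conjugation or standard DSW machinery.

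Part (c): let $1\le p\le2$. For the absence of periodic points, if $\tilde T_s^c v=v$ with $s>0$ then $u:=v\varphi_{i\rho}$ satisfies $e^{-s\Delta}u=e^{s(\rho^2-c)}u$; since $(\varphi_{i\rho})^{2-p}$ is bounded below here, $u\in L^p(\R^n)$ and so $\widehat u\in L^{p'}(\R^n)$ by Hausdorff--Young, while $\widehat u(\xi)\big(e^{-s|\xi|^2}-e^{s(\rho^2-c)}\big)=0$ forces $\widehat u$ to be supported on a sphere or on a point, a null set; hence $\widehat u=0$ and $v=0$. For the absence of hypercyclic vectors, with respect to the pairing $\langle f,g\rangle=\int f\overline g\,(\varphi_{i\rho})^2\,dx$ one checks $(\tilde T_t^c)^*=\tilde T_t^c$, so it is enough to exhibit a nonzero eigenvector of $\tilde T_t^c$ in the dual $L^{p'}(\R^n,(\varphi_{i\rho})^2\,dx)$: for any $r_0>0$, $\varphi_{r_0}/\varphi_{i\rho}$ is such an eigenvector (eigenvalue $e^{t(c-\rho^2-r_0^2)}$), and since $|\varphi_{r_0}(x)|\le C|x|^{-(n-1)/2}$, the exponential gain from the weight puts it in $L^{p'}$ whenever $p'>2$, i.e.\ whenever $1\le p<2$; then for every $v$ the numbers $\langle\tilde T_t^c v,\varphi_{r_0}/\varphi_{i\rho}\rangle$ all lie on one ray, so the orbit cannot be dense. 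The borderline $p=2$ is handled separately: there $\tilde T_t^c$ is a positive self-adjoint $C_0$-semigroup on $L^2(\R^n)$, and $\langle\tilde T_{2t}^c v,v\rangle=\|\tilde T_t^c v\|^2\ge0$ rules out approximating $-v$, so again no orbit is dense.
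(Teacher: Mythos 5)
Your proposal reaches the correct conclusions, but by a route that differs from the paper's in several places, and the comparison is worth recording. The paper transfers everything to $T_t^c=e^{-t(\Delta+\rho^2-c)}$ on $L^p_\rho(\R^n)$ and then (i) proves the operator norm bound and strong continuity on the weighted space (Theorem~\ref{T7}), (ii) obtains chaos for $c>c_p$ from the criterion of Theorem~\ref{T11}/Corollary~\ref{C1} applied to the translated Bessel functions $\tau_x\varphi_\lambda$ spanning $\B_0,\B_\infty,\B_{Per}$ (Proposition~\ref{P1}), and (iii) for $c\le c_p$ computes the full $L^p_\rho$-spectrum via a Taylor-type multiplier theorem (Theorems~\ref{T8}--\ref{T10}) before invoking Theorem~\ref{T1}. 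You instead use the plane waves $e^{i\lambda\,\omega\cdot x}/\varphi_{i\rho}$ together with the holomorphic eigenvector-field form of the Desch--Schappacher--Webb criterion (the relevant reference in this paper's bibliography is \cite{DS}, not the key you cite), and for necessity you compute only the point spectrum by a spherical-harmonics/Bessel ODE argument. That is legitimate, since Theorem~\ref{T1} involves only the point spectrum, and it buys you a real simplification: you bypass the multiplier machinery of Theorems~\ref{T9}--\ref{T10}; in fact your point-spectrum argument is precisely the paper's Theorem~\ref{T16} specialized to $\K=0$. Your totality argument (H\"older puts $\bar g\,\varphi_{i\rho}$ in an exponentially weighted $L^1$, its Fourier transform is holomorphic in the tube $|\Im\zeta|<\gamma_p\rho$, identity theorem) is the same mechanism as the paper's density proof in Proposition~\ref{P1}. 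In part (c) your exhibition of the adjoint eigenvector $\varphi_{r_0}/\varphi_{i\rho}\in L^{p'}(\R^n,(\varphi_{i\rho})^2dx)$ and the ray argument is a self-contained version of the paper's appeal to Theorem~\ref{T12}, the Fourier-transform exclusion of periodic points is essentially identical (Hausdorff--Young in place of the $L^1$ embedding), and at $p=2$ your positivity argument $\langle \tilde T_{2t}^c v,v\rangle=\|\tilde T_t^c v\|^2\ge 0$ is a nice direct substitute for the paper's citation of \cite{PS}.

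Two points need attention before this counts as a proof. First, nowhere do you verify that $\tilde T_t^c$ (equivalently the heat semigroup on $Y_p=L^p(\R^n,(\varphi_{i\rho})^{2-p}dx)$) is a bounded, strongly continuous semigroup on the weighted space; both the DSW criterion and your duality manipulations presuppose this, and it is not automatic because the weight is exponential. You do not need the sharp constant $e^{-4\rho^2 t/(pp')}$ of Theorem~\ref{T7} (your spectral route to the ``only if'' avoids it), but the boundedness for each $t$ and continuity at $t=0$ still require the convolution estimate that Theorem~\ref{T7} supplies. Second, the step you yourself flag as ``the real work'' --- that no eigenfunction of $\Delta$ with $|\Im\lambda_0|\ge\gamma_p\rho$ lies in $Y_p$ --- is only asserted; your sketch (regular-at-origin solution is a Bessel function whose asymptotics contain a nonvanishing $e^{-i\lambda_0 r}$ component, incompatible with the weight) is exactly how Theorem~\ref{T16} is proved, so it can be completed, but as written it is the missing ingredient rather than a routine remark. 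A smaller repair: in part (b), $\tilde T_t^c$ is not strongly continuous on $L^\infty(\R^n)$, so your separability argument needs the observation that $t\mapsto \tilde T_t^c v$ is norm-continuous for $t>0$ (or use the paper's simpler point that the orbit closure lies in the proper subspace of bounded continuous functions).
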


For $ \nu > 0,  z > 0  $ let $ K_\nu(z) $ be the Macdonald
function given by the Sommerfeld integral
$$ K_\nu(z) = \frac{1}{2} \Big (\frac{z}{2}\Big )^\nu \int_0^\infty  e^{-(t+\frac{z^2}{4t})} t^{-\nu-1} dt.$$
By making a change of variables, we observe that
$$  z^\nu K_\nu(z) =  2^{-\nu-1} \int_0^\infty  e^{-(tz^2+\frac{1}{4t})} t^{-\nu-1} dt.$$
The asymptotic behavior of $ K_\nu $ and $ I_\nu(z) $ at infinity
are given by
$$ K_\nu(z)  =
\frac{\sqrt{\pi}}{\sqrt{2z}} e^{-z} (1+O(1/z)), ~~~~~ I_\nu(z) =
\frac{1}{\sqrt{2\pi z}} e^z (1+O(1/z)),$$ see page 226 in
\cite{NU}. Consequently, for $|\Im(\la)|< \gm_p\rho$ we see that
$$\int_{\R^n}|\pl(x)|^p (\tilde{K}_{n/2}( \rho |x|))^ {\gamma_p p} dx < \infty $$
where $ \tilde{K}_\nu(z) = z^\nu K_\nu(z).$ If we take $ \lambda =
\beta(1+i), |\beta| <  \gamma_p  \rho,$ then  $\D\pl=2i\bt^2\pl $
and hence we have plenty of eigenfunctions with purely imaginary
 eigenvalues which belong to the weighted $L^p$ spaces
 $L^p(\R^n, (\tilde{K}_{n/2}( \rho |x|))^ {\gamma_p p}dx)$.\\

The behaviour of the modified semigroup $ \tilde{T_t} $ on $
L^p(\R^n, (\varphi_{i\rho}(x))^2 dx) $ is equivalent to the
behaviour of $ T_t $ on the spaces $L^p(\R^n, (\tilde{K}_{n/2}(
\rho |x|))^ {\gamma_p p}dx)$ for $ p > 2.$
 Indeed, if we set $ \tilde{I}_\nu(z) = \frac{I_\nu(z)}{z^\nu},$ then
it follows from the asymptotic properties of $ I_\nu $ and $ K_\nu
$ that
\begin{equation}\label{J}
   C_1 \leq \tilde{I}_\nu(z) \tilde{K}_{\nu+1}(z) \leq C_2 ,~~~~ z \geq 0.
\end{equation}
In view of this, $L^p(\R^n, (\tilde{K}_{n/2}( \rho |x|))^
{p-2}dx)$ is the same as $L^p(\R^n, (\varphi_{i\rho}(x))^
{2-p}dx)$ whenever $ p > 2$ as $ \gamma_p p = p-2.$  It follows
that $ f \in L^p(\R^n, (\varphi_{i\rho}(x))^2 dx) $ if and only if
$ f \varphi_{i\rho} \in L^p(\R^n, (\tilde{K}_{n/2}( \rho |x|))^
{p-2}dx).$ It is
 therefore, natural to study the heat semigroup $e^{-t(\D+\rho^2)}$ on the
 weighted $L^p$ spaces $L^p(\R^n, (\tilde{K}_{n/2}( \rho |x|))^ {\gamma_p p}dx)$.
 It turns out that the chaotic behavior of
 $e^{-t(\D+\rho^2)}$ on these spaces is very similar to the behavior of
 $e^{-t\D_X}$ on $L^p(X)$, $p>2$. Indeed, we have the following theorem. \\

In what follows we write $ L^p_\rho(\R^n) $ in place of $
L^p(\R^n, (\tilde{K}_{n/2}( \rho |x|))^ {\gamma_p p}dx)$ for the
sake of notational convenience.
\begin{thm}\label{T4}
 For $1\leq p \leq \infty$ let $c_p=\rho^2(1-\gm_p^2)$
 and  for $c\in \R$ define $T_t^c=e^{-t(\D+\rho^2-c)}$  where $ \Delta $
 is the standard Laplacian on $ \R^n.$ Then
 \begin{enumerate}
    \item For $1 \leq p < \infty, p\neq 2, ~T_t^c$ is chaotic on $L^p_{\rho}(\R^n)$ if $c>c_p.$
    \item $T_t^c$ is not chaotic on $L^{\infty}_{\rho}(\R^n)$ for any $c\in \R $.
    \item For  $1\leq p < \infty$ and $c \leq c_p$, $T_t^c$ is not chaotic  on
    $L^p_{\rho}(\R^n)$ and for $ c < c_p $ it is not even hypercyclic.
 \end{enumerate}
 \end{thm}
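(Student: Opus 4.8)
The plan is to prove Theorem~\ref{T4} by combining the Desch--Schappacher--Webb criterion for chaos of strongly continuous semigroups with the explicit spectral information about $ \D $ on the weighted spaces $ L^p_\rho(\R^n) $. Recall that criterion: if $ T_t $ is generated by $ A $, and there is an open connected subset $ U $ of $ i\R $ such that for each $ z \in U $ there is a nonzero eigenfunction $ f_z \in \B $ with $ A f_z = z f_z $, and the map $ z \mapsto f_z $ is weakly holomorphic (or at least the span of $ \{ f_z : z \in U \} $ is dense), then $ T_t $ is hypercyclic and chaotic. So the heart of the matter is to produce, for $ c > c_p $, a whole band of purely imaginary eigenvalues of $ -(\D+\rho^2-c) $ realized by eigenfunctions lying in $ L^p_\rho(\R^n) $, together with a density argument; and conversely, for $ c \le c_p $, to control the $ L^p_\rho $-spectrum of $ \D $ and invoke Theorem~\ref{T1} (de Laubenfels--Emamirad) to rule out chaos, and a separate argument to rule out hypercyclicity when $ c < c_p $.

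First I would carry out the eigenfunction construction. The functions $ \varphi_\lambda $ satisfy $ \D \varphi_\lambda = \lambda^2 \varphi_\lambda $, hence $ T_t^c \varphi_\lambda = e^{-t(\lambda^2 + \rho^2 - c)} \varphi_\lambda $, so $ \varphi_\lambda $ is an eigenfunction of the generator with eigenvalue $ -(\lambda^2+\rho^2-c) $. Writing $ \lambda = a+ib $ we get $ \lambda^2 = a^2-b^2 + 2iab $, so the eigenvalue is purely imaginary exactly when $ a^2 - b^2 = c - \rho^2 $, i.e. when $ (a,b) $ lies on a hyperbola. As computed in the excerpt, $ \varphi_\lambda \in L^p_\rho(\R^n) $ precisely when $ |\Im(\lambda)| = |b| < \gamma_p \rho $. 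The condition $ a^2 = b^2 + c - \rho^2 $ can be solved with $ |b| < \gamma_p\rho $ on a nonempty open arc if and only if the infimum of $ b^2 + c - \rho^2 $ over $ |b| < \gamma_p\rho $ is negative, i.e. $ c - \rho^2 < 0 $ is not enough by itself --- one needs the eigenvalue $ 2iab $ to sweep out an interval around $ 0 $; taking $ b \to 0 $ forces $ a^2 \to c-\rho^2 $, which is admissible (real $ a $) only if $ c \ge \rho^2 $, but we may instead let $ |b| $ range in $ (0,\gamma_p\rho) $ and solve for $ a $, requiring $ b^2 > \rho^2 - c $, which together with $ b^2 < \gamma_p^2\rho^2 $ has solutions iff $ \rho^2 - c < \gamma_p^2 \rho^2 $, that is $ c > \rho^2(1-\gamma_p^2) = c_p $. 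This is exactly the threshold. For such $ c $, as $ b $ ranges over the admissible open interval and $ a = \pm\sqrt{b^2-\rho^2+c} $, the quantity $ 2ab $ fills an open interval $ (-\epsilon,\epsilon) $ (indeed an unbounded interval when $ c > \rho^2 $), giving an open arc $ U \subset i\R $ of eigenvalues. Holomorphic dependence of $ \lambda \mapsto \varphi_\lambda $ is clear from the integral formula, and the closure-of-span condition follows from injectivity of the Dunkl/Fourier transform on an interval of frequencies (a standard analyticity argument): if a functional in the dual $ L^{p'}_\rho $ annihilates all $ \varphi_\lambda $ for $ \lambda $ in an open set, its Fourier transform vanishes on an open set, and one bootstraps to conclude it vanishes identically. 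Denseness of periodic points follows in the same framework (the DSW theorem gives chaos, not merely hypercyclicity, under these hypotheses). This proves part~(1).

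For part~(2), when $ p = \infty $ the space $ L^\infty_\rho(\R^n) = L^\infty(\R^n) $ (the weight is irrelevant since $ \gamma_\infty p $ has to be read as the $ p\to\infty $ limit giving the unweighted norm, or more precisely $ \gamma_p p = p-2 $ so the weight is $ (\tilde K_{n/2})^{p-2} $ and for $ L^\infty $ there is no weight), and the semigroup $ e^{-t(\D+\rho^2-c)} $ is a bounded-below translate of the heat semigroup, whose generator $ -(\D+\rho^2-c) $ has spectrum $ (-\infty, c-\rho^2] $ on $ L^\infty(\R^n) $, a subset of $ \R $; so $ \sigma_{pt} \cap i\R $ is at most the single point $ 0 $ (attained only if $ c = \rho^2 $, and even then by constants which are not approximable to give hypercyclicity), hence by Theorem~\ref{T1} it cannot be chaotic for any $ c $. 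For part~(3), when $ c < c_p $ I would show $ T_t^c $ is not hypercyclic by exhibiting a nonzero continuous linear functional, or better a decay/growth obstruction: the $ L^p_\rho $-spectrum of $ \D $ is contained in the closed region $ \{ \lambda^2 : |\Im\lambda| \le \gamma_p\rho \} $, which is the parabola-type region $ \{ \xi + i\eta : \xi \ge -\gamma_p^2\rho^2,\ \eta^2 \le 4\gamma_p^2\rho^2(\xi+\gamma_p^2\rho^2) \} $ (this is the known $ L^p $-spectrum of the shifted Laplacian, cf. the symmetric space analogue), so the spectrum of $ -(\D+\rho^2-c) $ lies in $ \Re z \le c - \rho^2 + \gamma_p^2\rho^2 = c - c_p < 0 $; a semigroup whose generator has spectrum in a left half-plane $ \{\Re z \le -\delta\} $, $ \delta > 0 $, is uniformly exponentially stable, hence $ \|T_t^c f\| \to 0 $ for every $ f $, so no orbit can be dense --- this kills hypercyclicity. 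For the boundary case $ c = c_p $, stability is lost but $ 0 $ lies on the boundary of the spectrum and is not an eigenvalue (no $ \varphi_\lambda $ with $ |\Im\lambda| = \gamma_p\rho $ and $ \lambda^2 = c_p - \rho^2 $ lies in $ L^p_\rho $, only on the boundary of membership), so $ \sigma_{pt}\cap i\R $ is finite and Theorem~\ref{T1} again forbids chaos.

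The main obstacle I anticipate is twofold: establishing the precise $ L^p_\rho $-spectrum of $ \D $ (or at least the sharp inclusion of the point spectrum on $ i\R $), since this must be exact at the threshold $ c = c_p $ to get the "if and only if" sharpness, and handling the limiting membership $ |\Im\lambda| = \gamma_p\rho $ carefully --- the asymptotics $ |\varphi_\lambda(x)| \sim C|x|^{-(n-1)/2} e^{|\Im\lambda||x|} $ show the weighted integrand behaves like $ |x|^{-(n-1)p/2 + (n-1)(p-2)/2} = |x|^{-(n-1)} $ at the critical parameter, which is borderline non-integrable in dimension one and integrable-with-a-log issue otherwise, so the critical case genuinely requires the refined argument via Theorem~\ref{T1} rather than a direct eigenfunction count. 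The density/weak-holomorphy verification needed to upgrade hypercyclicity to chaos via DSW is routine but must be stated; I would cite the symmetric-space treatment in \cite{PS} for the parallel computation and adapt the Bessel-function estimates recorded above.
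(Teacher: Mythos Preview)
Your overall strategy---eigenfunctions plus DSW for part~(1), Theorem~\ref{T1} plus spectral control for parts~(2) and~(3)---matches the paper's, and your threshold computation $c>c_p$ is exactly right. But there is a genuine gap in the density step of part~(1), and the paper handles parts~(2) and~(3) somewhat differently from what you sketch.

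\textbf{The density gap.} You write that if a dual functional $g$ annihilates all $\varphi_\lambda$ for $\lambda$ in an open set, then ``its Fourier transform vanishes on an open set''. This is not correct: each $\varphi_\lambda$ is \emph{radial}, so
\[
\int_{\R^n} g(x)\varphi_\lambda(x)\,dx \;=\; c_n \int_0^\infty \Bigl(\int_{S^{n-1}} g(r\omega)\,d\sigma(\omega)\Bigr)\frac{J_{n/2-1}(\lambda r)}{(\lambda r)^{n/2-1}}\,r^{n-1}\,dr
\]
sees only the spherical average of $g$. Annihilating every $\varphi_\lambda$ forces the radial part of $\hat g$ to vanish, but any nonradial $g$ with mean zero on each sphere survives. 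The paper closes this gap by enlarging the eigenfunction family to the \emph{translates} $\tau_x\varphi_\lambda$, $x\in\R^n$; then $\int g(y)\tau_x\varphi_\lambda(y)\,dy=0$ for all $x$ and all $\lambda$ in an open set, by analyticity for all real $\lambda$, hence $g\ast\varphi_\lambda\equiv 0$ for every $\lambda$, and the Fourier inversion $g=\int_0^\infty g\ast\varphi_\lambda\,\lambda^{n-1}\,d\lambda$ gives $g=0$. The translates are still eigenfunctions of $\Delta$ with the same eigenvalue, so they fit into the DSW framework; you should use them.

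\textbf{Part~(2).} Your spectral argument on $L^\infty$ is delicate because $T_t^c$ is not a $C_0$-semigroup there and $L^\infty$ is not separable, so Theorem~\ref{T1} may not apply directly. The paper sidesteps this entirely: for any $f\in L^\infty$, $T_t^c f$ is continuous and bounded, so the orbit closure sits inside $C_b(\R^n)\subsetneq L^\infty(\R^n)$, killing hypercyclicity immediately.

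\textbf{Part~(3), $c<c_p$.} Your route (spectrum in a left half-plane $\Rightarrow$ exponential decay) is valid, but it needs the full spectrum computation (Theorem~\ref{T8}, which in the paper is a nontrivial multiplier theorem) \emph{plus} a spectral-bound-equals-growth-bound step (fine here since the heat semigroup is analytic, but you should say so). The paper instead uses the sharp kernel estimate of Theorem~\ref{T7},
\[
\|T_t^c f\|_{L^p_\rho}\le C(1+t)^{\frac{n-1}{2}(1+\gamma_p)}e^{-t(c_p-c)}\|f\|_{L^p_\rho},
\]
which directly shows every orbit is bounded when $c<c_p$, hence not dense. This is more elementary and self-contained than going through the spectrum. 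For the borderline $c=c_p$ both you and the paper invoke Theorem~\ref{T1} together with the spectrum result Theorem~\ref{T8}.
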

The proof of the above theorem depends on  a sharp estimate
 for the heat semigroup $ T_t $ on $ L^p_{\rho}(\R^n)$
 stated and proved in Proposition 3.4.  And
 Theorem 1.3 is an immediate consequence of  the above result.\\

 In this paper we also  work in a more general set up and study the
 chaotic dynamics of the heat semigroup generated by the Dunkl
 Laplacian $\Dk$ on $\R^n$ associated to a finite reflection group.
 Let $G$ be such a group generated by the reflections associated
 to a root system on $\R^n$. Let $\K$ be a nonnegative multiplicity
 function and $h_{\K}^2(x)$ be the associated weight function.
 Let $T_j$, $j=1,2,\cdots ,n$ be the Dunkl difference-differential
 operators and $\Dk= -\sum_{j=1}^n T_j^2$ be the Dunkl Laplacian.
 For all the required definitions we refer to Section 3. We consider
 $T_t=e^{-t(\Dk +\rho^2)}$  the heat semigroup generated by
 $ A = \Dk+\rho^2$ on the weighted $ L^p-$space
 $L^p(\R^n, (\tilde{K}_{n/2+\gamma}( \rho |x|))^ {\gamma_p p} h_{\K}^2(x)dx)$,
where $ \gamma $ is defined in terms of the multiplicity function
$ \K ,$ see Section 3. We denote the above space by
$L_{\rho,\K}^p(\R^n).$
 Note that $L^p_{\rho, 0}(\R^n) = L^p(\R^n, (\tilde{K}_{n/2}( \rho |x|))^{\gamma_p p}dx)
 $ which we have denoted by $L^p_{\rho}(\R^n)$.
 The chaotic behavior of the semigroup generated by $A$ is
 described in the following result. For $1\leq p \leq \infty$
 we denote $p'$ to be conjugate index of $p$ i.e. $1/p+1/p'
 =1$.\\

 \begin{thm}\label{T5}
Let $ A = \Delta_\K+\rho^2.$ For $1\leq p \leq \infty$, let $c_p=\rho^2(1-\gm_p^2)$
 and  for $c\in \R$ define $T_t^c=e^{-t(A-c)}.$ Then
 \begin{enumerate}
    \item For $1\leq p <\infty,~ p\neq 2 $, $T_t^c$ is chaotic on $ L_{\rho,\K}^p(\R^n)$ if and only if $c>c_p.$
    \item $T_t^c$ is not chaotic on $L_{\rho,\K}^\infty(\R^n)$ for any $c\in \R $.
    \item $T_t^c $ is not chaotic on $ L^2_{\rho,\K}(\R^n) = L^2(\R^n, h_\K(x)^2 dx) $ for any $ c \in \R^n.$
 \end{enumerate}
 \end{thm}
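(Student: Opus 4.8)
The plan is to follow the scheme used for Theorem~\ref{T4}, carrying along the reflection group $G$ and the weight $h_\K(x)^2$. Two of the three assertions are immediate. For (3): on $L^2(\R^n,h_\K(x)^2\,dx)$ the Dunkl transform conjugates $\Dk$ to multiplication by $\abs{\xi}^2$, so $\Dk$ is a non-negative self-adjoint operator with empty point spectrum; hence $\si_{pt}(A-c)\cap i\R=\emptyset$ is finite and Theorem~\ref{T1} shows $T_t^c$ is not chaotic for any $c$. For (2): the Banach space $L^\infty_{\rho,\K}(\R^n)$ is non-separable, and a Banach space admitting a hypercyclic $C_0$-semigroup must be separable; hence $T_t^c$ is not hypercyclic on $L^\infty_{\rho,\K}(\R^n)$, in particular not chaotic, for every $c\in\R$.

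The substance is part (1); fix $1\le p<\infty$ with $p\neq2$. \textbf{Sufficiency} ($c>c_p\Rightarrow$ chaotic). For $\om\in\sn$ and $\la\in\C$ put $\psi_{\la,\om}(x)=E_\K(x,i\la\om)$, where $E_\K$ denotes the Dunkl kernel; when $\K=0$ this is $e^{i\la\langle x,\om\rangle}$. Since $T_j^xE_\K(x,i\la\om)=i\la\om_j\,E_\K(x,i\la\om)$ and $\abs{\om}=1$, one gets $\Dk\psi_{\la,\om}=\la^2\psi_{\la,\om}$, and a standard computation with the (Gaussian-decaying) Dunkl heat kernel gives $T_t^c\psi_{\la,\om}=e^{t(c-\rho^2-\la^2)}\psi_{\la,\om}$. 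From R\"osler's bound $\abs{E_\K(x,i\la\om)}\le C\,e^{\abs{\Im\la}\,\abs{x}}$, with $C$ depending only on $G$, together with the exponential decay of the weight $(\tilde K_{n/2+\gm}(\rho\abs{x}))^{\gm_p p}h_\K(x)^2$ read off using \eqref{J}, one checks exactly as in the Introduction that $\psi_{\la,\om}\in L^p_{\rho,\K}(\R^n)$ whenever $\abs{\Im\la}<\gm_p\rho$, and that $\la\mapsto\psi_{\la,\om}$ is holomorphic from $S_p=\set{\la\in\C:\abs{\Im\la}<\gm_p\rho}$ into $L^p_{\rho,\K}(\R^n)$. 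Thus each $\psi_{\la,\om}$, $\la\in S_p$, is an eigenvector of the generator of $T_t^c$ with eigenvalue $c-\rho^2-\la^2$, and a short computation shows
\[
  \set{\,c-\rho^2-\la^2:\la\in S_p\,}=\set{\,\xi+i\eta:\ \xi<(c-c_p)-\tfrac{\eta^2}{4\gm_p^2\rho^2}\,}.
\]
This set is open and connected, and when $c>c_p$ it meets $i\R$ in a nontrivial segment and has nonempty intersection with each of the half-planes $\set{\Re>0}$ and $\set{\Re<0}$. Moreover, for every nonempty open $U\subseteq S_p$ the span of $\set{\psi_{\la,\om}:\la\in U,\ \om\in\sn}$ is dense in $L^p_{\rho,\K}(\R^n)$: a functional $g$ in the dual annihilating all of them must, for each fixed $\om$, annihilate $\psi_{\la,\om}$ for every $\la\in S_p$ (holomorphy and the identity theorem), in particular for real $\la$, whence the Dunkl transform of $g\cdot(\tilde K_{n/2+\gm}(\rho\abs{\cdot}))^{\gm_p p}$ vanishes identically and $g=0$. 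Therefore the Desch--Schappacher--Webb criterion (\cite{DS}) applies: the eigenvectors with eigenvalue in $\set{\Re>0}$ and those with eigenvalue in $\set{\Re<0}$ each span a dense subspace, so $T_t^c$ is hypercyclic, while the eigenvectors $\psi_{\la,\om}$ with $c-\rho^2-\la^2\in2\pi i\,\mathbb Q$ are periodic points whose span is again dense (the same argument, now along the real-analytic arc $\set{(\Im\la)^2-(\Re\la)^2=\rho^2-c}\subseteq S_p$). Hence $T_t^c$ is chaotic whenever $c>c_p$.

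\textbf{Necessity} ($c\le c_p\Rightarrow$ not chaotic). Here one uses the spectral fact that, for the action on $L^p_{\rho,\K}(\R^n)$, $\si_{pt}(\Dk)$ is contained in the closed parabolic region $\set{w:\Re w\ge\tfrac{(\Im w)^2}{4\gm_p^2\rho^2}-\gm_p^2\rho^2}$, which follows by combining real-analyticity of $\Dk$-eigenfunctions with the admissible growth rates of such eigenfunctions against the decay built into the weight. If $c\le c_p$ then the vertical line $\set{\Re w=c-\rho^2}$ meets this closed region in at most one point, so $\si_{pt}(A-c)\cap i\R$ is finite and Theorem~\ref{T1} shows $T_t^c$ is not chaotic. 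If moreover $c<c_p$, the sharp semigroup estimate of Proposition~3.4 gives $\norm{T_t^c}_{L^p_{\rho,\K}\to L^p_{\rho,\K}}\le C\,e^{(c-c_p)t}\to0$, so all orbits tend to $0$ and $T_t^c$ is not even hypercyclic. Together with Sufficiency this proves part (1), and hence the theorem.

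The analytic heart --- and the main obstacle --- is the $L^p_{\rho,\K}$-theory of the eigenfunctions $\psi_{\la,\om}$: one needs the R\"osler-type exponential bound for the Dunkl kernel, the $L^p_{\rho,\K}$-valued holomorphy of $\la\mapsto\psi_{\la,\om}$, the identity $e^{-t\Dk}\psi_{\la,\om}=e^{-t\la^2}\psi_{\la,\om}$ on these weighted spaces, and injectivity of the Dunkl transform for the density step. A second delicate point is the point-spectrum containment used in the Necessity part, where one must exclude eigenfunctions of $\Dk$ in $L^p_{\rho,\K}(\R^n)$ with spectral parameter outside the parabolic region; this is where most of the work beyond the $\K=0$ case of Theorem~\ref{T4} is concentrated.
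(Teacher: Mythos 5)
Your sufficiency argument for part (1) is essentially the paper's: the Desch--Schappacher--Webb criterion (Theorem \ref{T11}, Corollary \ref{C1}) applied to spans of eigenfunctions indexed by the three regions of the strip, with density proved by duality, holomorphy in $\la$ and the identity theorem. Using the Dunkl plane waves $E_\K(x,i\la\om)$ instead of the Dunkl translates $\tau_x\varphi_{\la,\K}$ of Proposition \ref{P6} is a legitimate variant, and finishing with injectivity of the Dunkl transform on $L^1(h_\K^2dx)$ is equivalent to the paper's use of the strong continuity of $e^{-t\Dk}$ there. Note, however, that the criterion also needs $T_t$ to be strongly continuous on $L^p_{\rho,\K}(\R^n)$, which you never address; in the paper this is the content of Theorem \ref{T13}.

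The genuine gap is in the necessity half of (1). Everything hinges on the containment of $\si_{pt}(\Dk)$ on $L^p_{\rho,\K}(\R^n)$ in the parabolic region, and you only assert it, offering ``real-analyticity of eigenfunctions combined with admissible growth rates'' as justification and then conceding that this is where the real work lies. That is not an argument: a priori an eigenfunction with spectral parameter $|\Im(\la)|>\gm_p\rho$ could exhibit cancellations and still lie in the weighted space, so no pointwise growth heuristic settles it. The paper proves exactly this statement as Theorem \ref{T16}: expand the eigenfunction in spherical h-harmonics, show each radial coefficient $f_{m,j}$ solves a Bessel equation of order $m+N/2-1$, discard the second-kind solution $Y_{m+N/2-1}$ by local integrability at the origin, and then play the precise asymptotics of $J_{m+N/2-1}(\la r)$ against the weight $(\tilde{K}_{N/2}(\rho r))^{p\gm_p}$ to force $|\Im(\la)|<\gm_p\rho$. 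Without this (or an equivalent) analysis the ``only if'' direction is unproven. Two smaller points: your claim that for $c<c_p$ the orbits tend to zero via the ``sharp semigroup estimate of Proposition 3.4'' is not available in the Dunkl setting --- that estimate (Theorem \ref{T7}) is proved only for $\K=0$, while on $L^p_{\rho,\K}(\R^n)$ the paper has only the weaker bounds of Theorem \ref{T13}, which is precisely why Theorem \ref{T5.1} and Remark \ref{R1} exist; this claim is superfluous for Theorem \ref{T5} but wrong as stated. And in (2) your separability argument tacitly treats $T_t^c$ as a $C_0$-semigroup on $L^\infty_{\rho,\K}(\R^n)$, where strong continuity fails at $t=0$; it can be repaired (the orbit is still separable since $t\mapsto T_t^cf$ is norm continuous for $t>0$), but the paper's simpler route is that every $T_t^cf$ is bounded and continuous, so the orbit closure lies in a proper closed subspace.
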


For $ c \leq c_p, $ we would like to know  which property of  $ T_t^c $ fails. The next theorem partially answers this question.

 \begin{thm}\label{T5.1}
With same notations as in the previous theorem we have the following results.
 \begin{enumerate}

    \item For  $1\leq p < 2$   and $c< \frac{2\rho^2}{p'}$,
     $T_t^c$ is not hypercyclic on $L^p_{\rho,\K}(\R^n).$
    \item For  $p>2$ and
  $c< \frac{2\rho^2}{p}$, $T_t^c$ is not hypercyclic on
    $L^p_{\rho,\K}(\R^n).$
 \end{enumerate}
 \end{thm}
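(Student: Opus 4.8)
The plan is to establish, for $c$ below the stated bound, that $\|T_t^c\|_{L^p_{\rho,\K}(\R^n)\to L^p_{\rho,\K}(\R^n)}\to 0$ as $t\to\infty$: indeed then $M:=\sup_{t\ge0}\|T_t^c\|_{op}<\infty$, so every orbit $\{T_t^c f:t\ge0\}$ lies in the ball of radius $M\|f\|$ and is therefore not dense, and no $f$ is hypercyclic. (The usual route of producing an eigenfunctional of $(T_t^c)^*$ is not available here: $\Delta_\K$ has no exponentially decaying global eigenfunctions, so $(\Delta_\K+\rho^2)^*$ has no eigenvectors on the dual of $L^p_{\rho,\K}(\R^n)$, whatever $c$ is.) Since $T_t^c=e^{ct}T_t$ with $T_t=e^{-t(\Delta_\K+\rho^2)}$, it suffices to prove
\[
\|T_t\|_{L^p_{\rho,\K}(\R^n)\to L^p_{\rho,\K}(\R^n)}\le C_p\,e^{-2\rho^2 t/\max(p,p')},
\]
which gives $\|T_t^c\|\to0$ for $c<2\rho^2/\max(p,p')$, i.e. for $c<2\rho^2/p'$ when $1\le p<2$ and for $c<2\rho^2/p$ when $p>2$.

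The device is a ground-state transform. Let $\psi_\rho$ be the Dunkl analogue of the function $\varphi_{i\rho}$ of the Introduction: the positive $G$-invariant generalized Bessel function with $(\Delta_\K+\rho^2)\psi_\rho=0$; it satisfies $\psi_\rho(x)\asymp\tilde K_{n/2+\gamma}(\rho|x|)^{-1}$ (cf. \eqref{J}, and $\varphi_{i\rho}=c_n\tilde{I}_{n/2-1}(\rho|\cdot|)$ in the Euclidean case) together with the gradient bound $|\N\psi_\rho|\le\rho\,\psi_\rho$. Because $(\Delta_\K+\rho^2)\psi_\rho=0$, the heat semigroup reproduces $\psi_\rho$, i.e. $T_t\psi_\rho=\psi_\rho$ (as in the Euclidean identity $e^{-t(\Delta+\rho^2)}\varphi_{i\rho}=\varphi_{i\rho}$), so the similarity $f\mapsto f/\psi_\rho$ conjugates $T_t$ to $\widetilde{T}_t:=\psi_\rho^{-1}T_t(\psi_\rho\,\cdot)$. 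Using positivity of the Dunkl heat kernel (Rösler), $\widetilde{T}_t$ is a conservative symmetric Markov semigroup: its kernel $e^{-t\rho^2}\psi_\rho(x)^{-1}p_t^\K(x,y)\psi_\rho(y)$ is non-negative, integrates to $1$ against $h_\K^2\,dy$ (that is $\widetilde{T}_t 1=1$), and is symmetric with respect to $d\mu:=\psi_\rho^2 h_\K^2\,dx$ because $p_t^\K$ is. Moreover, by \eqref{J} and $\gamma_p p=|p-2|$, one has $\|f\|_{L^p_{\rho,\K}(\R^n)}\asymp\|f/\psi_\rho\|_{L^p(\psi_\rho^{\,p-|p-2|}h_\K^2\,dx)}$, and the weight $\psi_\rho^{\,p-|p-2|}h_\K^2$ equals $d\mu$ when $p\ge2$ and $\psi_\rho^{2p-4}\,d\mu$ when $p\le2$. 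Thus, up to $p$-dependent constants, estimating $\|T_t\|$ on $L^p_{\rho,\K}(\R^n)$ reduces to estimating $\widetilde{T}_t$ on $L^p(d\mu)$ (for $p\ge2$) or on $L^p(\psi_\rho^{2p-4}\,d\mu)$ (for $p\le2$).

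Now estimate $\widetilde{T}_t$. On $L^2(d\mu)$ the unitary $g\mapsto\psi_\rho g$ intertwines $\widetilde{T}_t$ with $T_t=e^{-t(\Delta_\K+\rho^2)}$ on $L^2(\R^n,h_\K^2\,dx)$; since $\Delta_\K\ge0$ there, $\|\widetilde{T}_t\|_{L^2(d\mu)}=e^{-\rho^2 t}$. On $L^\infty(d\mu)=L^\infty(\R^n)$, $\widetilde{T}_t$ is a contraction (it is Markov). For $p\ge2$, Riesz--Thorin interpolation on the $d\mu$-scale gives $\|\widetilde{T}_t\|_{L^p(d\mu)}\le(e^{-\rho^2 t})^{2/p}=e^{-2\rho^2 t/p}$. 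For $p\le2$ we use instead that $\psi_\rho^{-1}$ is a positive supersolution of $\Delta_\K+\rho^2$: by the Leibniz rule (valid since $\Delta_\K$ acts on $G$-invariant functions as a Laplace-type operator), $(\Delta_\K+\rho^2)\psi_\rho^{-1}=2\psi_\rho^{-1}\bigl(\rho^2-|\N\log\psi_\rho|^2\bigr)\ge0$ by the gradient bound, whence $\tfrac{d}{ds}\,T_s\psi_\rho^{-1}=-T_s\bigl[(\Delta_\K+\rho^2)\psi_\rho^{-1}\bigr]\le0$ (as $T_s$ is positivity-preserving) and so $T_t\psi_\rho^{-1}\le\psi_\rho^{-1}$. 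Unwinding the similarity (together with the symmetry of $p_t^\K$) this gives $\|\widetilde{T}_t\|_{L^1(\R^n,h_\K^2\,dx)}\le1$; since $L^1(\R^n,h_\K^2\,dx)=L^1(\psi_\rho^{-2}\,d\mu)$, Stein--Weiss interpolation between $L^2(d\mu)$ and $L^1(\psi_\rho^{-2}\,d\mu)$ yields $\|\widetilde{T}_t\|_{L^p(\psi_\rho^{2p-4}d\mu)}\le(e^{-\rho^2 t})^{2/p'}=e^{-2\rho^2 t/p'}$. In both cases $\|T_t\|_{L^p_{\rho,\K}(\R^n)}\le C_p\,e^{-2\rho^2 t/\max(p,p')}$, and the theorem follows as in the first paragraph.

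The delicate ingredients, all from Section 3 and standard Dunkl theory, are the existence of $\psi_\rho$ with the estimates $\psi_\rho\asymp\tilde K_{n/2+\gamma}(\rho|\cdot|)^{-1}$ and $|\N\psi_\rho|\le\rho\,\psi_\rho$, the reproducing property $T_t\psi_\rho=\psi_\rho$ for this non-$L^2$ eigenfunction, positivity of the Dunkl heat kernel, and the supersolution estimate $T_t\psi_\rho^{-1}\le\psi_\rho^{-1}$ used for $p\le2$. The method is intrinsically non-sharp: at the $L^\infty$ (resp. $L^1$) endpoint $\widetilde{T}_t$ is merely a contraction and no exponential decay is available, so interpolation produces the exponent $2\rho^2/\max(p,p')$ rather than the critical $c_p=\rho^2(1-\gm_p^2)$ of Theorem \ref{T5}; note $2\rho^2/\max(p,p')<c_p$ for all $p\ne2$. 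Reaching all of $c<c_p$ would need a genuinely $L^p$-adapted heat estimate for $\widetilde{T}_t$ — the Dunkl analogue of the sharp estimate Proposition 3.4 available in the non-Dunkl case — which is exactly why the range $2\rho^2/\max(p,p')\le c\le c_p$ is left open.
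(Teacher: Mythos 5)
Your proof is correct in substance and reaches the same conclusion by the same final mechanism as the paper: exhibit exponential decay of $\|T_t\|$ on $L^p_{\rho,\K}(\R^n)$ with rate $2\rho^2/\max(p,p')$, conclude that for $c$ below this rate the orbits $\{T_t^c f\}$ are bounded, hence not dense, hence no hypercyclic vector. The difference is in how the norm estimate is obtained. The paper simply invokes its Theorem \ref{T13}, which is proved by direct heat-kernel estimates: an explicit $L^1$ bound with weights $w_{\rho,\K}^{\pm 1}$ using the Poisson integral representation of the Bessel function and the $L^1$-isometry of the generalized translation, then Stein--Weiss interpolation with change of measure and duality for $p>2$. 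You instead re-derive an equivalent (in fact slightly cleaner, with no polynomial factor $t^{\frac{N-1}{2}\gamma_p}$) estimate by a ground-state (Doob) transform: conjugating by $\psi_\rho=\varphi_{i\rho,\K}$ turns $T_t$ into a symmetric Markov semigroup, giving the $L^\infty$ contraction and the exact $L^2$ decay $e^{-\rho^2 t}$, with Riesz--Thorin for $p\ge 2$ and, for $p\le 2$, an $L^1$ contraction coming from the supersolution inequality $T_t\psi_\rho^{-1}\le\psi_\rho^{-1}$ followed by Stein--Weiss; your bookkeeping of the weights ($\gamma_p p=|p-2|$, the identification of $L^p_{\rho,\K}$ with $L^p(\psi_\rho^{p-2\mp\,\cdot}\,h_\K^2dx)$ after division by $\psi_\rho$) is consistent. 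What your route buys is a conceptual explanation of the exponents $2\rho^2/p'$ and $2\rho^2/p$ (they are forced by the mere contractivity at the $L^1/L^\infty$ endpoint), which also explains, exactly as in the paper's Remark \ref{R1}, why the range $2\rho^2/\max(p,p')\le c\le c_p$ is left open. What it costs is a handful of auxiliary facts you should justify rather than assert: the reproducing identity $T_t\varphi_{i\rho,\K}=\varphi_{i\rho,\K}$ (it follows from R\"osler's formula $H_tE_\K(\cdot,z)=e^{t|z|^2}E_\K(\cdot,z)$ for real $z$, integrated over the sphere), the gradient bound (which reduces to $\frac{d}{dz}\bigl(z^{-\nu}I_\nu(z)\bigr)=z^{-\nu}I_{\nu+1}(z)\le z^{-\nu}I_\nu(z)$), the fact that $\Delta_\K$ acts on radial functions as the Bessel operator so your Leibniz computation of $(\Delta_\K+\rho^2)\psi_\rho^{-1}$ is legitimate, and the domain/regularity point needed to write $\frac{d}{ds}T_s\psi_\rho^{-1}=-T_s\bigl[(\Delta_\K+\rho^2)\psi_\rho^{-1}\bigr]$ (harmless here, since $\psi_\rho^{-1}$ is smooth with exponential decay). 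None of these is a gap in the mathematics, but they are the places where a referee would ask for a line of proof or a citation.
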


 \begin{rem}\label{R1}
    When $1\leq p < 2$ (when  $p>2$) and $\frac{2\rho^2}{p'}\leq c \leq c_p $
    (resp. when  $\frac{2\rho^2}{p}\leq c \leq c_p$), we don't know if $T_t^c$ fails to be hypercyclic or not.
    Also we are not able to say anything about the periodicity. This is due to the fact that we do not have
    sharp estimates on the operator norm of $ T_t $ on $ L^p_{\rho,\K}(\R^n).$
    On the other hand when $ \K =0 $ we do have better estimates
    for the operator norm of $ T_t $ and hence we have a complete picture.
 \end{rem}

 An examination of the proof of the sharp estimate for
 $ T_t =e^{-t(\D+\rho^2)}$ in Theorem \ref{T7} reveals that
 we need to use the boundedness of translation operators on
 $ \R^n$ on weighted $ L^p $ spaces. If we want to prove an
 analogue of Theorem \ref{T7} for the Dunkl Laplacian, then
 we need to know the boundedness properties of Dunkl translation
 on the spaces $ L_{\rho,\K}^p(\R^n).$ Unfortunately,
 the boundedness properties of these operators are not even
 known on $ L^p $ spaces, see \cite{TX} for some results.\\

On the other hand, instead of $ L_{\rho,\K}^p(\R^n)$ for $1\leq p
< \infty, $ if we consider the mixed norm spaces $
L_{\rho,\K}^{p,2}(\R^n),$ then we can improve the estimates. These
are defined as the space of all functions $ f $ for which
$$ \int_0^{\infty}
\left( \int_{\sn}|f(r\om)|^2h_{\K}^2(\om)d\si(\om) \right)
 ^{\frac{p}{2}}(\tilde{K}_{n/2+\gamma}(\rho r))^{p \gamma_p}r^{n+2\gm-1}dr  < \infty .$$
The $p$-th root of the above quantity will be denoted by $
\|f\|_{L_{\rho,\K}^{p,2}(\R^n)}.$ On this space  we have better 
estimates for  the Dunkl heat semigroup, see Theorem \ref{T14}.
Consequently, we can prove the following result.\\

 \begin{thm}\label{T6}
 For $1\leq p < \infty$ let $c_p=\rho^2(1-\gm_p^2)$
 and   for $c\in \R$ define $T_t^c =e^{-t(A-c)}.$  Then
 \begin{enumerate}
    \item For $1\leq  p <\infty,~ p\neq 2 $, $T_t^c$ is chaotic on $L^{p,2}_{\rho,\K}(\R^n)$ if $c>c_p.$
    \item For  $1\leq p < \infty$ and $c< c_p$, $T_t^c$ is not hypercyclic on
    $L^{p,2}_{\rho,\K}(\R^n)$ and hence not chaotic.
 \end{enumerate}
 \end{thm}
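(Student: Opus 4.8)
The plan is to mirror the arguments that gave Theorems \ref{T4} and \ref{T5}, replacing the $L^p_{\rho,\K}$-estimates for $T_t$ by the sharper estimates available on the mixed-norm spaces $L^{p,2}_{\rho,\K}(\R^n)$ (Theorem \ref{T14}). The key point is that the mixed-norm space only sees the $L^2$-average of $f$ over each sphere against $h_\K^2$, and since the Dunkl heat kernel is radial in the sense that it interacts well with the spherical $h_\K^2$-harmonic decomposition, the operator norm of $T_t$ on $L^{p,2}_{\rho,\K}$ is controlled purely by a one-dimensional (Bessel-type) calculation. This removes the obstruction noted in Remark \ref{R1}, namely the lack of $L^p$-boundedness of Dunkl translations, because on $L^{p,2}$ we never need to translate a genuinely $n$-dimensional function.

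For part (1), assume $c>c_p$ and $p\neq 2$. First I would exhibit eigenfunctions of $A-c$ with purely imaginary eigenvalues lying in $L^{p,2}_{\rho,\K}(\R^n)$: taking $\la=\beta(1+i)$ with $|\beta|$ in a suitable range determined by $c>c_p=\rho^2(1-\gm_p^2)$, the generalized Bessel functions $\varphi_\la$ (the $\K$-analogue) satisfy $\Dk\varphi_\la=\la^2\varphi_\la$, hence $(A-c)\varphi_\la=(\la^2+\rho^2-c)\varphi_\la$ with $\la^2=2i\beta^2$ purely imaginary precisely when $\rho^2-c=-\beta^2$; the decay estimate $|\varphi_\la(x)|\lesssim C_\la|x|^{-(n/2+\gamma-1/2)}e^{|\Im\la||x|}$ together with the weight $(\tilde K_{n/2+\gamma}(\rho r))^{p\gm_p}$ and the condition $|\Im\la|<\gm_p\rho$ places these functions in $L^{p,2}_{\rho,\K}(\R^n)$. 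Then I would invoke the Desch--Schappacher--Webb / Godefroy--Shapiro sufficient criterion (as used in \cite{PS}): since the imaginary eigenvalues $\{2i\beta^2\}$ form an interval (hence have an accumulation point and, together with the analytic dependence of $\varphi_\la$ on $\la$, span a dense subspace), the span of the eigenfunctions with eigenvalue in the open left half-plane is dense, as is the span of those in the open right half-plane, and the span of those with imaginary eigenvalue; the criterion then yields that $T_t^c$ is both hypercyclic and has dense periodic points, i.e. chaotic. The density of these spans in the mixed-norm space is where one must be slightly careful — I would verify it by a duality/Hahn--Banach argument, testing against an annihilating functional in $(L^{p,2}_{\rho,\K})^*$ and using that $\la\mapsto\varphi_\la$ together with the Dunkl transform inversion forces the functional to vanish.

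For part (2), assume $c<c_p$; I want to show $T_t^c$ is not hypercyclic. Here I would use the sharp upper estimate for the operator norm of $T_t$ on $L^{p,2}_{\rho,\K}(\R^n)$ from Theorem \ref{T14}: it should give $\|T_t^c\|_{L^{p,2}_{\rho,\K}\to L^{p,2}_{\rho,\K}}\lesssim e^{(c-c_p)t}\,(\text{polynomial in }t)$, which tends to $0$ as $t\to\infty$ when $c<c_p$. A uniformly bounded (indeed decaying) semigroup cannot be hypercyclic, since the orbit of any vector stays in a bounded set, so no orbit can be dense in the infinite-dimensional Banach space $L^{p,2}_{\rho,\K}(\R^n)$; non-chaoticity follows a fortiori. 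The main obstacle in the whole argument is really the density statement in part (1) and the precise form of the norm estimate feeding part (2) — both hinge on Theorem \ref{T14} being genuinely sharp (matching lower and upper bounds of the form $e^{(c-c_p)t}$ up to polynomial factors), so the bulk of the work is in establishing and then correctly applying that proposition rather than in the ergodic-theoretic machinery, which is by now standard.
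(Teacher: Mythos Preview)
Your treatment of part~(2) is correct and coincides with the paper's: Theorem~\ref{T14} gives
\[
\|T_t^c f\|_{L^{p,2}_{\rho,\K}} \leq C(1+t)^{\frac{n+2\gm-1}{2}(1+\gm_p)} e^{(c-c_p)t}\|f\|_{L^{p,2}_{\rho,\K}},
\]
so for $c<c_p$ the operators $T_t^c$ are uniformly bounded in $t$ and every orbit is bounded, hence non-dense. (Only the upper bound is needed here; no matching lower bound is required.)

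Part~(1) has a genuine gap in the density step. The generalized Bessel functions $\varphi_{\la,\K}$ you propose are all \emph{radial}; their linear span is contained in the closed subspace of radial functions in $L^{p,2}_{\rho,\K}(\R^n)$, which is strictly smaller than the whole space (any $g(r)Y(\om)$ with $Y$ an $h$-harmonic of degree $m\geq 1$ lies outside it). Consequently a Hahn--Banach argument using only the $\varphi_{\la,\K}$ can at best show that an annihilating functional has vanishing zeroth $h$-harmonic coefficient, not that it vanishes identically, so the Desch--Schappacher--Webb criterion cannot be verified this way.

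The paper fixes this by working with the Dunkl-translated spherical functions
\[
\tau_x\varphi_{\la,\K}(y)=\int_{S^{n-1}} E_\K(ix,\la\om)\,E_\K(iy,\la\om)\,h_\K^2(\om)\,d\si(\om),
\]
letting both $x\in\R^n$ and $\la$ vary. These are still eigenfunctions of $\Dk$ with eigenvalue $\la^2$, the bound $|E_\K(ix,\la\om)|\leq e^{|\Im\la|\,|x|}$ places them in $L^{p,2}_{\rho,\K}(\R^n)$ for $|\Im\la|<\gm_p\rho$, and now if $g$ in the dual annihilates $\tau_x\varphi_{\la,\K}$ for all $x$ and all $\la$ in an open subset of $\Lambda_p^0$, holomorphic continuation extends this to all real $\la$, and the identity
\[
e^{-t\Dk}g(x)=c_N\int_0^\infty e^{-t\la^2}\Big(\int_{\R^n}\tau_{-x}\varphi_{\la,\K}(y)\,g(y)\,h_\K^2(y)\,dy\Big)\la^{n+2\gm-1}\,d\la
\]
forces $e^{-t\Dk}g=0$ for all $t>0$, whence $g=0$ by strong continuity. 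This is the content of Proposition~\ref{P6} and Remark~\ref{R2}, and is the piece your outline is missing.
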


 A comparison of these theorems with the results of \cite{PS} (see Theorems
 1.2, 1.3 and 1.4) shows the similarity between the behavior of
 $e^{-t(A-c)}$ on $L^p_{\rho,\K}(\R^n) $ and $e^{-t(\D_X-c)}$ on $L^p(X)$.
 It is also interesting to compare our results to the unweighted
 case of the heat semigroup $e^{-t(-\D-c)}$ on $L^p(\R^n) $
 stated and proved in Section 9 of \cite{PS}.

%--------------------------------------------------------------------------------------------

\section{Chaotic behavior of the heat semigroup\\ on weighted $ L^p$ spaces}
\subsection{ The heat semigroup on $L^p_\rho(\R^n)$:} In this subsection
we will estimate the operator norm of $ T_t $ acting on the weighted space
$ L^p_\rho(\R^n).$ In proving the following result we will make use of the
fact that for $ 1< p <\infty,$ the dual of $ L^p_\rho(\R^n) $ can be identified with
$ L^{p'}(\R^n, (\varphi_{i\rho}(x))^{p' \gamma_{p'}} dx)$ if the duality bracket is taken as
$$ (f,g) = \int_{\R^n} f(x) g(x) dx,$$
for $ f \in L^p_\rho(\R^n), g \in L^{p'}(\R^n,
(\varphi_{i\rho}(x))^{p' \gamma_{p'}} dx).$ This follows from the
estimates \eqref{J}. When $ p =1 $ the dual of $ L^1_\rho(\R^n) $
is taken as $ L^\infty(\R^n) $ and we use the standard duality
bracket
$$ (f,g) = \int_{\R^n} f(x) g(x)  \tilde{K}_{n/2}(x) dx.$$

 \begin{thm}\label{T7}
 Let $T_t$ be the semigroup generated by $\D+\rho^2.$ Then for any
 $1\leq p < \infty$ it is strongly continuous on $ L^p_\rho(\R^n).$
Moreover, we have the estimate
\begin{eqnarray*}
   \|T_tf(x)\|_{L^p_\rho(\R^n)}\leq C (1+t)^{\frac{n-1}{2}(1+\gm_p)} e^{-\frac{4\rho^2}{p p'}t}
 \|f(x)\|_{L^p_\rho(\R^n)}
 \end{eqnarray*}
for all $ f \in L^p_\rho(\R^n)$ and $1 \leq p < \infty .$
 \end{thm}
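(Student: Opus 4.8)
The plan is to use the factorization $T_t = e^{-\rho^2 t}\,e^{-t\D}$, where $e^{-t\D}$ is convolution with the Gauss kernel $p_t(x)=(4\pi t)^{-n/2}e^{-|x|^2/(4t)}$. For $f\in L^p_\rho(\R^n)$ the Gaussian decay of $p_t$ dominates the exponential growth of the reciprocal weight, so $p_t*f$ is well defined a.e.; writing $e^{-t\D}f=\int_{\R^n}p_t(y)\,\tau_y f\,dy$ with $\tau_y f(x)=f(x-y)$, and using $p_t\ge 0$, Minkowski's integral inequality gives
\begin{equation*}
\norm{T_t f}_{L^p_\rho(\R^n)}\;\le\; e^{-\rho^2 t}\int_{\R^n}p_t(y)\,\norm{\tau_y f}_{L^p_\rho(\R^n)}\,dy .
\end{equation*}
Hence it suffices to bound the operator norm of the translation $\tau_y$ on $L^p_\rho(\R^n)$ and then to evaluate the resulting Gaussian integral in $y$. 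This is precisely the point at which boundedness of translations on weighted $L^p$ spaces enters.

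For the translation norm, the substitution $u=x-y$ gives $\norm{\tau_y f}_{L^p_\rho(\R^n)}^p=\int_{\R^n}|f(u)|^p\,\tilde{K}_{n/2}(\rho|u+y|)^{\gm_p p}\,du$, so that $\norm{\tau_y}_{L^p_\rho\to L^p_\rho}=\big(\sup_{u}\tilde{K}_{n/2}(\rho|u+y|)/\tilde{K}_{n/2}(\rho|u|)\big)^{\gm_p}$. From the asymptotics of $K_{n/2}$ recalled above, together with the continuity and strict positivity of $K_{n/2}$ on $(0,\infty)$ and the fact that $z^{n/2}K_{n/2}(z)$ has a positive limit as $z\to 0^+$, one gets the two-sided comparison $\tilde{K}_{n/2}(z)\asymp(1+z)^{(n-1)/2}e^{-z}$ for all $z\ge 0$. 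Inserting this, and using $1+\rho|u+y|\le(1+\rho|u|)(1+\rho|y|)$ together with $|u|-|u+y|\le|y|$, yields
\begin{equation*}
\norm{\tau_y}_{L^p_\rho\to L^p_\rho}\;\le\; C\,(1+|y|)^{\gm_p(n-1)/2}\,e^{\gm_p\rho|y|} .
\end{equation*}

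It remains to estimate $\int_{\R^n}p_t(y)\,(1+|y|)^{\gm_p(n-1)/2}e^{\gm_p\rho|y|}\,dy$. Passing to polar coordinates and completing the square, $-r^2/(4t)+\gm_p\rho r=-(r-2t\gm_p\rho)^2/(4t)+\gm_p^2\rho^2 t$, extracts the factor $e^{\gm_p^2\rho^2 t}$ and reduces matters to
\begin{equation*}
t^{-n/2}\int_0^\infty e^{-(r-2t\gm_p\rho)^2/(4t)}\,(1+r)^{\gm_p(n-1)/2}\,r^{n-1}\,dr\;\le\;C\,(1+t)^{\frac{n-1}{2}(1+\gm_p)} .
\end{equation*}
For $t\le 1$ the integrand is concentrated at scale $r\lesssim\sqrt t$ and the left side is $\lesssim 1$; for $t\ge 1$ it concentrates at $r\asymp t$ with width $\asymp\sqrt t$, so the left side is $\asymp t^{-n/2}\cdot t^{\gm_p(n-1)/2}\cdot t^{\,n-1}\cdot t^{1/2}=t^{\frac{n-1}{2}(1+\gm_p)}$. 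Combining the two displayed estimates with the prefactor $e^{-\rho^2 t}$ and the identity $\rho^2(1-\gm_p^2)=4\rho^2/(pp')$, valid for every $1\le p<\infty$, produces exactly $\norm{T_t f}_{L^p_\rho(\R^n)}\le C(1+t)^{\frac{n-1}{2}(1+\gm_p)}e^{-4\rho^2 t/(pp')}\norm{f}_{L^p_\rho(\R^n)}$. Pinning down the precise polynomial power $\frac{n-1}{2}(1+\gm_p)$ in this last Gaussian integral is the \emph{main computational point} of the proof; the translation bound and the reduction above are routine.

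Finally, the estimate just obtained gives $\sup_{0<t\le 1}\norm{T_t}_{L^p_\rho\to L^p_\rho}<\infty$. Since the weight $\tilde{K}_{n/2}(\rho|x|)^{\gm_p p}$ is bounded on $\R^n$, we have a continuous inclusion $L^p(\R^n)\hookrightarrow L^p_\rho(\R^n)$, so for Schwartz $f$ one has $\norm{T_t f-f}_{L^p_\rho(\R^n)}\lesssim\norm{T_t f-f}_{L^p(\R^n)}\to 0$ as $t\to 0^+$; as the Schwartz class is dense in $L^p_\rho(\R^n)$ for $1\le p<\infty$, the uniform bound extends this to all of $L^p_\rho(\R^n)$, giving strong continuity. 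The semigroup law $T_{s+t}=T_sT_t$ and the identification of the generator with $\D+\rho^2$ are inherited from the corresponding facts for the Gaussian semigroup.
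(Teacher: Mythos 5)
Your proposal is correct; every step either matches or is easily justified, and the two computational pillars are exactly the ones the paper relies on: the reduction of the weight $\tilde{K}_{n/2}(\rho|x|)$ to $w_\rho(x)=(1+|x|)^{(n-1)/2}e^{-\rho|x|}$ via the Macdonald asymptotics, the submultiplicativity estimates $(1+|x+y|)\le(1+|x|)(1+|y|)$ and $\big||u|-|u+y|\big|\le|y|$ for the weight quotient, and the Gaussian integral $e^{-t\rho^2}\int h_t(y)(1+|y|)^{\frac{n-1}{2}\gamma_p}e^{\gamma_p\rho|y|}dy\le C(1+t)^{\frac{n-1}{2}(1+\gamma_p)}e^{-t\rho^2(1-\gamma_p^2)}$, together with $1-\gamma_p^2=4/(pp')$. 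Where you differ is in the packaging: you write $T_tf=e^{-\rho^2t}\int h_t(y)\,\tau_yf\,dy$ and combine Minkowski's integral inequality with an explicit bound $\norm{\tau_y}_{L^p_\rho\to L^p_\rho}\le C(1+|y|)^{\gamma_p(n-1)/2}e^{\gamma_p\rho|y|}$, which treats all $1\le p<\infty$ in one stroke; the paper instead argues for $1<p<\infty$ by pairing $T_tf$ against $g$ in the dual space $L^{p'}(\R^n,(\varphi_{i\rho})^{p'\gamma_{p'}}dx)$, applying H\"older to the inner integral and shifting the weight onto $g$ (the dual incarnation of your translation bound), and then handles $p=1$ by a separate direct kernel estimate. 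Your route is more uniform and avoids invoking the dual-space identification, which the paper needs anyway for later arguments; the paper's route makes the self-adjoint/duality structure explicit, which it reuses in the Dunkl setting. Both proofs leave the final Gaussian integral as a computation (the paper says "a simple calculation"), and your $t\le 1$ / $t\ge 1$ splitting, made rigorous by splitting the radial integral at $r\approx 4t\gamma_p\rho$, fills that in correctly; the strong-continuity argument via the uniform bound for $0<t\le1$, the continuous inclusion $L^p(\R^n)\subset L^p_\rho(\R^n)$ and density is the same as the paper's.
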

\begin{proof}  In view of the asymptotic behavior of the Macdonald function,
it is enough to consider the space defined using $
(1+|x|)^{(n-1)/2}e^{-\rho |x|} $ in place of $
\tilde{K}_{n/2}(\rho |x|).$ For the sake of brevity, just for this section,
we denote the weight function $
(1+|x|)^{(n-1)/2}e^{-\rho |x|}$ by $w_{\rho}(x).$  It is
therefore enough to prove
 \begin{eqnarray*}
   & &\left(\int_{\R^n}|T_tf(x)|^p (w_{\rho}(x))^{p\gm_p} dx
   \right)^{\frac{1}{p}}\\
 & &\leq C (1+t)^{\frac{n-1}{2}(1+\gm_p)} e^{-\frac{4\rho^2}{p p'}t}
 \left(\int_{\R^n}|f(x)|^p (w_{\rho}(x))^{p\gm_p} dx \right)^{\frac{1}{p}}.
 \end{eqnarray*}
The strong continuity of $T_t$ on $L^p_{\rho}(\R^n)$ follows from
the norm estimates. Indeed, for $0<t\leq 1$, the operators $T_t$
are uniformly bounded on
 $L^p_{\rho}(\R^n)$, $1\leq p <\infty$. As $L^p(\R^n, dx)$
 is dense in $L^p_{\rho}(\R^n)$, the strong continuity of $T_t$
 on $L^p_{\rho}(\R^n)$ follows from the same on $L^p(\R^n, dx)$ in view of
 the continuous inclusion $L^p(\R^n, dx)\subset L^p_{\rho}(\R^n)$.\\

 First assume that $ 1 < p < \infty.$ For any $f \in L^p_{\rho}(\R^n)$ and
 $g \in (L^{p}_{\rho}(\R^n))^*\simeq L^{p'}(\R^n, (\varphi_{i \rho}(x))^{\gamma_{p'} p'} dx)$ consider
 $$\int_{\R^n}T_tf(x)g(x) dx = e^{-t\rho^2}\int_{\R^n}\int_{\R^n}f(y)h_t(x-y)g(x)dx dy $$
where $ h_t $ is the heat kernel which is explicitly given by
$$ h_t(x) = (4\pi t)^{-n/2} e^{-\frac{1}{4t}|x|^2}.$$
By making a change of variables  in the  $x$-integral,  the above reads as
 $$ e^{-t\rho^2}\int_{\R^n}h_t(x)\Big (\int_{\R^n}f(y)g(x+y)dy \Big )dx.$$
 Now the inner integral can be estimated by H\"{o}lder's inequality after rewriting it as
 $$\int_{\R^n}f(y)(w_{\rho}(y))^{\gm_p}g(x+y)(w_{\rho}(y))^{-\gm_p}dy.$$
 Since $ \gamma_p = \gamma_{p'} $ the result is the bound
 $$\Big (\int_{\R^n}|f(y)|^p (w_{\rho}(y))^{p\gm_p}dy \Big )^{\frac{1}{p}}
 \Big (\int_{\R^n}|g(x+y)|^{p'} (w_{\rho}(y))^{-p'\gm_{p'}}dy \Big )^{\frac{1}{p'}}. $$
 By making a change of variables, the second integral can be written as
 $$ \Big (\int_{\R^n}|g(y)|^{p'} (w_{\rho}(y-x))^{-p'\gm_{p'}}dy \Big )^{\frac{1}{p'}}. $$
 In view of the inequality $(1+|y|)\leq (1+|x-y|)(1+|x|)$,
 we see that $(w_{\rho}(y-x))^{-\gm_{p'}p'}\leq (1+|x|)^{\frac{n-1}{2}p'\gm_{p'}}
 e^{ p'\gm_{p'}\rho|x|}(w_{\rho}(y))^{-p'\gm_{p'}}$. By making use
 of this, the above integral is bounded by
 $$(1+|x|)^{\frac{n-1}{2}\gm_{p}} e^{\gm_{p'}\rho|x|}\Big (\int_{\R^n}|g(y)|^{p'}
 (\varphi_{i\rho}(y))^{p'\gm_{p'}}dy \Big )^{\frac{1}{p'}} .$$
 Thus $ \left|\int_{\R^n}T_t f(x) g(x) dx\right| $ is bounded by
$$ e^{-t\rho^2}\|f\|_{L^p_{\rho}(\R^n)}\|g\|_{(L^{p}_{\rho}(\R^n))^*}
   \int_{\R^n}h_t(x)(1+|x|)^{\frac{n-1}{2}\gm_{p}} e^{\gamma_p \rho |x|} dx.$$
A  simple calculation shows that
 $$ e^{-t\rho^2} \int_{\R^n}h_t(x)(1+|x|)^{\frac{n-1}{2}\gm_{p'}}e^{ \gm_{p'} \rho|x|}dx
 \leq C(1+t)^{\frac{n-1}{2}(1+\gm_p)}e^{-t\rho^2 (1-\gm_p^2)}  $$
which  completes the proof as $1-\gm_p^2 = \frac{4}{p p'}.$ When $ p = 1 $ we can directly estimate
$$  \int_{\R^n} \int_{\R^n} h_t(x-y) |f(y)| (1+|x|)^{(n-1)/2} e^{-\rho |x|} dx dy.$$
Just make a change of variables in the $x$-integral and proceed as before to get the required estimate.
\end{proof}

%-----------------------------------------------------------------------------------

\subsection{Spectrum of the Laplacian on weighted $L^p$ spaces}
 In case of noncompact Riemannian symmetric spaces $G/K$ the $L^p$
 spectrum of the Laplace-Beltrami operator $\D$ is precisely known.
 It has been proved in Taylor \cite{TM} that the $L^p$ spectrum is
 equal to the parabolic neighborhood
 $$\mathfrak{P}_p=\{ \la^2+|\rho|^2: | \Im(\la)|\leq \gm_p|\rho| \}  $$
 of the half line $[|\rho|^2, \infty ) $. This follows from a
 multiplier theorem proved in \cite{TM} for general Riemannian
 manifolds. It would be nice to see if we have precise
 information about the spectrum of the Dunkl-Laplacian $ \D_\K$
 acting on the spaces $L^p_{\rho, \K}(\R^n)$. In this
 generality, we are not able to determine precisely the spectrum
 of $\D_\K$. However, when $\K=0$ i.e. for the standard Laplacian $\D$ on
 $\R^n$ we do have the following result. In view of the asymptotic behavior of the Macdonald function,
it is enough to consider the space defined using
$w_{\rho}(x):=(1+|x|)^{(n-1)/2}e^{-\rho |x|} $ in place of $
\tilde{K}_{n/2}(\rho |x|).$
 \begin{thm}\label{T8}
 For any $1\leq p<\infty $, the spectrum of $ \D+\rho^2$ on
 $L^p_{\rho}(\R^n)$ is precisely the set
 $$ \mathfrak{P}_p = \{ \la^2+\rho^2: | \Im(\la)|\leq \gm_p \rho \} .$$
For $ p > 2,$ the spectrum of $ \tilde{\Delta} $ on $ L^p(\R^n,(\varphi_{i\rho}(x))^2 dx) $ is also $\mathfrak{P}_p.$
 \end{thm}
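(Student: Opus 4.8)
The plan is to establish the two inclusions $\mathfrak{P}_p \subseteq \operatorname{spec}(\D+\rho^2)$ and $\operatorname{spec}(\D+\rho^2) \subseteq \mathfrak{P}_p$ separately, and then transfer the result to $\tilde\Delta$ by the conjugation $\tilde T_t f = \varphi_{i\rho}^{-1} T_t(f\varphi_{i\rho})$, which is an isometric isomorphism from $L^p(\R^n,(\varphi_{i\rho})^2 dx)$ onto $L^p_\rho(\R^n)$ for $p>2$ (this was already noted in the introduction, via \eqref{J}, since $\gamma_p p = p-2$). For the inclusion $\mathfrak{P}_p \subseteq \operatorname{spec}$, I would exhibit explicit eigenfunctions: for $\lambda\in\C$ with $|\Im\lambda|\le \gamma_p\rho$, the Bessel function $\varphi_\lambda$ satisfies $(\D+\rho^2)\varphi_\lambda = (\lambda^2+\rho^2)\varphi_\lambda$, and the growth estimate $|\varphi_\lambda(x)|\le C_\lambda |x|^{-(n-1)/2} e^{|\Im\lambda||x|}$ recorded in the introduction shows $\varphi_\lambda \in L^p_\rho(\R^n)$ precisely when $|\Im\lambda| < \gamma_p\rho$ (the strict inequality gives an integrable exponential gap $e^{-(\gamma_p\rho - |\Im\lambda|)|x|}$ against the weight $e^{-\gamma_p\rho|x|}$, with the polynomial factor absorbed). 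This shows the open parabolic region $\{\lambda^2+\rho^2 : |\Im\lambda| < \gamma_p\rho\}$ consists of eigenvalues, hence lies in the spectrum; since the spectrum is closed, the closure $\mathfrak{P}_p$ is contained in it.

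For the reverse inclusion, the idea is to use the resolvent together with the operator-norm estimate from Theorem \ref{T7}. Writing $A = \D+\rho^2$, Theorem \ref{T7} gives $\|e^{-tA}\|_{L^p_\rho\to L^p_\rho} \le C(1+t)^{\frac{n-1}{2}(1+\gamma_p)} e^{-t\rho^2(1-\gamma_p^2)/1}\cdot$ — more precisely $e^{-\frac{4\rho^2}{pp'}t} = e^{-\rho^2(1-\gamma_p^2)t}$ — so for $\operatorname{Re} z < \rho^2(1-\gamma_p^2) = c_p$ the Laplace transform $(A - z)^{-1} = \int_0^\infty e^{tz} e^{-tA}\, dt$ converges in operator norm, showing $\operatorname{spec}(A) \subseteq \{z : \operatorname{Re} z \ge c_p\}$. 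This is a half-plane, not yet the parabola, so I would sharpen it: the full parabolic bound should come from applying the same resolvent argument not to $A$ directly but to spectral translates, or more cleanly by a complex-analytic / Phragmén–Lindelöf argument on the vector-valued resolvent $z\mapsto (A-z)^{-1}$, using that the heat-semigroup estimate of Theorem \ref{T7} actually holds with the weight $w_\rho^{p\gamma_p}$ replaced by $w_\rho^{p\gamma_p} e^{p\gamma_p \sigma\cdot x}$-type modulations — equivalently, conjugating $A$ by $e^{\sigma\cdot x}$ turns $\D$ into $\D - 2\sigma\cdot\nabla + |\sigma|^2$, whose heat semigroup on the unweighted space one controls, and optimizing over admissible $\sigma$ traces out exactly the boundary curve $\lambda^2+\rho^2$, $|\Im\lambda| = \gamma_p\rho$. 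Carrying this optimization out identifies $\operatorname{spec}(A)$ with the parabolic region $\mathfrak{P}_p$ from the resolvent side; combined with the first part (which shows the whole region is eigenvalues) we get equality.

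The main obstacle I expect is pinning down the resolvent set to be exactly the complement of $\mathfrak{P}_p$ rather than merely a half-plane: the crude estimate from Theorem \ref{T7} only yields $\operatorname{Re} z \ge c_p$, and one needs the finer, direction-dependent decay of $e^{-tA}$ obtained by weighted-shift conjugation to capture the parabolic shape. A clean way to organize this is to observe that $e^{\sigma\cdot x}$ is a multiplier identifying $L^p_\rho$ with $L^p(\R^n, w_\rho^{p\gamma_p} e^{p\gamma_p\sigma\cdot x} dx)$, rerun the proof of Theorem \ref{T7} (the argument there is a convolution estimate that goes through verbatim with the extra exponential, changing the Gaussian integral $\int h_t(x)(1+|x|)^{\frac{n-1}{2}\gamma_p} e^{\gamma_p\rho|x| + \gamma_p\sigma\cdot x}\,dx$), and read off that $e^{tz}e^{-tA}$ is integrable in $t$ whenever $z$ lies outside the parabola; strong continuity and density of $L^p(\R^n,dx)\subset L^p_\rho(\R^n)$ (already used in Theorem \ref{T7}) ensure the resolvent formula is valid. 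The transfer to $\tilde\Delta$ for $p>2$ is then immediate since spectrum is preserved under conjugation by a topological isomorphism.
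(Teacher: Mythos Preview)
Your first inclusion $\mathfrak{P}_p \subseteq \operatorname{spec}(\Delta+\rho^2)$ via the eigenfunctions $\varphi_\lambda$ (and closure of the spectrum), and the transfer to $\tilde\Delta$ by conjugation, are both correct and match the paper.

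The reverse inclusion, however, has a genuine gap. You correctly observe that the Laplace transform of the heat-semigroup estimate from Theorem~\ref{T7} only buys a half-plane $\{\operatorname{Re} z \geq c_p\}$, not the parabola. Your proposed fix is to conjugate by $e^{\sigma\cdot x}$: you identify $L^p_\rho$ with another weighted space $X_\sigma$ via this multiplication and rerun the heat estimate there. But conjugation by a Banach-space isomorphism preserves the spectrum and preserves operator norms, so the heat-semigroup bound for the conjugated operator on $X_\sigma$ is \emph{literally the same number} as the bound for $A$ on $L^p_\rho$. You recover exactly the same half-plane; no optimisation over $\sigma$ can improve this, because the exponential type of $t\mapsto\|e^{-tA}\|$ encodes only $\inf\operatorname{Re}\operatorname{spec}(A)$. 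To see the parabolic boundary from semigroup methods you would need to pass to complex time $e^{-zA}$ and control the norm on rays $z=te^{i\theta}$, which is a different (and substantially harder) argument that you do not carry out.

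The paper takes a different route entirely: it deduces Theorem~\ref{T8} from a Fourier-multiplier theorem on $L^p_\rho(\R^n)$ (Theorem~\ref{T9}), modelled on Taylor's proof for Riemannian manifolds. One writes $\varphi(\sqrt\Delta)=\varphi_1(\sqrt\Delta)+\varphi_2(\sqrt\Delta)$ where $\widehat{\varphi_1}$ has compact support, so that $\varphi_1(\sqrt\Delta)$ is a pseudodifferential operator with kernel supported near the diagonal (handled by a dyadic localisation on the weighted space), while $\varphi_2\in\mathfrak{S}_W^{-\infty}$ is treated by finite propagation speed of $\cos t\sqrt\Delta$ and a kernel estimate over annuli. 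Once Theorem~\ref{T9} is in hand, for any $z\notin\mathfrak{P}_p$ the symbol $\varphi(\lambda)=(\lambda^2+\rho^2-z)^{-1}$ lies in $\mathfrak{S}_{\gamma_p\rho}^0$, so $(A-z)^{-1}$ is bounded and $z$ is in the resolvent set. This is the step your proposal is missing: a functional calculus sharp enough to detect the parabola, rather than the crude growth bound of the semigroup.
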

 As in the case of symmetric spaces this result can be deduced
 from the following multiplier theorem for the Laplacian on the
 weighted $ L^p$ spaces $L^p_{\rho}(\R^n) $.\\

 In order to state the result we recall some definitions from
 \cite{TM}. Let $ \Omega_W$ be the set $\{\la \in \C:~ |\Im(\la)|<W\}$
 and set $\mathfrak{S}_W^m$ to be the set of all
 even holomorphic functions $ \varphi $ on $\Omega_W$ satisfying
 $$|\varphi^{(j)}(\la) | \leq C_j (1+\la^2)^{\frac{m-j}{2}}$$
 on the closure $ \overline{\Omega}_W$ for all $j=0,1,2,\cdots  $.
 \begin{thm}\label{T9}
 For every $1<p<\infty $ we have $\varphi(\sqrt{\D}):
 L^p_{\rho}(\R^n)\rightarrow L^p_{\rho}(\R^n)$ provided
 $\varphi \in \mathfrak{S}_W^0 $ with $ W \geq \gm_p \rho $.
 \end{thm}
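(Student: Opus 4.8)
The strategy is to reduce this multiplier theorem for $\sqrt{\D}$ on the weighted space $L^p_\rho(\R^n)$ to the unweighted multiplier theory on $\R^n$, exactly as Taylor does for symmetric spaces, by exploiting the same conjugation structure that produced the sharp semigroup estimate in Theorem \ref{T7}. The key point is that $\varphi(\sqrt{\D})$ can be written as a superposition of the wave (or heat, or resolvent) propagators for which we already control the weighted operator norm, and the class $\mathfrak{S}_W^0$ is precisely designed so that this superposition converges.

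First I would recall the Taylor/Cheeger--Gromov--Taylor philosophy: if $\varphi\in\mathfrak{S}_W^0$, then $\varphi(\sqrt{\D})$ admits a representation of the form
\begin{equation*}
\varphi(\sqrt{\D}) = \int_{\R} \g(s)\, e^{is\sqrt{\D}}\, ds + (\text{smoothing term}),
\end{equation*}
but more convenient here is the subordination to the heat semigroup: one writes $\varphi(\sqrt{\D})$ as a contour integral of the resolvent $(\D+\rho^2-\zeta)^{-1}$ over a parabola whose opening is governed by $W$, or equivalently decomposes $\varphi$ into pieces and uses $e^{-t(\D+\rho^2)}$. Concretely, I would shift the spectral variable: for $\varphi$ even and holomorphic on $\Omega_W$, write $\varphi(\sqrt{\D}) = \psi(\D+\rho^2)$ for an appropriate $\psi$ holomorphic on the parabolic region $\mathfrak{P}_p$, and represent $\psi(\D+\rho^2)$ via the Laplace/Cauchy formula against $e^{-z(\D+\rho^2)}$ for $z$ ranging over a suitable sector or vertical line in $\C$. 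Theorem \ref{T7} (extended to complex time $z$ with $\Re z>0$, which costs only the obvious factors) gives
\begin{equation*}
\|e^{-z(\D+\rho^2)}\|_{L^p_\rho\to L^p_\rho} \leq C\,(1+|z|)^{\frac{n-1}{2}(1+\gm_p)}\, e^{-\frac{4\rho^2}{pp'}\Re z},
\end{equation*}
and the polynomial loss $(1+|z|)^{\frac{n-1}{2}(1+\gm_p)}$ is absorbed because $\varphi\in\mathfrak{S}_W^0$ (order $0$, with all derivatives controlled) forces the corresponding density in the integral representation to decay fast enough. Since $W\geq\gm_p\rho$ means the holomorphy region of $\psi$ covers the parabola $\{\la^2+\rho^2:|\Im\la|\leq\gm_p\rho\}$ that bounds the spectrum, the contour can be pushed into the region of exponential decay $e^{-\frac{4\rho^2}{pp'}\Re z}$, making the integral absolutely convergent in operator norm.

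The main obstacle, and the step I would spend the most care on, is making the integral representation of $\varphi(\sqrt{\D})$ precise enough that the estimate from Theorem \ref{T7} can actually be applied termwise: one must split $\varphi$ into a compactly-Fourier-supported part (handled by finite propagation speed of the Euclidean wave equation, giving an $L^1$-kernel convolution operator which is bounded on every weighted $L^p$ with a locally integrable, polynomially-bounded weight — and $w_\rho^{p\gm_p}$ qualifies after the same change-of-variables trick used in Theorem \ref{T7}) and a high-frequency part (handled by the heat/resolvent subordination above, where the exponential gain beats the polynomial loss). Matching the constant: the exponent $\frac{4\rho^2}{pp'} = \rho^2(1-\gm_p^2) = c_p$ is exactly the threshold, so the decay in Theorem \ref{T7} is what allows the contour of integration for $\psi$ to be taken along $\Re z = 0$ pushed slightly into $\Re z>0$; this is the heart of why the hypothesis $W\geq\gm_p\rho$ (and not a strictly larger bound) suffices. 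Once boundedness of $\varphi(\sqrt\D)$ on $L^p_\rho(\R^n)$ is established for $\varphi\in\mathfrak{S}_W^0$, Theorem \ref{T8} follows by the standard argument: outside $\mathfrak{P}_p$ the resolvent $(\D+\rho^2-\zeta)^{-1}$ is given by an element of $\mathfrak{S}_W^0$-type symbol hence bounded, while points of $\mathfrak{P}_p$ are shown to be in the spectrum by testing against the approximate eigenfunctions $\pl(x)\varphi_{i\rho}(x)^{-1}\in L^p_\rho(\R^n)$ constructed in the introduction, suitably truncated.
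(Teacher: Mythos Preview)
Your proposal takes a genuinely different route from the paper, and as written it contains a real gap at the sharp endpoint $W=\gm_p\rho$.

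The paper does \emph{not} subordinate to the heat semigroup. It follows Taylor \cite{TM} directly via the wave representation
$\varphi(\sqrt{\D})=(2\pi)^{-1/2}\int_{\R}\widehat{\varphi}(t)\cos t\sqrt{\D}\,dt$,
splits $\widehat{\varphi}=\widehat{\varphi}_1+\widehat{\varphi}_2$ with $\widehat{\varphi}_1$ compactly supported, and treats the two pieces separately. The piece $\varphi_1(\sqrt{\D})$ is a pseudodifferential operator of order $0$ whose Schwartz kernel is supported in $|x-y|\le\frac12$; the paper then uses the \emph{unweighted} $L^p$-boundedness of order-$0$ PsDOs together with an annulus decomposition to transfer to $L^p_\rho$. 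Your description of this piece as an ``$L^1$-kernel convolution operator'' is incorrect: an order-$0$ symbol does not give an integrable kernel (think of Riesz transforms), so ``bounded on every weighted $L^p$ with a locally integrable weight'' is not available. One really needs the Calder\'on--Zygmund/PsDO boundedness on unweighted $L^p$ first, plus the diagonal support, which is what the paper does.

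The smoothing piece $\varphi_2\in\mathfrak{S}_W^{-\infty}$ is where the threshold $W\ge\gm_p\rho$ actually enters, and here your heat-subordination idea does not reach it. Theorem \ref{T7} gives $\|e^{-t(\D+\rho^2)}\|_{L^p_\rho}\le C(1+t)^a e^{-c_p t}$ with $c_p=\rho^2(1-\gm_p^2)$; feeding this into a Laplace/Cauchy representation yields resolvent control only in the half-plane $\Re\zeta<c_p$, i.e.\ spectrum contained in $\{\Re\zeta\ge c_p\}$. That is strictly weaker than the parabola $\mathfrak{P}_p$, so it only handles multipliers $\psi$ holomorphic on a half-plane --- equivalently $\varphi\in\mathfrak{S}_W^0$ with some $W>\gm_p\rho$, not the endpoint. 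Your proposed complex-time extension does not rescue this: for $z=t+is$ one finds $\|e^{-z\D}\|_{L^p_\rho}$ controlled essentially by $\exp\big(\gm_p^2\rho^2\,|z|^2/\Re z\big)$, which blows up in $|s|$ and spoils any contour integral along a parabola. The paper instead exploits finite propagation speed of $\cos t\sqrt{\D}$ together with the Paley--Wiener decay $|\widehat{\varphi_2}(t)|\le C_N(1+t^2)^{-N/2}e^{-W|t|}$; this pairs the exponential $e^{-W|t|}$ against the weight growth $e^{\rho|x|}$ along the annuli $|x-y|\sim m$ and gives convergence exactly when $W\ge\rho$ at $p=1$, after which Stein--Weiss interpolation with $p=2$ produces the general threshold $W\ge\gm_p\rho$. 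That matching of exponential rates in the \emph{wave} variable $t$ --- not the heat variable --- is the mechanism you are missing.
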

 In proving this theorem we closely follow \cite{TM} (see proof of Theorem
 A). We use the functional calculus to write
 $$\varphi(\sqrt{\D})=(2\pi)^{-\frac{1}{2}}\int_{-\infty}^{\infty}
 \widehat{\varphi}(t)\cos{t\sqrt{\D}}\: dt .$$
 Using a partition of unity we write $\widehat{\varphi}=
 \widehat{\varphi}_1 + \widehat{\varphi}_2 $ where
 $\widehat{\varphi}_1$ is compactly supported and
 $\widehat{\varphi}_2(t)=0$ for $|t|$ small. As a consequence of this
 decomposition we have
 \begin{lem}[see Lemma 1.3 in \cite{TM}]\label{L1}
 Given $\varphi \in \mathfrak{S}_W^m$ we can write
 $\varphi= \varphi_1+\varphi_2$ where $\widehat{\varphi}_1$
 has compact support, $\varphi_1\in \mathfrak{S}_{W'}^m$ for
 all $W' < W $ and $\varphi_2 \in \mathfrak{S}_W^m $.
 \end{lem}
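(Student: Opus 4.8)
The plan is to cut $\varphi$ on the Fourier transform side with a smooth function supported near the origin. Since $\varphi$ is an even function on $\R$ of polynomial growth, $|\varphi(\lambda)| \le C(1+|\lambda|^2)^{m/2}$, its Fourier transform $\widehat{\varphi}$ is an even tempered distribution. Fix an even $\psi \in C_c^\infty(\R)$ with $\psi \equiv 1$ on a neighbourhood of $0$, and define $\varphi_1,\varphi_2$ by $\widehat{\varphi_1} = \psi\,\widehat{\varphi}$ and $\widehat{\varphi_2} = (1-\psi)\,\widehat{\varphi}$, so that $\varphi = \varphi_1 + \varphi_2$. Then $\widehat{\varphi_1}$ is a compactly supported distribution (hence $\varphi_1$ extends to an entire function by the Paley--Wiener--Schwartz theorem), $\widehat{\varphi_2}$ vanishes in a neighbourhood of $0$, and $\varphi_1,\varphi_2$ are even because $\psi$ and $\widehat\varphi$ are. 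This settles all the Fourier-support requirements, and it remains to prove the symbol estimates on the strips.

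For the estimates I would use the convolution representation $\varphi_1 = c_0\,\psi^{\vee} * \varphi$, where $\psi^{\vee}$ is the (Schwartz) inverse Fourier transform of $\psi$ and $c_0$ a normalizing constant. If $\lambda$ lies in the closed strip $\overline{\Omega}_W$ and $s \in \R$, then $\lambda - s \in \overline{\Omega}_W$, so $\varphi \in \mathfrak{S}_W^m$ gives $|\varphi^{(j)}(\lambda - s)| \le C_j(1+|\lambda - s|^2)^{(m-j)/2}$ for all $j$. Combining this with Peetre's inequality $(1+|\lambda - s|^2)^{(m-j)/2} \le 2^{|m-j|/2}(1+|\lambda|^2)^{(m-j)/2}(1+s^2)^{|m-j|/2}$ and the rapid decay of $\psi^{\vee}$, one differentiates under the integral sign to get $|\varphi_1^{(j)}(\lambda)| \le C_j'(1+|\lambda|^2)^{(m-j)/2}$ on $\overline{\Omega}_W$. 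Hence $\varphi_1 \in \mathfrak{S}_{W'}^m$ for every $W' < W$ (in fact the bounds persist up to $|\Im(\lambda)| = W$), and therefore $\varphi_2 = \varphi - \varphi_1 \in \mathfrak{S}_W^m$.

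The delicate point is exactly the passage from ``$\widehat{\varphi_1}$ is a compactly supported distribution'' to ``$\varphi_1$ is holomorphic on the strip and satisfies symbol bounds there'': Paley--Wiener--Schwartz by itself only yields that $\varphi_1$ is entire of exponential type, which is far weaker than a symbol estimate. The convolution identity $\varphi_1 = c_0\,\psi^{\vee} * \varphi$ is what transports the symbol estimates for $\varphi$ from the real line into the whole strip $\overline{\Omega}_W$; once this representation and the legitimacy of differentiating under the integral are established, everything else in the statement follows at once. This is Lemma 1.3 of \cite{TM}, and the argument sketched here is in essence the one given there.
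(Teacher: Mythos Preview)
Your proposal is correct and follows precisely the approach indicated in the paper: the paper does not give an independent proof of this lemma but simply writes $\widehat{\varphi}=\widehat{\varphi}_1+\widehat{\varphi}_2$ via a partition of unity on the Fourier side (with $\widehat{\varphi}_1$ compactly supported and $\widehat{\varphi}_2$ vanishing near $0$) and then cites Lemma~1.3 of \cite{TM}. Your cutoff $\psi$ and the convolution identity $\varphi_1=c_0\,\psi^{\vee}\ast\varphi$, together with Peetre's inequality, supply exactly the missing details behind that citation.
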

 In an earlier paper \cite{CGT} it has been proved that
 $\varphi_1(\sqrt{\D})$ is a pseudo-differential
 operator whose distribution kernel is supported
 near the diagonal. Consequently, the boundedness of
 pseudo differential operators of order 0 on $L^p$ spaces gives us

 \begin{lem}\label{L2}
 If $\varphi_1$ is as in the previous lemma with $m=0$
 then for any $1<p<\infty$, $\varphi_1(\sqrt{\D}) $
 is bounded on $ L^p(\R^n)$.
 \end{lem}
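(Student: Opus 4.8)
The plan is to deduce the lemma from two ingredients that are already available: the fact that $\varphi_1(\sqrt{\D})$ is a pseudodifferential operator of order zero, and the classical $L^p$-theory of such operators. The starting observation is that, since $\varphi_1$ is even and holomorphic on $\Omega_{W'}$, it is a holomorphic function of $\lambda^2$; hence the Fourier multiplier $m(\xi):=\varphi_1(|\xi|)$ attached to $\varphi_1(\sqrt{\D})$ (recall that $\sqrt{\D}$ has symbol $|\xi|$) is genuinely smooth on all of $\R^n$, with no singularity at the origin. Feeding the bounds $|\varphi_1^{(j)}(\lambda)|\le C_j(1+\lambda^2)^{-j/2}$ coming from $\varphi_1\in\mathfrak{S}_{W'}^0$ into the Fa\`a di Bruno formula for $\partial_\xi^\alpha m(\xi)$, together with the elementary estimates $\bigl|\partial_\xi^\beta|\xi|\bigr|\le C_\beta|\xi|^{1-|\beta|}$ valid for $|\xi|\ge1$, one obtains $|\partial_\xi^\alpha m(\xi)|\le C_\alpha(1+|\xi|)^{-|\alpha|}$ for every multi-index $\alpha$; that is, $m$ belongs to the symbol class $S^0_{1,0}$.

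Next I would invoke the result of \cite{CGT} quoted above: with $\widehat{\varphi}_1$ compactly supported, say in $[-R,R]$, the wave representation $\varphi_1(\sqrt{\D})=(2\pi)^{-1/2}\int_{-\infty}^{\infty}\widehat{\varphi}_1(t)\cos(t\sqrt{\D})\,dt$ exhibits $\varphi_1(\sqrt{\D})$ as a pseudodifferential operator of order zero whose Schwartz kernel is supported in the fixed neighbourhood $\{(x,y):|x-y|\le R\}$ of the diagonal, the support statement being the finite-propagation-speed property of $\cos(t\sqrt{\D})$ for the standard wave operator $\partial_t^2+\D$. For the unweighted assertion of the present lemma the near-diagonal support is not actually needed --- the symbol estimate of the previous paragraph already places $\varphi_1(\sqrt{\D})$ within the scope of the Mikhlin--H\"ormander multiplier theorem --- but it is recorded here because it is precisely what permits one later to transfer $L^p$-boundedness from $L^p(\R^n,dx)$ to the weighted spaces $L^p_\rho(\R^n)$, the weight $w_\rho$ being bounded above and below on compact sets.

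The lemma now follows from the standard theorem that a pseudodifferential operator of order $0$ --- equivalently, a Fourier multiplier with symbol in $S^0_{1,0}$ --- is bounded on $L^p(\R^n)$ for every $1<p<\infty$; see, for instance, \cite{TM} or the classical references on singular integrals. Concretely, $L^2$-boundedness is immediate from Plancherel since $m$ is bounded, the H\"ormander cancellation condition on the kernel off the diagonal follows from the symbol estimates by the usual integration-by-parts argument applied to $\int e^{ix\cdot\xi}m(\xi)\,d\xi$, and Calder\'on--Zygmund theory together with duality then yields the full range $1<p<\infty$. Since the lemma is in essence an assembly of known facts, there is no genuine obstacle; the only point demanding a little care is the smoothness of $m(\xi)=\varphi_1(|\xi|)$ at $\xi=0$, which is exactly why the evenness built into the definition of $\mathfrak{S}_W^m$ is essential.
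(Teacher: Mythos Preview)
Your proposal is correct and follows essentially the same route as the paper. The paper's treatment is even terser: it simply cites \cite{CGT} for the fact that $\varphi_1(\sqrt{\D})$ is a pseudodifferential operator of order~$0$ with kernel supported near the diagonal, and then invokes the standard $L^p$-boundedness of such operators---your argument fills in the symbol verification and the Mikhlin--H\"ormander alternative, but the skeleton is identical.
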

 Using the fact that the distribution kernel of
 $\varphi_1(\sqrt{\D})$ is supported in a neighborhood
 of the diagonal, say $|x-y|\leq \frac{1}{2}$, we can actually
 prove the boundedness of $\varphi_1(\sqrt{\D})$ on the
 weighted $L^p$ spaces $L^p_{\rho}(\R^n)$. To see this let
 $k(x,y)$ be the distribution kernel of $\varphi_1(\sqrt{\D})$
 which is supported in $|x-y|\leq \frac{1}{2}$ so that
 $$\varphi_1(\sqrt{\D})f(x)=\int_{|x-y|\leq \frac{1}{2}}k(x,y)f(y)dy.$$
 Consider now $\int_{\R^n}|\varphi_1(\sqrt{\D})f(x)|^p(w_{\rho}(x))^{p\gm_p}dx
 $ which is equal to
 \begin{eqnarray*}
 % \nonumber to remove numbering (before each equation)
    & &
   \sum_{m=0}^{\infty}\int_{m-\frac{1}{2}\leq |x| < m+\frac{1}{2}}
   |\varphi_1(\sqrt{\D})f(x)|^p(w_{\rho}(x))^{p\gm_p}dx  \\
   & \leq & C \sum_{m=0}^{\infty} (m+2)^{\frac{(n-1)}{2}p\gm_p}e^{-p\gm_p\rho (m-\frac{1}{2})}
   \int_{m-\frac{1}{2}\leq |x| < m+\frac{1}{2}}|\varphi_1(\sqrt{\D})f(x)|^pdx.
 \end{eqnarray*}
 Since the kernel $k(x,y)$ is supported in $|x-y|\leq\frac{1}{2}$
 we observe that
 \begin{eqnarray*}
 % \nonumber to remove numbering (before each equation)
    \chi_{m-\frac{1}{2}\leq |x| < m+\frac{1}{2}}(x)\varphi_1(\sqrt{\D})f(x)
    = \int_{|x-y|\leq\frac{1}{2}}k(x,y)f(y)\chi_{m-1\leq |y| \leq m+1}(y) dy.
 \end{eqnarray*}
 Consequently, the boundedness of $ \varphi_1(\sqrt{\D}) $ gives the estimate
 \begin{eqnarray*}
 % \nonumber to remove numbering (before each equation)
    \int_{m-\frac{1}{2}\leq|x|< m+\frac{1}{2}}|\varphi_1(\sqrt{\D})f(x)|^p
    dx
    \leq  C \int_{m-1\leq |y| \leq m+1}|f(y)|^p dy
    \end{eqnarray*}
which is easily seen to be bounded by $$
C (1+m)^{-\frac{(n-1)}{2}p\gm_p}e^{p\gm_p \rho(m+1)}\int_{m-1\leq |y| <m+1}|f(y)|^p
    (w_{\rho}(y))^{p\gm_p}dy.$$
 Summing over $m$ we obtain
 \begin{eqnarray*}
 % \nonumber to remove numbering (before each equation)
     \int_{\R^n}|\varphi_1(\sqrt{\D})f(x)|^p(w_{\rho}(x))^{p\gm_p}dx
    \leq C \int_{\R^n}|f(y)|^p(w_{\rho}(y))^{p\gm_p}dy
 \end{eqnarray*}
 which takes care of $\varphi_1(\sqrt{\D})$. The proof of Theorem \ref{T9} will be complete once we prove the following
 result.\\
 \begin{thm}\label{T10}
 If $\varphi \in \mathfrak{S}_W^{-\infty}$, $W\geq \gm_p\rho$
 then $$\varphi(\sqrt{\D}):L^p_{\rho}(\R^n)\rightarrow L^p_{\rho}(\R^n)$$
 is bounded for $1\leq p<\infty $.
 \end{thm}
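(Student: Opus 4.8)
The plan is to reduce the weighted $L^p$ boundedness of $\varphi(\sqrt{\D})$ for $\varphi\in\mathfrak{S}_W^{-\infty}$ to a kernel estimate, using the finite-propagation-speed representation
$$\varphi(\sqrt{\D})=(2\pi)^{-1/2}\int_{-\infty}^{\infty}\widehat{\varphi}(t)\,\cos t\sqrt{\D}\,dt.$$
By Lemma \ref{L1} we split $\varphi=\varphi_1+\varphi_2$; the term $\varphi_1(\sqrt{\D})$, whose kernel is supported near the diagonal, has already been handled in the discussion preceding the statement (the argument there used only $m=0$, i.e. $\varphi_1\in\mathfrak{S}_{W'}^0$, which is automatic when $\varphi\in\mathfrak{S}_W^{-\infty}$). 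So it suffices to treat $\varphi_2$, for which $\widehat{\varphi}_2$ vanishes near $t=0$; since $\varphi\in\mathfrak{S}_W^{-\infty}$, $\widehat{\varphi}_2$ extends holomorphically and, crucially, $e^{W|t|}\widehat{\varphi}_2(t)$ (together with all its derivatives) lies in every Schwartz class. This exponential decay of $\widehat{\varphi}_2$ beyond $e^{-W|t|}$ is what will compensate the exponential weight, and this is the heart of the matter.

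First I would write the kernel of $\varphi_2(\sqrt{\D})$ by the wave-equation parametrix: $\cos t\sqrt{\D}$ has kernel supported in $|x-y|\le |t|$, and one has explicit control of its size. Concretely, for $|t|\ge c>0$ the kernel $K_t(x,y)$ of $\cos t\sqrt{\D}$ satisfies, after integrating against the rapidly decaying $\widehat{\varphi}_2$, a pointwise bound of the form
$$|k_2(x,y)|\le C_N\,(1+|x-y|)^{-N}\,e^{-W|x-y|}$$
for every $N$, valid for all $x,y\in\R^n$. (One obtains this by stationary phase / the explicit Poisson-type formula for the wave kernel in $\R^n$, moving the contour in the $t$-integral by $W$; the holomorphy and decay of $\widehat{\varphi}_2$ in $\Omega_W$ give exactly the factor $e^{-W|x-y|}$, and the Schwartz decay gives the polynomial gain.) I would record this as the key estimate and then treat $\varphi_2(\sqrt{\D})$ as an integral operator with that kernel.

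Given the kernel bound, weighted boundedness follows by a Schur-type argument tailored to $w_\rho(x)=(1+|x|)^{(n-1)/2}e^{-\rho|x|}$ and the exponent $p\gamma_p$. Writing $W\ge\gamma_p\rho$ and using $(1+|y|)\le (1+|x-y|)(1+|x|)$ together with $\big|\,|y|-|x|\,\big|\le|x-y|$, one gets
$$\frac{w_\rho(y)^{\gamma_p}}{w_\rho(x)^{\gamma_p}}\le C\,(1+|x-y|)^{\frac{n-1}{2}\gamma_p}\,e^{\gamma_p\rho\,|x-y|}\le C\,(1+|x-y|)^{\frac{n-1}{2}\gamma_p}\,e^{W|x-y|},$$
so the kernel of the conjugated operator $f\mapsto w_\rho(x)^{\gamma_p}\varphi_2(\sqrt\D)\big(w_\rho^{-\gamma_p}f\big)(x)$ is bounded by $C_N(1+|x-y|)^{-N+\frac{n-1}{2}\gamma_p}$, which for $N$ large is in $L^1(dy)$ uniformly in $x$ and symmetrically in $L^1(dx)$ uniformly in $y$. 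Schur's lemma then gives $L^p(\R^n,dx)$ boundedness of the conjugated operator for all $1\le p<\infty$ (indeed $1\le p\le\infty$), i.e. $\varphi_2(\sqrt\D)$ is bounded on $L^p_\rho(\R^n)$. Combining with the $\varphi_1$ part completes the proof.

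The main obstacle is establishing the key kernel estimate $|k_2(x,y)|\le C_N(1+|x-y|)^{-N}e^{-W|x-y|}$ with the \emph{sharp} exponential rate $e^{-W|x-y|}$: one must exploit that $\varphi\in\mathfrak{S}_W^{-\infty}$ forces $\widehat{\varphi}$ to be holomorphic and rapidly decreasing in the strip $\abs{\IM t}<W$, and then shift the contour in the $t$-integral by the full amount $W$ while keeping the support condition $|x-y|\le|t|$ under control. For $n$ odd this is transparent from the explicit form of the wave kernel as derivatives of $\delta(|x-y|-|t|)$; for $n$ even one needs the Poisson-kernel (half-wave) representation or a descent argument, and the polynomial factors from the parametrix must be absorbed into the $(1+|x-y|)^{-N}$. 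Once this estimate is in hand, everything else is the routine Schur-lemma bookkeeping sketched above, exactly paralleling the proof of Theorem A in \cite{TM}.
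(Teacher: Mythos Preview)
Your approach is correct but genuinely different from the paper's. You aim for a \emph{pointwise} kernel bound $|k_2(x,y)|\le C_N(1+|x-y|)^{-N}e^{-W|x-y|}$ (obtainable on $\R^n$ from the explicit wave kernel and the exponential decay $|\widehat{\varphi}(t)|\le C_N(1+|t|)^{-N}e^{-W|t|}$), and then run a direct Schur test after conjugating by $w_\rho^{\gamma_p}$, which gives all $1\le p<\infty$ at once. The paper instead never establishes pointwise kernel bounds: it proves only an \emph{$L^2$-annular} estimate on $k_\varphi(\cdot,y)$ over $\{m\le |x-y|<m+1\}$ via the decomposition $\delta_y=\Delta^{s/2}g_y+h_y$ with $g_y,h_y\in L^2$, combined with finite propagation speed and the $L^2$-boundedness of $\cos t\sqrt{\Delta}$; this yields the weighted $L^1$ bound by Cauchy--Schwarz, and then Stein--Weiss interpolation with the trivial $L^2$ bound gives $1\le p\le 2$, with $p>2$ by duality. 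So your remark that the argument ``exactly parallels the proof of Theorem~A in \cite{TM}'' is slightly off: it is the \emph{paper's} proof that follows Taylor's $L^2$-based scheme (designed for manifolds where pointwise kernel control is unavailable), while your route is a more elementary Euclidean shortcut.

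What each approach buys: your Schur argument is cleaner and treats all $p$ simultaneously, but it rests on the explicit Euclidean wave parametrix (with the odd/even case distinction you flag). The paper's argument needs only the unitarity of $\cos t\sqrt{\Delta}$ on $L^2$ and finite propagation speed, so it is the one that would transfer to the Dunkl or manifold setting without having to control the wave kernel pointwise. One minor remark: the ratio you display is $w_\rho(y)^{\gamma_p}/w_\rho(x)^{\gamma_p}$, whereas the conjugated kernel carries $w_\rho(x)^{\gamma_p}/w_\rho(y)^{\gamma_p}$; the bound is of course symmetric in $x,y$, so this is harmless, but it is worth stating both directions when you invoke Schur.
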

 \begin{proof}
  Once again the proof is a modification of the proof of Proposition 1.4
  in \cite{TM}. We only prove the theorem when $1\leq p\leq 2$.
  The case $p>2 $ can be  handled by duality. Since $\varphi(\sqrt{\D})$
  is bounded on $L^2(\R^n)$, it is enough to
  prove the boundedness of $\varphi(\sqrt{\D})$ on $L^1_{\rho}(\R^n)=
  L^1(\R^n, \tilde{K}_{n/2}(\rho |x|)dx)$. For then, we can appeal to
  Stein-Weiss interpolation theorem with change of measures to get
  the desired result, see \cite{TM}. In fact our choice of the measure,
  namely $(\tilde{K}_{n/2}(\rho |x|))^{p \gamma_p} $ is motivated by this theorem.\\

  If $k_\varphi(x,y)$ stands for the kernel of $\varphi(\sqrt{\D}),$
  we need to show that
  $$\sup_{y\in \R^n}(w_{\rho}(y))^{-1}\int_{\R^n}|k_{\varphi}(x,y)|w_{\rho}(x)dx \leq C. $$
  Let $A_y(m)$ be the annulus $\{x:~m\leq |x-y|< m+1\}$ and consider
  $$\int_{\R^n}|k_{\varphi}(x,y)|w_{\rho}(x)dx=\sum_{m=0}^{\infty}
  \int_{A_y(m)}|k_{\varphi}(x,y)|w_{\rho}(x)dx. $$
  By Cauchy-Schwarz we estimate the above by
  $$\sum_{m=0}^{\infty}\Big ( \int_{A_y(m)}(w_{\rho}(x))^2dx\Big )^{\frac{1}{2}}
  \Big ( \int_{A_y(m)}|k_{\varphi}(x,y)|^2 dx\Big )^{\frac{1}{2}} .$$
  Now\begin{eqnarray*}
   (w_{\rho}(y))^{-2}\int_{A_y(m)}(w_{\rho}(x))^2dx &=&\int_{A_y(m)}\Big ( \frac{1+|x|}{1+|y|}\Big
  )^{(n-1)}e^{2\rho(|y|-|x|)}dx\\
     &\leq& (m+1)^{n-1} e^{2\rho(m+1)}\int_{A_y(m)}dx\\
     &\leq& C(m+1)^{2(n-1)} e^{2m\rho}
  \end{eqnarray*}
  and consequently
  $$(w_{\rho}(y))^{-1}\int_{\R^n}|k_{\varphi}(x,y)|w_{\rho}(x)dx$$
  $$\leq C \sum_{m=0}^{\infty}(m+1)^{n-1}e^{m\rho}\Big (
  \int_{A_y(m)}|k_{\varphi}(x,y)|^2 dx \Big )^{\frac{1}{2}}.$$
  Let $L^2$ norm of $k_{\varphi}(x,y)$ over the annulus $A_y(m)$
  can be estimated as in \cite{TM}. For the convenience of
  the reader we give some details.\\

  Let $\dl_y$ stand for the Dirac delta distribution at $y$. Then we
  can find functions $g_y$ and $h_y$ both in $L^2(|x-y|\leq 1)$
  such that $\dl_y=\D^{\frac{s}{2}}g_y +h_y$ where
  $s=\Big [ \frac{n}{4}\Big]+1$. We can assume that
  $\|g_y\|_2$ and $\|h_y\|_2$ are bounded uniformly in $y$.
  With this decomposition of $\dl_y$ we obtain
  $$k_{\varphi}(x,y)=\varphi(\sqrt{\D})\dl_y(x)=
  \varphi_s(\sqrt{\D})g_y(x)+\varphi(\sqrt{\D})h_y(x)$$
  where $\varphi_s(\la)=\la^{2s}\varphi(\la)$.
  By the finite propagation speed, on the annulus $A_y(m)$
  we have
  $$\varphi_s(\sqrt{\D})g_y(x)=\int_{|t|\geq m-1}\widehat{\varphi}_s(t)(\cos{t\sqrt{\D}})g_y(x)dt. $$
  Since $\varphi_s \in \mathfrak{S}_W^{-\infty}$, we have the estimate
  $$|\widehat{\varphi}_s(t)|\leq C_N (1+t^2)^{-\frac{N}{2}}e^{-W|t|},~ ~ N=0,1,2,\cdots .$$
  Using the boundedness of $\cos{t\sqrt{\D}} $ on $L^2(\R^n)$ we have, for any $N$,
  $$\Big (\int_{A_y(m)}|\varphi_s(\sqrt{\D})g_y(x)|^2 dx \Big )^{\frac{1}{2}}
  \leq C_N \int_{|t|\geq m-1}(1+t^2)^{-\frac{N+2}{2}}e^{-W|t|}dt$$
  $$\leq C_N(1+m^2)^{-\frac{N}{2}}e^{-Wm} .$$
  A similar estimate holds for $\varphi(\sqrt{\D})h_y$ on $A_y(m)$.
  Putting these estimates together we have
  $$ (w_\rho(y))^{-1} \int_{\R^n}|k_{\varphi}(x,y)|w_\rho(x) dx $$
  $$\leq C \sum_{m=0}^{\infty}(m+1)^{n-1}e^{m\rho}(1+m^2)^{-\frac{N}{2}}e^{-Wm} $$
  which is finite provided $W\geq \rho $ if we take $N> n+1 $.
  This completes the proof of Theorem \ref{T10}.
 \end{proof}

%--------------------------------------------------------------------------

\subsection{The chaotic behavior of the heat semigroup:}
   In this subsection we prove one of the main theorems namely Theorem \ref{T4} regarding the chaotic
 behavior of the semigroup $T_t^c = e^{ct}e^{-t(-\D+\rho^2)}$,
 $c \in \R$ on the space $L_{\rho}^p(\R^n)$. In proving the
 result we closely follow the proofs given in \cite{PS} for the case of
 symmetric spaces of non-compact type. As in \cite{PS} we let
 $$\Lambda_p = \{\la \in \C: \; |\Im(\la)|\leq \gm_p\rho \} $$
 and define $c_p= (1-\gm_p^2)\rho^2=\frac{4\rho^2}{pp'}$.
 For any $c>c_p$ we also define a map
 $$\om_c:\Lambda_p \rightarrow \C, \; \; \om_c(\la)= \la^2+\rho^2-c.$$
 Using this we define the following three subsets of $\Lambda_p^0$,
 the interior of $\Lambda_p$;
 \begin{eqnarray*}
 % \nonumber to remove numbering (before each equation)
   A_1 &=& \{\la \in \Lambda_p^0: \Re(\om_c(\la))>0\}; \\
   A_2 &=& \{\la \in \Lambda_p^0: \Re(\om_c(\la))<0\}; \\
   A_3 &=& \{\la \in \Lambda_p^0: \om_c(\la) \in i\mathbb{Q}\};
 \end{eqnarray*}
 where $\mathbb{Q}$ is the set of all rationals. In \cite{PS} the authors
 have proved that all these sets are non-empty and $A_3$ has infinitely
 many points (see Lemma 4.1 in \cite{PS})  for $ c > c_p$.
 Also note that $A_1$ and $A_2$ are both open subsets of $\Lambda_p^0$. \\

  To each of these $A_j$'s we associate certain subsets $\mathcal{A}_j$ as
  follows. The translation of the spherical functions $\varphi_{\la}(y)$
  are given  by the equation
  $$\tau_x\varphi_\la(y)= \int_{\sn}e^{i\la (x+y)\cdot \om}d\si(\om).$$
  It is therefore clear that $\tau_x\varphi_\la$ are eigenfunctions of $\D + \rho^2$ with eigenvalues
  $\la^2+\rho^2$ and that $\tau_x\varphi_\la \in L^p_{\rho}(\R^n)$ whenever $\la \in \Lambda_p^0$ and $p\neq 2 $.
  This follows from the estimate $|e^{i\la x\cdot \om}|\leq C e^{|\Im(\la)|\:|x|}$. For each $1\leq j \leq 3 $ we set
  $$\mathcal{A}_j= \{\tau_x\varphi_\la(x): \; x \in \R^n, \; \la \in A_j\} .$$
  It is then clear that $\mathcal{A}_j\subset L^p_{\rho}(\R^n)$
  and that for $f \in \mathcal{A}_j$, $T_tf = e^{-t\om_c(\la)}f $.\\

  We now recall certain results from the general theory
 of chaotic semigroups. Given a strongly continuous semigroup
 $T_t$ on a Banach space $\B$ the following three subsets of $\B$
 are important in detecting the chaotic behavior of $T_t$:
 \begin{eqnarray*}
 % \nonumber to remove numbering (before each equation)
   \mathcal{B}_0 &=& \{x \in \B: \; \lim_{t \rightarrow \infty}T_t x =0 \};  \\
   \B_{\infty} &=& \{x \in \B: \; \forall \epsilon>0 \; \exists w \in \B \text{ and } t>0
   \text { such that  } \| w\|< \epsilon \\ & & \text { and } \|T_tw - x\|< \epsilon \}; \\
   \B_{Per} &=& \text{ the set of all periodic points of } T_t .
 \end{eqnarray*}

\begin{thm}[see \cite{DS}, Theorem 2.3]\label{T11} Let $\B$ be a separable Banach space at let
$T_t$ be a strongly continuous semigroup on $\B$. If both $\B_0$
and $\B_{\infty}$ are dense in $\B$ then $T_t$ is hypercyclic.
\end{thm}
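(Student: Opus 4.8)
The plan is to prove the statement by the classical two-step route to hypercyclicity on a separable space: first reduce to topological transitivity of $T_t$ by a Baire category argument, and then establish transitivity by a ``sandwich'' construction that uses the density of $\B_0$ and of $\B_\infty$ simultaneously, in the spirit of the hypercyclicity criterion (cf. \cite{DS}).

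For the reduction, I would fix a countable base $\{V_k\}_{k\in\Na}$ of nonempty open subsets of $\B$, which exists because $\B$ is separable, and observe that the set of hypercyclic vectors is exactly $\bigcap_k\bigcup_{t\ge0}T_t^{-1}(V_k)$: a vector has dense orbit if and only if its orbit meets every $V_k$. Each $T_t^{-1}(V_k)$ is open because $T_t$ is a bounded (hence continuous) operator, so $\bigcup_{t\ge0}T_t^{-1}(V_k)$ is open and the set of hypercyclic vectors is a $G_\delta$. Since $\B$ is a complete metric space, the Baire category theorem shows this $G_\delta$ is dense --- in particular nonempty, which is all that is required --- provided each $\bigcup_{t\ge0}T_t^{-1}(V_k)$ is dense; and this density is precisely the assertion that for every nonempty open $U\subseteq\B$ and every $k$ there is $t\ge0$ with $T_tU\cap V_k\neq\emptyset$, i.e. topological transitivity.

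For transitivity, let $U,V\subseteq\B$ be nonempty open. Using density of $\B_0$ pick $u\in U\cap\B_0$, using density of $\B_\infty$ pick $v\in V\cap\B_\infty$ with $v\neq0$ (possible since $V\setminus\{0\}$ is nonempty and open), and fix $r>0$ with $B(u,r)\subseteq U$ and $B(v,r)\subseteq V$. Since $u\in\B_0$ there is $R>0$ with $\|T_tu\|<r/2$ for all $t\ge R$. Now apply the defining property of $\B_\infty$ to $v$ with a small parameter $\delta>0$: this yields $w\in\B$ with $\|w\|<\delta$ and some $t>0$ with $\|T_tw-v\|<\delta$. If $\delta<\|v\|$ then $w\neq0$, so $\|T_t\|\ge\|T_tw\|/\|w\|>(\|v\|-\delta)/\delta$; since every strongly continuous semigroup obeys $\|T_s\|\le Me^{\omega s}$ for some $M>0$ and (enlarging $\omega$ if needed) $\omega>0$, this forces $t\ge\omega^{-1}\log\big((\|v\|-\delta)/(M\delta)\big)$, a quantity tending to $\infty$ as $\delta\to0$. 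Choosing $\delta$ small enough that simultaneously $\delta\le r/2$, $\delta<\|v\|$, and the last lower bound exceeds $R$, I obtain $w$ with $\|w\|<r/2$ and a time $t\ge R$ with $\|T_tw-v\|<r/2$. Setting $x:=u+w$ gives $\|x-u\|<r$, so $x\in U$, while $\|T_tx-v\|\le\|T_tu\|+\|T_tw-v\|<r/2+r/2=r$, so $T_tx\in V$; hence $T_tU\cap V\neq\emptyset$ and transitivity follows.

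I expect the main obstacle to be exactly this synchronization of the time $t$ supplied by the definition of $\B_\infty$ with the tail time $R$ beyond which $\|T_tu\|$ is small: a priori the $t$ coming from $\B_\infty$ could be arbitrarily small, in which case $\|T_tu\|$ is not small. The remedy above is the key observation that an approximant $w$ of very small norm necessarily corresponds to a large time, because then $\|T_t\|$ must be large while $C_0$-semigroups grow at most exponentially. As a consistency check, the hypotheses already exclude a bounded semigroup: if $\|T_s\|\le M$ for all $s$, then for $x\in\B_\infty$ with $x\neq0$ and $\epsilon:=\|x\|/(2(M+1))$ the definition of $\B_\infty$ would give $\|x\|\le\|T_tw-x\|+\|T_tw\|<\epsilon+M\epsilon<\|x\|$, a contradiction --- so $\B_\infty=\{0\}$ would fail to be dense, consistent with the general fact that a hypercyclic semigroup is never bounded.
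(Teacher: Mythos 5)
The paper itself offers no proof of this statement: Theorem \ref{T11} is imported verbatim from Desch--Schappacher--Webb \cite{DS} (their Theorem 2.3), so there is no internal argument to compare yours against; your proposal has to be judged on its own, and it is correct and complete. The Birkhoff-type reduction is standard: with a countable base $\{V_k\}$ the hypercyclic vectors form the $G_\delta$ set $\bigcap_k\bigcup_{t\ge0}T_t^{-1}(V_k)$, each union is open since every $T_t$ is continuous, and Baire gives density once topological transitivity is established. The genuinely delicate point is the one you isolate yourself: the time $t$ supplied by the definition of $\B_{\infty}$ at the target $v$ need not be large enough for $\|T_tu\|$ to be small, and your remedy is sound --- since $\delta<\|v\|$ forces $w\neq0$, one gets $\|T_t\|\ge\|T_tw\|/\|w\|>(\|v\|-\delta)/\delta$, while the $C_0$-property gives $\|T_t\|\le Me^{\omega t}$ with $M\ge1$ and (harmlessly) $\omega>0$, so $t>\omega^{-1}\log\bigl((\|v\|-\delta)/(M\delta)\bigr)\to\infty$ as $\delta\to0$, and choosing $\delta\le r/2$, $\delta<\|v\|$ with this lower bound at least $R$ makes $x=u+w$ land in $U$ with $T_tx\in V$. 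This synchronization device is essentially the same mechanism that underlies the proof in \cite{DS} and the continuous-time hypercyclicity criterion, so nothing further is needed; your closing consistency check (a bounded semigroup would force $\B_{\infty}=\{0\}$) is a nice sanity remark but not required by the argument.
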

\begin{cor}\label{C1} Let $\B$ and $T_t$ be as above. If all $\B_0$, $\B_{\infty}$ and
$\B_{Per}$ are dense in $\B$ then $T_t$ is chaotic.
\end{cor}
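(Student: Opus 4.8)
The statement is an immediate consequence of Theorem \ref{T11} together with the definition of a chaotic semigroup recalled in the introduction. The plan is as follows. First I would invoke Theorem \ref{T11}: since $\B$ is a separable Banach space, $T_t$ is a strongly continuous semigroup on $\B$, and both $\B_0$ and $\B_\infty$ are assumed dense in $\B$, that theorem yields directly that $T_t$ is hypercyclic. This takes care of the first of the two defining properties of chaos.

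Next, recall from item (3) of the list of definitions in the introduction that a strongly continuous semigroup is chaotic precisely when it is hypercyclic \emph{and} its set of periodic points is dense in $\B$. The set of periodic points of $T_t$ is exactly $\B_{Per}$ by definition, and this set is dense in $\B$ by hypothesis. Combining this with the hypercyclicity obtained in the previous step, both conditions in the definition of chaos are met, so $T_t$ is chaotic, completing the proof.

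Since the only nontrivial input is Theorem \ref{T11} (quoted from \cite{DS}), there is essentially no obstacle here: the corollary is a bookkeeping statement that packages the hypercyclicity criterion with the periodicity requirement built into the definition of chaos. If one wanted to be fully self-contained one could instead reprove Theorem \ref{T11} from scratch (e.g.\ via a Baire-category/Birkhoff transitivity argument producing a common hypercyclic vector from the dense sets $\B_0$ and $\B_\infty$), but assuming that theorem as stated, the argument above suffices.
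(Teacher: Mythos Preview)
Your proposal is correct and matches the paper's approach: the corollary is stated without proof as an immediate consequence of Theorem~\ref{T11} and the definition of chaos, which is exactly what you do. There is nothing to add.
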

Once we have the above Theorem \ref{T11} and Corollary \ref{C1},
the sufficiency part of the Theorem \ref{T4} follows immediately
from the next proposition. Recall that we are considering the
Banach spaces $ L^p_\rho(\R^n) $ and the semigroup $ T_t^c =
e^{-t(\Delta+\rho^2-c)}.$

\begin{prop}\label{P1} For each $1\leq j \leq 3 $,  $\mathcal{A}_j$ is dense in $L^p_{\rho}(\R^n)$,
$1 \leq  p<\infty,~~ p\neq 2 $ and $span(\mathcal{A}_1)\subset
\B_0$, $span(\mathcal{A}_2)\subset \B_\infty$ and
$span(\mathcal{A}_3)\subset \B_{Per}$ provided $c>c_p$.
\end{prop}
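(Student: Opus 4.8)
The plan is to establish the three inclusions $\mathrm{span}(\mathcal{A}_1)\subset\B_0$, $\mathrm{span}(\mathcal{A}_2)\subset\B_\infty$, $\mathrm{span}(\mathcal{A}_3)\subset\B_{Per}$ by direct computation from the eigenfunction relation, and then prove that each $\mathcal{A}_j$ is dense in $L^p_\rho(\R^n)$ via a Hahn--Banach / annihilator argument combined with analytic continuation. For the inclusions: if $f\in\mathcal{A}_j$, say $f=\tau_x\varphi_\la$ with $\la\in A_j$, then $T_t^c f = e^{-t\,\om_c(\la)}f$, so the norm of $T_t^c f$ in $L^p_\rho(\R^n)$ is $e^{-t\,\Re(\om_c(\la))}\|f\|$ times a possible polynomial factor coming from Theorem~\ref{T7}; actually since $f$ is an exact eigenfunction the growth is purely exponential. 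For $\la\in A_1$ we have $\Re(\om_c(\la))>0$, hence $T_t^c f\to 0$ as $t\to\infty$, giving $f\in\B_0$ (and $\B_0$ is a subspace, so $\mathrm{span}(\mathcal{A}_1)\subset\B_0$). For $\la\in A_2$, $\Re(\om_c(\la))<0$; given $\eps>0$ choose $t$ large and set $w = e^{t\,\om_c(\la)}f$, so $\|w\|=e^{t\,\Re(\om_c(\la))}\|f\|<\eps$ for $t$ large while $T_t^c w = f$ exactly, so $f\in\B_\infty$. For $\la\in A_3$, $\om_c(\la)\in i\mathbb{Q}$, say $\om_c(\la)=i\,a/b$ with $a,b\in\Z$, $b\neq 0$; then $T_{2\pi b}^c f = e^{-2\pi i a}f = f$, so $f$ is periodic. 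Subspaces: $\B_0$ and $\B_{Per}$ are genuinely linear subspaces; $\B_\infty$ is as well (if $w_1,w_2$ witness $x_1,x_2$ at scale $\eps/2$ with the \emph{same} time $t$, then $w_1+w_2$ witnesses $x_1+x_2$ — and one can always enlarge the smaller time by iterating, or more simply use that for eigenfunctions the witness time can be taken arbitrarily large). These steps are routine.

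\medskip

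\emph{The main obstacle is the density of $\mathcal{A}_j$ in $L^p_\rho(\R^n)$ for $p\neq 2$.} Here I would argue as follows. By the duality identification recalled before Theorem~\ref{T7}, a continuous linear functional on $L^p_\rho(\R^n)$ is given by integration against some $g\in L^{p'}(\R^n,(\varphi_{i\rho})^{p'\gm_{p'}}dx)$ (or $g\in L^\infty$ when $p=1$). Suppose $g$ annihilates $\mathcal{A}_j$, i.e.
\[
\int_{\R^n} \tau_x\varphi_\la(y)\, g(y)\, dy = 0 \qquad \text{for all } x\in\R^n,\ \la\in A_j.
\]
Using $\tau_x\varphi_\la(y)=\int_{\sn}e^{i\la(x+y)\cdot\om}\,d\si(\om)$ and Fubini, this says that the function $x\mapsto \int_{\sn} e^{i\la x\cdot\om}\big(\int_{\R^n} e^{i\la y\cdot\om} g(y)\,dy\big) d\si(\om)$ vanishes identically. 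I would fix $x$ and view the integral as a superposition over the sphere of the Fourier--type transforms of $g$ evaluated at the points $\la\om$; since $\la$ ranges over an open set $A_j\subset\C$ (for $j=1,2$) or over an infinite set with accumulation points for $j=3$, and the dependence on $\la$ is holomorphic (the exponential weight in the definition of $L^p_\rho$ guarantees the $y$-integral converges and is analytic in $\la$ on $\Lambda_p^0$), analytic continuation forces the vanishing for all $\la\in\Lambda_p^0$, and in particular along a real segment where $\varphi_\la$ is the ordinary Bessel function. At that point the vanishing of $\int_{\R^n}\tau_x\varphi_\la(y)g(y)\,dy$ for all real $\la$ in an interval and all $x$, together with injectivity properties of the Fourier transform on $\R^n$, forces $g=0$. (Concretely: $\int \tau_x\varphi_\la(y)g(y)dy = \varphi_\la * g$ evaluated appropriately, and $\widehat{g}$ must vanish on spheres of radius $\la$ for $\la$ in an interval, hence on an open set, hence $\widehat g\equiv 0$ by analyticity of $\widehat g$ since $g$ has exponential decay built into membership in its space.)

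\medskip

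The delicate point in the density argument is justifying the interchange of integration and the holomorphy in $\la$: this rests precisely on the weight $(\varphi_{i\rho})^{p'\gm_{p'}}$ against which $g$ is integrable, which provides just enough exponential decay of $g$ (in the averaged sense of belonging to that $L^{p'}$ space, upgraded via H\"older against the polynomial growth of $\tau_x\varphi_\la$) to make $\la\mapsto\int e^{i\la y\cdot\om}g(y)\,dy$ holomorphic on $\Omega_{\gm_{p'}\rho}=\Omega_{\gm_p\rho}$, which is exactly the strip containing $\Lambda_p^0$. I would therefore carry out the steps in this order: (1) record that $T_t^c$ acts on $\mathcal{A}_j$ by the scalar $e^{-t\om_c(\la)}$; (2) deduce the three span inclusions from the sign of $\Re(\om_c(\la))$ and the rationality of $\IM(\om_c(\la))$ on $A_3$, using that $A_3$ is infinite by Lemma~4.1 of \cite{PS}; (3) set up the annihilator equation via duality; (4) establish holomorphy in $\la$ and invoke analytic continuation to reduce to real $\la$; (5) conclude $g=0$ from Fourier uniqueness, hence density. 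Step (4)--(5) is where the real work lies; steps (1)--(3) are essentially bookkeeping.
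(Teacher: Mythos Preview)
Your plan is correct and matches the paper's proof almost exactly: eigenfunction calculus for the three inclusions (the paper handles $\mathrm{span}(\mathcal{A}_2)$ by taking $f_t=\sum a_k e^{t\om_c(\la_k)}\tau_{x_k}\varphi_{\la_k}$ directly, which is your synchronization argument made explicit), then Hahn--Banach, holomorphy in $\la$ via Morera/Fubini, analytic continuation, and Fourier uniqueness.

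One point to tighten in step~(5). Analytic continuation from $A_j$ (open for $j=1,2$, with accumulation points for $j=3$) gives vanishing on the \emph{entire} strip $\Lambda_p^0$, which contains all of $\R$, not just a real segment. Hence $g*\varphi_\la\equiv 0$ for every real $\la>0$, and the paper simply invokes polar Fourier inversion
\[
g(x)=\int_0^\infty g*\varphi_\la(x)\,\la^{n-1}\,d\la=0,
\]
using that $L^{p'}(\R^n,(\varphi_{i\rho})^{p'\gm_{p'}}dx)\subset L^1(\R^n)$ (by H\"older). Your appeal to analyticity of $\widehat g$ is unnecessary and also not quite justified: membership in the dual weighted space puts $g$ in $L^1$ but gives no pointwise exponential decay, so $\widehat g$ is merely continuous. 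Since you already have vanishing for all real $\la$, this causes no trouble.
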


\begin{proof} We first prove the set inclusions $span(\mathcal{A}_1)\subset \B_0$,
$span(\mathcal{A}_2)\subset \B_\infty$ and
$span(\mathcal{A}_3)\subset \B_{Per}$. When $\la \in A_1$,
$\Re(\om_c(\la))>0$ and hence for any $f \in \:
span(\mathcal{A}_1)$, $T_t^cf = e^{-t\om_c(\la)}f $ from which it is
clear that $\lim_{t\rightarrow \infty}T_t^cf=0$. If $g \in \:
span(\mathcal{A}_2)$, then $g=\sum_{k=1}^m
a_k\tau_{x_k}\varphi_{\la_{k}}$ with $\la_k \in A_2$.
Consequently, if we define $f_t =\sum_{k=1}^m
a_ke^{t\om_c(\la_{k})} \tau_{x_k}\varphi_{\la_{k}}$ then it
follows that $T_t^cf_t = g$ for any $t>0$. Since
$\Re(\om_c(\la_k))<0$, for any $\epsilon>0$ we can choose $t$
large enough so that $\|f_t\|_{L^p_{\rho}(\R^n)} < \epsilon $. On
the other hand $\|T_t^cf_t -g \|_{L^p_{\rho}(\R^n)}=0 $ and hence $g
\in \B_{\infty}$. The proof of third inclusion is also easy. As it
is similar to the case of symmetric spaces we leave the proof and
refer to \cite{PS}. Now we need to prove the density of $\B_0,\;
\B_\infty $ and $\B_{Per}$ which will follow once we prove that of
$\mathcal{A}_j$ in $L^p_{\rho}(\R^n)$. Suppose the span of
$\mathcal{A}_1$ is not dense in $L^p_{\rho}(\R^n)$, $1< p<\infty
$. As the dual of $L^p_{\rho}(\R^n) $ can be identified with
$L^{p'}(\R^n, (\varphi_{i\rho}(x))^{p'\gm_{p'}}dx )$ (where
$1/p+1/p'=1$), there exists $g\in L^{p'}(\R^n,
(\varphi_{i\rho}(x))^{p'\gm_{p'}}dx ) $ such that
$$\int_{\R^n}g(y)\tau_x\varphi_{\la}(y)dy =0$$
for all $\la \in A_1 $ and $x \in \R^n $. Note that $L^{p'}(\R^n,
(\varphi_{i\rho}(x))^{p'\gm_{p'}}dx ) \subset L^1(\R^n)$ for
$1<p'<\infty $ and hence the map
$$\la \mapsto \int_{\R^n}g(y)\tau_x\varphi_{\la}(y)dy $$
is a continuous function on $\Lambda_p^0$. Moreover, by Morera and Fubini,
the map is holomorphic. Since $A_1$ is a nonempty open subset of
$\Lambda_p^0$ it follows that
$$\int_{\R^n}g(y)\tau_x\varphi_{\la}(y)dy=0 $$
for all $\la \in \Lambda_p$; in particular, for all $\la \in \R$,
$x \in \R^n $, we have
$g\ast\varphi_{\la}(x)=\int_{\R^n}g(y)\tau_{-x}\varphi_{-\la}(y)dy
=0 $. In view of Fourier inversion formula, we have
$$g(x)= \int_0^\infty g\ast \varphi_{\la}(x)\la^{n-1}d\la=0.$$
 When $ p = 1 $ we have a bounded function $ g_1 $ such that
$$\int_{\R^n}g_1(y)\tau_x\varphi_{\la}(y) \tilde{K}_{n/2}(y) dy=0 $$
for all $\la \in \Lambda_p.$  Since the function $ g(y) = g_1(y)
\tilde{K}_{n/2}(y) $ belongs to $ L^1(\R^n) $ we can conclude that
$ g_1 = 0 $ as before. This proves the density of span of $
\mathcal{A}_1$. The density of span of $\mathcal{A}_2$ and
$\mathcal{A}_3$ are similarly proved.
\end{proof}

\noindent
 {\bf Proof of the Theorem \ref{T4}:}
For $1\leq p < \infty$ and $c\in \R$
 the semigroup $T_t^c=e^{-t(\D+\rho^2-c)}$ is strongly continuous on $L^p_{\rho}(\R^n).$ Therefore,
 in view of the Corollary \ref{C1} and Proposition \ref{P1}, $T_t^c$ is chaotic on
 $L^p_{\rho}(\R^n)$ for $c>c_p$ and $1\leq p<\infty,~p\neq 2 $.
 This proves the part (1) of Theorem \ref{T4}. Now we proceed to prove
 part (2), We note that for any $c\in \R$, the semigroup $T_t^c=e^{-t(\D+\rho^2-c)}$ is
 not hypercyclic on $L^{\infty}_{\rho}(\R^n)$, since
 for any $f \in L^{\infty}_{\rho}(\R^n)$, $T_t^c f $ is a continuous bounded
 function and hence the closure of the orbit $\{ T_t^cf: t>0 \} $ in
 $L^{\infty}_{\rho}(\R^n)$ is a subset of the subspace of all
 continuous bounded functions which is a strictly contained in $L^{\infty}_{\rho}(\R^n)$.
 This proves part (2) of Theorem \ref{T4}.
 For  part (3), we make use  of Theorem \ref{T8} according to which  the spectrum $ \si_p(\D+\rho^2-c) $ of the operator
 $(\D+\rho^2-c) $ on $L^p_{\rho}(\R^n) $ is given by
 $$ \mathfrak{P}_p -c =\{ \la^2+\rho^2-c: | \Im(\la)|\leq \gm_p \rho \}$$  for $1\leq p < \infty .$
 By the geometric form of the above set, it can be easily seen that
 the set $\si_p(\D+\rho^2-c)\cap i\R  $ has at most one point for $c\leq c_p$ and
 hence in view of Theorem \ref{T1}, $T_t^c$ is not chaotic.
 If $c<c_p$, the operators $T_t^c$ are uniformly bounded in $t$, as they satisfy the  estimates
 \begin{eqnarray*}
   \|T_t^cf(x)\|_{L^p_{\rho}(\R^n)}\leq C (1+t)^{\frac{n-1}{2}(1+\gm_p)} e^{-t(c_p-c)}
 \|f(x)\|_{L^p_{\rho}(\R^n)}.
 \end{eqnarray*}
for all $ f \in L^p_\rho(\R^n)$ and $1 \leq p < \infty .$
Consequently, for each $ f \in L^p_\rho(\R^n)$ and $c<c_p$ the
orbit $\{T_t^cf: t>0 \}$ is a bounded subset of $ L^p_\rho(\R^n)$
and hence it cannot be dense in $ L^p_\rho(\R^n)$. This proves that
$T_t^c$ is not hypercyclic on $ L^p_\rho(\R^n)$ for $c<c_p$ and $1
\leq p < \infty $ which completes the Theorem \ref{T4}.\\

\noindent
 {\bf Proof of the Theorem \ref{T3}:}   Parts (a) and (b) of Theorem \ref{T3} are
 restatements of Theorem \ref{T4}. Indeed, as we have already noted the chaotic
 behaviour of $ T_t^c $ on $L^p_\rho(\R^n) $ is the same as that of
 $ \tilde{T}_t^c $ on $  L^p(\R^n, (\varphi_{i\rho}(x))^2 dx) $
 as long as $ 2 < p < \infty.$  Thus Theorem \ref{T7} leads to the estimate
$$  \|\tilde{T}_tf(x)\|_{L^p(\R^n,(\varphi_{i\rho}(x))^2 dx)}$$
$$ \leq C (1+t)^{\frac{n-1}{2}(1+\gm_p)} e^{-\frac{4\rho^2}{p p'}t}
 \|f(x)\|_{L^p(\R^n, (\varphi_{i\rho}(x))^2 dx)}.$$
Similarly, from Theorem \ref{T8} it follows that the spectrum of $
\Delta+\rho^2 $ on $ L^p_\rho(\R^n) $ is the same as the spectrum
of $ \tilde{\Delta}+\rho^2 $ on $ L^p(\R^n,(\varphi_{i\rho}(x))^2
dx )$ for all $ p > 2.$  Thus the proof of parts (a) and (b) of
Theorem \ref{T3} is almost the same as that of Theorem \ref{T4}.

In order to treat the case $ 1 \leq p \leq 2 $ we make
use of the following theorem proved in \cite{DS} (see Theorem 2.5 in \cite{PS}).

\begin{thm}\label{T12}
Let $T_t$ be a hypercyclic semigroup generated by $A $ in a Banach
space $ B.$ Then the adjoint  $A^*$  of $A$  and the dual
semigroup $T_t^*$ on the dual space $B^*$  have the following
properties: (a) The point spectrum $ \sigma_{pt}(A^*)$  of $A^*$
is empty. (b) For any nonzero $ \phi \in B^*$, the orbit $
\{T_t^*\phi: t> 0\} $ is unbounded.
\end{thm}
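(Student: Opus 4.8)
The plan is to reduce everything to one elementary fact: a nonzero bounded linear functional $\phi$ on $B$ is surjective onto the scalar field and continuous, so it carries dense subsets of $B$ to dense — hence unbounded — subsets of $\C$. Applying this to the orbit of a hypercyclic vector will immediately pin down the scalarized orbit $t\mapsto \phi(T_t v)$ and force both conclusions.

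First I would fix a hypercyclic vector $v\in B$, so that $\{T_t v: t\ge 0\}$ is dense in $B$, and record that for any nonzero $\phi\in B^*$ the set $\{\phi(T_t v): t\ge 0\}$ is then dense in $\C$, in particular unbounded. For part (b), suppose to the contrary that the orbit $\{T_t^*\phi: t>0\}$ is bounded, say $\|T_t^*\phi\|\le M$ for all $t>0$. Using the adjoint relation $\phi(T_t v)=(T_t^*\phi)(v)$ one gets $|\phi(T_t v)|\le M\|v\|$ for $t>0$ (and $|\phi(v)|\le \|\phi\|\,\|v\|$ at $t=0$), so the scalarized orbit is bounded, contradicting the previous observation. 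Hence $\{T_t^*\phi: t>0\}$ is unbounded for every nonzero $\phi$.

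For part (a), suppose $\phi\in D(A^*)$ is a nonzero eigenvector, $A^*\phi=\mu\phi$. For $v\in D(A)$ one has $T_t v\in D(A)$ and $\frac{d}{dt}\phi(T_t v)=\phi(AT_t v)=(A^*\phi)(T_t v)=\mu\,\phi(T_t v)$, whence $\phi(T_t v)=e^{t\mu}\phi(v)$; by density of $D(A)$ in $B$ and boundedness of $\phi$ this identity holds for every $v\in B$. Now take $v$ hypercyclic. Then $\{e^{t\mu}\phi(v): t\ge 0\}$ must be dense in the scalar field; in particular $\phi(v)\neq 0$, since otherwise this set is $\{0\}$. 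But for a fixed nonzero scalar $c$ the curve $t\mapsto e^{t\mu}c$ has modulus $|c|e^{t\Re\mu}$, which is monotone in $t$, so the curve stays outside a fixed disk when $\Re\mu\ge 0$ and inside a fixed disk (spiralling toward the origin) when $\Re\mu<0$; in the real case it is a half-line or a single point. In no case is it dense. This contradiction shows $\sigma_{pt}(A^*)=\emptyset$.

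I do not expect a serious obstacle; the argument is soft. The only points needing mild care are the identification of $A^*$ as the Banach-space adjoint together with the relation $\frac{d}{dt}T_t v=AT_t v$ on the (dense) domain $D(A)$, the passage from $D(A)$ to all of $B$ by density and continuity, and the elementary geometric claim that an exponential curve is never dense in the scalar field — the borderline subcase being $\Re\mu=0$, where one must observe that the curve is confined to a circle (or, for real $B$, that $\mu=0$ yields a single point). One should also note that hypercyclicity is precisely what guarantees that a hypercyclic vector $v$ exists, and that the dual semigroup $T_t^*$ need not be strongly continuous on a non-reflexive $B$, but this plays no role above.
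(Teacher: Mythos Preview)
Your argument is correct and is the standard proof of this result. The paper itself does not prove Theorem~\ref{T12}; it is quoted from Desch--Schappacher--Webb \cite{DS} (see also \cite{PS}) as a tool for establishing part~(c) of Theorem~\ref{T3}, so there is no in-paper proof to compare against.
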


Recalling the definition of $ \tilde{\Delta} $ we see that
$$ \int_{\R^n} \tilde{\Delta}f(x) g(x) (\varphi_{i\rho}(x))^2 dx =
\int_{\R^n} f(x)  \tilde{\Delta}g(x)  (\varphi_{i\rho}(x))^2 dx $$
which means that $ \tilde{\Delta}+\rho^2 $ is selfadjoint. If $
\tilde{T}_t^c $ were chaotic on $ L^p(\R^n,(\varphi_{i\rho}(x))^2
dx) $  for $ 1 \leq p < 2$, then the point spectrum of $
\tilde{\Delta}+\rho^2 $ on $ L^{p'} $ should be empty. But this is
not the case as $ \frac{\varphi_\lambda} {\varphi_{i\rho}} $ with
$ |\Im(\lambda)| < p' \gamma_{p'} $
belongs to $ L^{p'}(\R^n,(\varphi_{i\rho}(x))^2 dx) $ is an
eigenfunction of $ \tilde{\Delta}+\rho^2.$  For $ p = 2,$ the
behaviour of $ e^{-t(\tilde{\D}+\rho^2-c)} $ on $ L^2(\R^n, (\varphi_{i\rho}(x))^2 dx)$
is equivalent to the behaviour of $ e^{-t(\Delta+\rho^2-c)} $ on $ L^2(\R^n,dx).$
In \cite{PS} the authors have studied the latter semigroup and hence our results follow from theirs.\\

In order to show that $ \tilde{T}_t^c $ has no periodic points in
$L^{p}(\R^n,(\varphi_{i\rho}(x))^2 dx) $ for $ 1 \leq p \leq 2,$
assume, on the contrary that there is a nontrivial  $ f \in
L^{p}(\R^n,(\varphi_{i\rho}(x))^2 dx) $ such that $ \tilde{T}_t^c
f = f $ for some $ t = t_0 > 0.$ This means that $ g = f \varphi_{i\rho}
$ which belongs to $ L^{p}(\R^n,(\varphi_{i\rho}(x))^{2-p} dx) $
is a periodic point for  $ e^{-t(\Delta+\rho^2 -c)}$: that is,
$  e^{-t_0(\Delta+\rho^2 -c)}g = g.$ Since we are in the
case $ 1 \leq p \leq 2,   L^{p}(\R^n,(\varphi_{i\rho}(x))^{2-p}
dx) \subset  L^{1}(\R^n, dx) $ and hence by taking Fourier
transform we obtain $ (1-e^{-t_0\omega_c(\lambda)}) \hat{g}(\lambda
\omega) = 0 $ for all $ \lambda > 0 $ and $ \omega \in S^{n-1}.$
But then $ \hat{g}(\xi) = 0 $ for a.e. $ \xi \in \R^n $ which is a
contradiction. This completes the proof of Theorem \ref{T3}.

%--------------------------------------------------------------------------

\section{Chaotic behavior of the Dunkl heat semigroup\\
on weighted $L^p$ spaces}

%-----------------------------------------------------------------------------------

 \subsection{Coxeter groups and Dunkl operators:} In this subsection
 we recall some definitions given in Introduction and we give some more
 preliminaries about Dunkl theory.  Let $G$ be a Coxeter group
 (finite reflection group) associated to
  a fixed root system $R$ in $\R^n, n \geq 2$. We use the notation $\La . , . \Ra $
 for the standard inner product on $\R^n $ and $|x|^2 = \La x , x \Ra$. We assume that the
 reader is familiar with the notion of finite reflection groups
 associated to root systems. Given a root system $R$  we define
 the reflections $ \si_{\nu}$, $\nu \in R $ by
 $$\si_{\nu}x= x - 2 \; \frac{\La \nu, x \Ra}{|\nu |^2} \nu .$$
 Then $G$ is a subgroup of the orthogonal group generated by the reflections
 $\si_{\nu} $, $\nu \in R $. A function $\K $ defined on $R$ is called a
 multiplicity function if it satisfies $\K(g\nu)=\K(\nu)$ for every $g\in G$.
 We assume that our multiplicity function $\K $ is non negative.\\

 In \cite{DU} Dunkl defined a family of first order differential-difference
 operators $T_j$ (which we call Dunkl operators) that play the role of
 partial differentiation for the reflection group structure.
 Dunkl operators $T_j$ are defined by
 $$T_jf(x) = \frac{\pa }{\pa x_j}f(x) +\sum_{\nu \in R_+}\K(\nu)\nu _j
 \frac{f(x)-f( \si_{\nu}x)}{\La \nu, x \Ra}$$
 for $j=1,2,\cdots ,n $, where $\nu = (\nu_1, \nu_2, \cdots, \nu_n) $ and $R_+$ is the
 set of all positive roots in $R$. These operators map $\mathcal{P}^n_{m}$ to $\mathcal{P}^n_{m-1}$,
 where $\mathcal{P}^n_{m}$ is the space of homogeneous polynomials of degree $m$
 in $n$ variables. More importantly, these operators mutually
 commute; that is $T_iT_j=T_jT_i $.\\

   Recall that the Dunkl-Laplacian $\Dk $ is defined to be the operator
 $$ \Dk = -\sum _{j=1} ^d T_j ^2$$
 which can be explicitly calculated, see Theorem 4.4.9 in Dunkl-Xu
 \cite{DUX}. The Dunkl Laplacian reduces to the standard Laplacian $\Dk  = \D $ when $\K
 =0$. For all these facts we refer to Dunkl
 \cite{DU} and Dunkl-Xu \cite{DUX}. The  weight function $h_{\K}^2$
 associated to the group $ G $ and the multiplicity function  $\K$
 is defined by
 $$h^2_{\K}(x)=\prod_{\nu \in R_+}|\La x, \nu \Ra |^{2\K(\nu)}, \; \; x \in \R^n. $$
 Note that $h^2_{\K}(x) $ is a positive homogeneous function of degree $2\gm $ where
 $\gm = \sum_{\nu \in R_+}\K(\nu)$.  We consider $L^p $ spaces defined with
 respect to the measure $h^2_{\K}(x)dx $.
 There exists a kernel $E_{\K}(x,\; \xi) $
 which is a joint eigenfunction for all $T_j $:
 $$T_jE_{\K}(x,\; \xi)=\xi _j E_{\K}(x,\; \xi). $$
 This is the analogue of the exponential $e ^{\La x,\; \xi \Ra} $ and Dunkl transform
 is defined in terms of $E_{\K}(ix,\; \xi) $.\\

For $ f \in L^1(\R^n, h_\K(x)^2 dx) $ we define the Dunkl
transform of $ f $ by
$$ \mathcal{F}_\K f(\xi) = \int_{\R^n} f(x) E_\K(-ix, \xi) h_\K(x)^2 dx .$$
The Dunkl transform shares many important properties with the Fourier transform.
For example, we have the Plancherel theorem
$$\int_{\R^n} |\mathcal{F}_\K f(\xi) |^2 h_\K(\xi)^2 d\xi = c_n
\int_{\R^n} |f(x)|^2  h_\K(x)^2 dx $$ for all $ f \in L^1 \cap
L^2(\R^n, h_\K(x)^2 dx) $ and the inversion formula
$$ f(x) = c_n  \int_{\R^n} \mathcal{F}_\K f(\xi) E_\K(ix,\xi)  h_\K(\xi)^2 d\xi $$
for all $ f \in L^1(\R^n, h_\K(x)^2 dx) $ provided $ \mathcal{F}_\K f $ is also
in $L^1(\R^n, h_\K(x)^2 dx).$ In this paper we also make use of
some properties of the Dunkl kernel $ E_\K(x,\xi).$ For example we
require $ E_\K(\lambda x,\xi) =E_\K(x,\lambda \xi) $ for any $
\lambda \in \C $ and also the estimate $ |E_\K(x,\xi)| \leq
e^{|x||\xi|}$ for all $ x, \xi \in \R^n.$ We refer to \cite{DUX}
for all these and more on Dunkl transform.

 %Some properties of
% this kernel are listed below (see \cite{TX}, \cite{RM}).
% \begin{prop}\label{P2}
% For $x,y \in \R^n$, $\la \in \C $ and $g \in G $,
%    (a) $E_{\K}(x,y)=E_{\K}(y,x)$.
%    (b) $E_{\K}(\la x, y)=E_{\K}(x, \la y)$ and $E_{\K}(g x, g y)=E_{\K}(x,
%    y)$.
%    (c) $\overline{E_{\K}( x, y)}=E_{\K}(\bar{x},  \bar{y})$.
%    (d) $|E_{\K}( x, y)|\leq e^{|x|\:|y|}$.
% \end{prop}
% The function $E_{\K}(x, iy)$ plays the role of $e^{i \La x,\; y \Ra} $ in ordinary Fourier
% analysis. The Dunkl transform of a function $f \in L^1(\R^n, h_\K^2(x)dx)$ is defined in terms of it by
%\begin{equation}\label{I}
% \widehat{f}(y)=c_{\K}^{-1}\int_{\R^n}f(x)E_{\K}(x, -iy)h_{\K}^2(x)dx
%\end{equation}
%where $c_{\K}=\int_{\R^n}e^{-|x|^2/2}h_{\K}^2(x)dx $. If $\K=0 $
%then the Dunkl transform coincides with the usual Fourier
%transform. Some of the known properties of Dunkl transform are
%given below (see \cite{DU1}, \cite{DJ}).
%\begin{prop}\label{P3}
%For $f  \in L^1(\R^n, h_\K^2(x)dx) $, (1) $\widehat{f}$ is a
%continuous function vanishing at infinity. If $\widehat{f}$ is
%also in $L^1(\R^n, h_\K^2(x)dx)$ then we have the inversion
%formula
%$$f(x) = \int_{\R^n}\widehat{f}(y)E_{\K}(ix, y)h_{\K}^2(x) dx $$
%(2) The Dunkl transform extends to an isometry of $L^2(\R^n,
%h_\K^2(x)dx) $. (3) For Schwartz class function $f$,
%$\widehat{T_jf}(y)= iy_j\widehat{f}(y)$.
%\end{prop}

%--------------------------------------------------------------------------

\subsection{Dunkl heat semigroup on weighted $ L^p$ spaces}

  In \cite{RM} and \cite{RM1}, R\"{o}sler has studied the heat equation associated to the Dunkl
 Laplacian, viz.
 $$\frac{\pa}{\pa t}u(x,t)=\Dk u(x,t),\;\;u(x,0)=f(x),\; t>0, x \in \R^n .$$
 The solution of this equation, for $f \in L^p(\R^n, h_{\K}^2dx)$ is given by
 $$u(x,t)=\int_{\R^n}\Gamma_{\K}(t,x,y)f(y)h_{\K}^2(y)dy $$
 where $\Gamma_{\K}$ is the heat kernel associated to $\Dk$. The kernel $\Gamma_{\K}$ is
 explicitly known and is given by
 \begin{equation}\label{B}
   \Gamma_{\K}(t,x,y)=\frac{M_{\K}}{t^{n/2+\gm}}e^{-\frac{1}{4t}(|x|^2+|y|^2)}
   E_{\K}\Big (\frac{x}{\sqrt{2t}}, \frac{y}{\sqrt{2t}}\Big ).
\end{equation}
 We collect some important properties of this kernel in the
 following lemma.
 \begin{lem}[R\"{o}sler]\label{L3}$ $
 \begin{enumerate}
    \item $\Gamma_{\K}(t,x,y)=c_{\K}^{-2}\int_{\R^n}e^{-t|\xi|^2}
    E_{\K}(ix, \xi)E_{\K}(-iy, \xi)h^2_{\K}(\xi)d\xi$\\

    \item $\int_{\R^n}\Gamma_{\K}(t,x,y)h_{\K}^2(y)dy=1$\\

    \item  $\Gamma_{\K}(t+s, x, y)=\int_{\R^n}\Gamma_{\K}
    (t, x, z)\Gamma_{\K}(s, y, z)h_{\K}^2(z)dz $.
 \end{enumerate}
 \end{lem}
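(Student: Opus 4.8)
The three identities go back to R\"osler \cite{RM, RM1}; the plan is to establish \emph{(1)} first --- it is the Dunkl analogue of the Fourier-transform formula for a Gaussian --- and then deduce \emph{(2)} and \emph{(3)} from it. The single nontrivial ingredient I will invoke is the Dunkl Gaussian integral (the Macdonald--Mehta identity together with its ``generating-function'' refinement): for $a, b \in \C^n$,
$$\int_{\R^n} e^{-|z|^2/2}\, E_{\K}(z, a)\, E_{\K}(z, b)\, h_{\K}^2(z)\, dz = c_{\K}\, e^{(|a|^2+|b|^2)/2}\, E_{\K}(a, b),$$
where $c_{\K} = \int_{\R^n} e^{-|z|^2/2} h_{\K}^2(z)\, dz$; this is proved first for real $a, b$ and then extended by analytic continuation, see \cite{DUX}. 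Everything else will be changes of variable, Fubini on rapidly decreasing integrands, and the injectivity/inversion of the Dunkl transform established earlier in this section.

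For \emph{(1)}: fix $t > 0$, $y \in \R^n$ and regard $\Gamma_{\K}(t, \cdot, y)$ as a function of $x$. From \eqref{B} and $|E_{\K}(x/\sqrt{2t}, y/\sqrt{2t})| \leq e^{|x||y|/2t}$, together with $|x||y| \leq \tfrac12(|x|^2 + |y|^2)$, this function is dominated by a Gaussian, hence lies in $L^1 \cap L^2(\R^n, h_{\K}^2 dx)$, so its Dunkl transform and the inversion formula apply. I would then compute $\mathcal{F}_{\K}[\Gamma_{\K}(t, \cdot, y)](\xi) = \int_{\R^n} \Gamma_{\K}(t, x, y) E_{\K}(-ix, \xi) h_{\K}^2(x)\, dx$ by substituting $x = \sqrt{2t}\, z$ and using $E_{\K}(\lambda u, v) = E_{\K}(u, \lambda v)$ to bring the integrand to the form $e^{-|z|^2/2}\, E_{\K}(z, y/\sqrt{2t})\, E_{\K}(z, -i\sqrt{2t}\,\xi)$, up to the elementary prefactor $M_{\K} 2^{n/2+\gm} e^{-|y|^2/4t}$. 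Applying the Gaussian integral above with $a = y/\sqrt{2t}$ and $b = -i\sqrt{2t}\,\xi$ collapses the exponential --- $|b|^2/2 = -t|\xi|^2$ while $e^{|a|^2/2}$ cancels the prefactor's Gaussian --- and leaves $E_{\K}(a, b) = E_{\K}(-iy, \xi)$, so $\mathcal{F}_{\K}[\Gamma_{\K}(t, \cdot, y)](\xi) = C\, e^{-t|\xi|^2}\, E_{\K}(-iy, \xi)$ for a constant $C$ which, thanks to R\"osler's choice of $M_{\K}$, makes the inversion formula reproduce exactly the right-hand side of \emph{(1)}.

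For \emph{(2)}: integrate \eqref{B} over $y$, substitute $y = \sqrt{2t}\, z$, and apply the Gaussian integral with $b = 0$ (so $E_{\K}(z, 0) = 1$); this gives $c_{\K}\, e^{|x|^2/4t}$, which cancels the remaining Gaussian in $x$ and leaves $M_{\K} 2^{n/2+\gm} c_{\K}$, equal to $1$ precisely because $M_{\K}$ was normalized that way. For \emph{(3)}: apply the Dunkl transform in the $x$-variable to both sides. By \emph{(1)} the left-hand side becomes $C\, e^{-(t+s)|\xi|^2} E_{\K}(-iy, \xi)$; on the right-hand side Fubini (licit because of the Gaussian domination of $\Gamma_{\K}$ in each variable) moves $\mathcal{F}_{\K}$ inside the $z$-integral, \emph{(1)} turns $\mathcal{F}_{\K}[\Gamma_{\K}(t, \cdot, z)](\xi)$ into $C\, e^{-t|\xi|^2} E_{\K}(-iz, \xi)$, and what is left is $C\, e^{-t|\xi|^2}\, \mathcal{F}_{\K}[\Gamma_{\K}(s, y, \cdot)](\xi) = C^2\, e^{-(t+s)|\xi|^2} E_{\K}(-iy, \xi)$, again by \emph{(1)} and the symmetry $\Gamma_{\K}(s, y, z) = \Gamma_{\K}(s, z, y)$ (which is immediate from $E_{\K}(u, v) = E_{\K}(v, u)$ in \eqref{B}). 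Since the two transforms agree, injectivity of $\mathcal{F}_{\K}$ yields \emph{(3)}; alternatively \emph{(3)} is simply the semigroup identity $e^{-t\Dk} e^{-s\Dk} = e^{-(t+s)\Dk}$ applied to $\delta_y$, but the transform argument sidesteps any appeal to uniqueness for the Dunkl heat equation.

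I expect the only real obstacle to be one of \emph{invoking} the correct identity: once the Dunkl Gaussian integral is available, all three parts are essentially forced. The one place that demands genuine care is tracking the constants $M_{\K}$, $c_{\K}$ and the inversion constant $c_n$ so that the constant $C$ above comes out to exactly what \emph{(1)} asserts; this is routine but must be done consistently with R\"osler's normalization.
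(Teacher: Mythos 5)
The paper offers no proof of this lemma at all --- it is quoted from R\"osler \cite{RM}, \cite{RM1} --- and your sketch is a correct reconstruction of precisely that source's argument: the Gaussian (``master'') formula for $E_\K$ together with the normalization $M_\K=(2^{n/2+\gm}c_\K)^{-1}$ gives (1) and (2), and injectivity of the Dunkl transform then gives (3), so you are taking essentially the same route as the cited proof. One small correction: the estimate $|x||y|\le\tfrac12(|x|^2+|y|^2)$ only shows that $\Gamma_\K(t,\cdot,y)$ is bounded, not Gaussian-dominated; to justify applying the Dunkl transform and inversion, use instead $e^{-\frac{1}{4t}(|x|^2+|y|^2)}\bigl|E_\K\bigl(\tfrac{x}{\sqrt{2t}},\tfrac{y}{\sqrt{2t}}\bigr)\bigr|\le e^{-\frac{1}{4t}(|x|-|y|)^2}$, which does decay in $|x|$ for fixed $y$ and places $\Gamma_\K(t,\cdot,y)$ in $L^1\cap L^2(\R^n,h_\K^2\,dx)$ as needed.
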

 In view of these properties, it is not difficult to show that the
 family of operators $ H_t$, $t>0$ defined on $L^p(\R^n, h_{\K}^2dx)$ by
 $$H_tf(x)=\int_{\R^n}\Gamma_{\K}(t,x,y)f(y)h_{\K}^2(y)dy$$
 forms a strongly continuous semigroup on $L^p(\R^n, h_{\K}^2 dx)$,
 $1\leq p<\infty$. Indeed, this has been proved in \cite{TX}.
 Thus for $f\in L^p(\R^n, h_{\K}^2 dx)$, $1\leq p<\infty$,
 $H_tf$ converges to $f$ in the norm as $t\rightarrow 0$.
 In this article we are interested in the semigroup $T_t$
 generated by $\Dk+\rho^2$ on the spaces $L^p_{\rho,\K}(\R^n)$. Note that
 $T_tf(x)=e^{-t\rho^2}H_tf(x) $ is well defined  even when
 $f\in L^p_{\rho,\K}(\R^n, dx)$
 as the integral defining $H_tf$ converges.\\

 We define spherical functions in the Dunkl set up by
 the equation
 $$\varphi_{\la,\K}(x)=\int_{\sn}E_{\K}(ix, \la \om)h_{\K}^2(\om)d\si(\om)$$
  for $\la\in\C$. These are all eigenfunctions of the Dunkl-Laplacian with eigenvalue $\la^2$;
 $\D_{\K}\varphi_{\la,\K}=\la^2\varphi_{\la,\K}$. For $\la \in \C $, $\varphi_{\la,\K} $ has exponential growth.
 Indeed,
 $$\varphi_{\la,\K}(x)=c_n\frac{J_{\frac{n}{2}+\gm-1}(\la|x|)}{(\la|x|)^{\frac{n}{2}+\gm-1}}$$
 where $J_{\ap}(t)$ is the Bessel function of type $\ap$. It can
 be easily proved that $\varphi_{\la,\K} \in L^p_{\rho,\K}(\R^n)$
 for $|\Im(\la)|< \gm_p \rho $ and $p\neq 2 $.\\

 \begin{thm}\label{T13}
 For each $1\leq p<\infty$, $T_t$, $t>0$ defines a strongly continuous semigroup on
 $L^p_{\rho,\K}(\R^n)$. Moreover, for any $f \in L^p_{\rho,\K}(\R^n)$,
  $1\leq p \leq 2,$
  $$\|T_tf\|_{L^p_{\rho,\K}(\R^n)} \leq C t^{\frac{(n+2\gm-1)}{2}\gm_p}
 e^{-\frac{2\rho^2}{p'}t}\|f\|_{L^p_{\rho,\K}(\R^n)}$$
 whereas for $p>2$
 $$\|T_tf\|_{L^p_{\rho,\K}(\R^n)} \leq C t^{\frac{(n+2\gm-1)}{2}\gm_p}
 e^{-\frac{2\rho^2}{p}t}\|f\|_{L^p_{\rho,\K}(\R^n)}.$$
 \end{thm}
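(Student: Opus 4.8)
The plan is to get the operator bound at the two endpoints $p=1$ and $p=2$ by elementary but quite different arguments, to fill in $1<p<2$ by the Stein--Weiss interpolation theorem with change of measure (this is exactly what forces the weight of $L^p_{\rho,\K}(\R^n)$ to be a power of $\tilde K_{n/2+\gamma}(\rho|\cdot|)$), and to reach $p>2$ by duality. Write $d=n+2\gamma$, so that $L^1_{\rho,\K}(\R^n)=L^1(\tilde K_{d/2}(\rho|\cdot|)\,h_\K^2\,dx)$ (as $\gm_1=1$), $L^2_{\rho,\K}(\R^n)=L^2(h_\K^2\,dx)$ (as $\gm_2=0$), and put $W_1(x)=\tilde K_{d/2}(\rho|x|)$. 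The semigroup law $T_{t+s}=T_tT_s$ is immediate from $T_t=e^{-t\rho^2}H_t$ and the Chapman--Kolmogorov identity of Lemma~\ref{L3}; and, once the operator bound below is available, strong continuity on $L^p_{\rho,\K}(\R^n)$, $1\le p<\infty$, follows in the usual way, since $\{T_t:0<t\le1\}$ is uniformly bounded, the inclusion $L^p(h_\K^2\,dx)\hookrightarrow L^p_{\rho,\K}(\R^n)$ is continuous and dense (the weight $(\tilde K_{d/2}(\rho|\cdot|))^{\gm_p p}$ is bounded), and $H_t$ is strongly continuous on $L^p(h_\K^2\,dx)$ by \cite{TX}. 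At $p=2$ the estimate is spectral: the Dunkl operators being skew-symmetric on $L^2(h_\K^2\,dx)$, $\Delta_\K$ is nonnegative self-adjoint there and $\mathcal{F}_\K(H_tf)(\xi)=e^{-t|\xi|^2}\mathcal{F}_\K f(\xi)$, so by Dunkl--Plancherel $\|H_t\|_{L^2(h_\K^2dx)\to L^2(h_\K^2dx)}=\sup_{\xi}e^{-t|\xi|^2}=1$ and $\|T_t\|_{L^2_{\rho,\K}\to L^2_{\rho,\K}}=e^{-t\rho^2}$.

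The real work is at $p=1$. Since $\Gamma_\K>0$ and $\Gamma_\K(t,x,y)=\Gamma_\K(t,y,x)$, a weighted Schur estimate gives
$$\|T_t\|_{L^1(W_1h_\K^2)\to L^1(W_1h_\K^2)}=e^{-t\rho^2}\sup_{y\in\R^n}\frac{1}{W_1(y)}\int_{\R^n}\Gamma_\K(t,x,y)W_1(x)h_\K^2(x)\,dx=e^{-t\rho^2}\sup_{y\in\R^n}\frac{H_tW_1(y)}{W_1(y)},$$
so it suffices to prove $H_tW_1(y)\le e^{\rho^2t}W_1(y)$. The key is a subordination identity: inserting the Sommerfeld integral for $\tilde K_{d/2}$ stated in the excerpt and using $E_\K(\cdot,0)\equiv1$, i.e. $\Gamma_\K(u,x,0)=M_\K u^{-d/2}e^{-|x|^2/(4u)}$, one finds, with a positive constant $c_\K$,
$$W_1(x)=\tilde K_{d/2}(\rho|x|)=c_\K\int_0^\infty e^{-\rho^2 u}\,\Gamma_\K(u,x,0)\,u^{d-1}\,du .$$
Applying $H_t$ under the integral (all integrands are nonnegative, so Tonelli applies) and using the Chapman--Kolmogorov identity $H_t\big(\Gamma_\K(u,\cdot,0)\big)=\Gamma_\K(t+u,\cdot,0)$, the substitution $v=t+u$ yields
\begin{align*}
H_tW_1(y)&=c_\K\int_0^\infty e^{-\rho^2 u}\Gamma_\K(t+u,y,0)u^{d-1}\,du\\
&=c_\K e^{\rho^2 t}\int_t^\infty e^{-\rho^2 v}\Gamma_\K(v,y,0)(v-t)^{d-1}\,dv\ \le\ e^{\rho^2 t}\,W_1(y),
\end{align*}
because $(v-t)^{d-1}\le v^{d-1}$ for $v\ge t$ and $[t,\infty)\subset(0,\infty)$. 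Hence $\|T_t\|_{L^1_{\rho,\K}\to L^1_{\rho,\K}}\le1$, which is at least as strong as the bound asserted at $p=1$.

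With the two endpoints $\|T_t\|_{L^1_{\rho,\K}}\le1$ and $\|T_t\|_{L^2_{\rho,\K}}=e^{-t\rho^2}$ in hand, the Stein--Weiss interpolation theorem with change of measure (base measure $h_\K^2\,dx$, weights $\tilde K_{d/2}(\rho|\cdot|)$ at $p=1$ and $1$ at $p=2$) produces, at the interpolated exponent $p\in(1,2)$, precisely the weight $\tilde K_{d/2}(\rho|\cdot|)^{2-p}h_\K^2=\tilde K_{d/2}(\rho|\cdot|)^{\gm_p p}h_\K^2$ of $L^p_{\rho,\K}(\R^n)$ and the bound $\|T_t\|_{L^p_{\rho,\K}}\le e^{-2\rho^2 t/p'}$, which yields the assertion for $1\le p\le2$. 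For $p>2$ one notes that, by the symmetry of $\Gamma_\K$, $T_t$ is self-adjoint for the pairing $\langle f,g\rangle=\int_{\R^n}fg\,h_\K^2\,dx$, and that this pairing identifies $(L^p_{\rho,\K}(\R^n))^{*}$ with $L^{p'}_{\rho,\K}(\R^n)$ because $\gm_p=\gm_{p'}$; hence $\|T_t\|_{L^p_{\rho,\K}}=\|T_t\|_{L^{p'}_{\rho,\K}}\le e^{-2\rho^2 t/p}$, which completes the proof. The delicate point is thus entirely in the $p=1$ step: the naive bound $\Gamma_\K(t,x,y)\le M_\K t^{-d/2}e^{-(|x|-|y|)^2/(4t)}$ coming from $|E_\K(u,v)|\le e^{|u||v|}$ discards the angular decay of the Dunkl heat kernel and is too lossy even to keep $\sup_y W_1(y)^{-1}\int_{\R^n}\Gamma_\K(t,x,y)W_1(x)h_\K^2(x)\,dx$ finite, and it is the subordination identity above -- equivalently, one could use the sharp Gaussian estimate in the orbit distance $\min_{g\in G}|x-gy|$ together with the homogeneity of $h_\K^2$ -- that makes the argument go through.
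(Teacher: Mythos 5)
Your skeleton is the same as the paper's (bounds at the endpoints $p=1$ and $p=2$, Stein--Weiss interpolation with change of measure for $1<p<2$, duality for $p>2$, and the identical density argument for strong continuity), and your $p=1$ endpoint is handled by a genuinely different and cleaner device: the subordination identity $\tilde{K}_{N/2}(\rho|x|)=c_\K\int_0^\infty e^{-\rho^2u}\Gamma_{\K}(u,x,0)\,u^{N-1}du$ (with $N=n+2\gm$) together with Chapman--Kolmogorov, giving $H_t\tilde{K}_{N/2}(\rho|\cdot|)\le e^{t\rho^2}\tilde{K}_{N/2}(\rho|\cdot|)$ and hence $\|T_t\|_{L^1_{\rho,\K}\to L^1_{\rho,\K}}\le 1$; I checked the identity and the Schur step, and they are correct, whereas the paper instead estimates the kernel directly in polar coordinates via the Bessel-function formula and a generalized translation. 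The interpolation step for $1\le p\le 2$ then works as you say, and the absence of the factor $t^{\frac{N-1}{2}\gm_p}$ is harmless (your bound is what is actually needed later; a bound vanishing as $t\to0$ is in any case incompatible with strong continuity).

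The genuine gap is the duality step for $p>2$. Under the pairing $\int_{\R^n}f\,g\,h_{\K}^2\,dx$, the dual of $L^p(w^{\gm_p p}h_{\K}^2dx)$ is $L^{p'}(w^{-\gm_{p'}p'}h_{\K}^2dx)$: the identity $\gm_p=\gm_{p'}$ equalizes the exponents but the power of the weight flips sign. Since $\tilde{K}_{N/2}(\rho|\cdot|)$ decays exponentially, the true dual carries an exponentially \emph{growing} weight, comparable by \eqref{J} to $(\varphi_{i\rho,\K})^{\gm_{p'}p'}h_{\K}^2$; it is not $L^{p'}_{\rho,\K}(\R^n)$, so the asserted equality $\|T_t\|_{L^p_{\rho,\K}}=\|T_t\|_{L^{p'}_{\rho,\K}}$ has no justification and your argument proves nothing for $p>2$. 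This is exactly why the paper proves its $p=1$ kernel estimate with \emph{both} weights $w_{\rho,\K}^{\pm1}$ and interpolates on both scales of spaces before dualizing. The gap is repairable in the spirit of your own trick: $\varphi_{i\rho,\K}$ is an eigenfunction of $\Dk$ with eigenvalue $-\rho^2$, and (by R\"osler's formula $\int\Gamma_{\K}(t,x,y)E_{\K}(y,\rho\om)h_{\K}^2(y)dy=e^{t\rho^2}E_{\K}(x,\rho\om)$, integrated over $\sn$) one has $H_t\varphi_{i\rho,\K}=e^{t\rho^2}\varphi_{i\rho,\K}$, hence $\|T_t\|_{L^1(\varphi_{i\rho,\K}h_{\K}^2dx)}\le1$; interpolating this with the $L^2(h_{\K}^2dx)$ bound gives $\|T_t\|\le Ce^{-2\rho^2t/p}$ on $L^{p'}((\varphi_{i\rho,\K})^{\gm_{p'}p'}h_{\K}^2dx)$ for $p'<2$, and only then does duality, with the dual space correctly identified, yield the stated estimate for $p>2$.
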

 \begin{proof}First note that the dual space of
 $L^p_{\rho,\K}(\R^n)$ can be identified with $L^{p'}(\R^n,
 (\varphi_{i\rho,\K}(x))^{p'\gm_{p'}}h_{\K}^2(x)dx)$ and with this
 identification the operator $T_t$ will be self adjoint.
 In view of the asymptotic behaviour of the Macdonald function (and Bessel function),
it is enough to consider the space defined using $
(1+|x|)^{(n+2\gm-1)/2}e^{-\rho |x|}$ (respectively
$(1+|x|)^{-(n+2\gm-1)/2}e^{\rho |x|} $ ) in place of $
\tilde{K}_{n/2+\gm}(\rho |x|)$ (respectively
$(\varphi_{i\rho,\K}(x))^{p'\gm_{p'}}$). For the sake of brevity,
just for this section, we denote the weight function $
(1+|x|)^{(n+2\gm-1)/2}e^{-\rho |x|}$ by $w_{\rho,\K}(x).$\\

 We first consider the case $1\leq p\leq 2$ for which we prove the above
 estimates for the semigroup $T_t$ on both weighted $L^p$-spaces $L^p_{\rho,\K}(\R^n)$ and $L^{p}(\R^n,
 (\varphi_{i\rho,\K}(x))^{p\gm_{p}}h_{\K}^2(x)dx)$. We make use of these estimates
 and duality to prove the required estimates for $p>2$. Since
 $$T_tf(x)=e^{-t\rho^2}\int_{\R^n}\mathcal{F}_\K f(\xi)e^{-t|\xi|^2}E_{\K}(ix, \xi)h_{\K}^2(\xi)d\xi$$
 it is clear that
 $$\|T_tf\|_{2}\leq C e^{-t\rho^2}\|f\|_2, \; \; f \in L^2(\R^n, h_\K^2(x)dx ).$$
 Since $L^2_{\rho,\K}(\R^n)=L^2(\R^n, h_\K^2(x)dx)=(L^2_{\rho,\K}(\R^n))^*$ the
 required estimate is true for $p=2.$ To prove the result for $p=1$
 we recall that
 $$T_tf(x)=e^{-t\rho^2}\int_{\R^n} \Gamma_{\K}(t,x,y)f(y)h_{\K}^2(y)dy$$
 where $\Gamma_{\K}(t,x,y)$ is the heat kernel defined in \eqref{B}. As $\gm_1=1$ we need to
 show that
 $$\sup_{y\in \R^n}(w_{\rho,\K}(y))^{\mp 1}\int_{\R^n}\Gamma_{\K}(t,x,y)(w_{\rho,\K}(x))^{\pm 1}
 h_{\K}^2(x)dx\leq Ct^{(n+2\gm-1)}e^{t\rho^2}.$$
 We consider the case of $L^1_{\rho,\K}(\R^n)$; the treatment of $L^{1}(\R^n,
 \varphi_{i\rho,\K}(x)h_{\K}^2(x)dx)$ is similar.\\

 Since the heat kernel $\Gamma_{\K}(t,x,y)$ satisfies
 $\int_{\R^n}\Gm_{\K}(t,x,y)h_{\K}^2(x)dx=1 $
 we immediately see that
 $$(1+|y|)^{-(n+2\gm-1)/2}e^{\rho|y|}\int_{|x|\geq |y|}\Gm_{\K}(t,x,y)(1+|x|)^{(n+2\gm-1)/2}e^{-\rho|x|}h_{\K}^2(x)dx \leq C.$$
 In order to treat the remaining part of the integral, we make use
 of the explicit expression for $\Gm_{\K}(t,x,y)$, viz.
 $$\Gm_{\K}(t,x,y)=M_{\K}t^{-N/2}e^{-\frac{1}{4t}(|x|^2+|y|^2)}
 E_{\K}\Big (\frac{x}{\sqrt{2t}}, \frac{y}{\sqrt{2t}}\Big ) $$
 where we have written $N=n+2\gm$. Integrating in polar coordinates and
 making use of the formula (see Proposition 2.3 in \cite{TX})
 \begin{equation}\label{C}
   \int_{\sn}E_{\K}(rx', sy')h_{\K}^2(x')d\si(x')=
 c_N\frac{J_{\frac{N}{2}-1}(irs)}{(irs)^{\frac{N}{2}-1}}
\end{equation}
 we need to estimate
 $$t^{-\frac{N}{2}}(1+s)^{-\frac{N-1}{2}}e^{\rho s}\int_0^{s}e^{-\frac{1}{4t}(r^2+s^2)}
 \frac{J_{\frac{N}{2}-1}(i\frac{rs}{2t})}
 {(i\frac{rs}{2t})^{\frac{N}{2}-1}}(1+r)^{\frac{N-1}{2}}e^{-\rho r}r^{N-1}dr.$$
 In view of the Poisson integral representation of Bessel
 functions, viz.
 $$J_{\ap}(t)= \Big (\frac{t}{2}\Big )^{\ap}\int_{-1}^1e^{iut}(1-u^2)^{\ap-\frac{1}{2}}du $$
 we need to estimate the integral
 $$t^{-\frac{N}{2}}(1+s)^{-\frac{N-1}{2}}e^{\rho s}\int_0^{s}e^{-\frac{1}{4t}(r^2+s^2-2rsu)}
 (1+r)^{\frac{N-1}{2}}e^{-\rho r}r^{N-1}dr.$$
 Note that $(s-r)^2 = r^2+s^2-2rs\leq r^2+s^2-2rsu $ for any $-1\leq u\leq 1$.
 Consequently, as $s\geq r$,
 $(s-r)\leq (r^2+s^2-2rsu)^{\frac{1}{2}} $ and $ (1+s) \geq (1+r),$
  the above integral is bounded by
 $$t^{-\frac{N}{2}}\int_0^{s}e^{-\frac{1}{4t}(r^2+s^2-2rsu)}
 e^{\rho(r^2+s^2-2rsu)^{\frac{1}{2}}}r^{N-1}dr$$
 $$\leq t^{-\frac{N}{2}}\int_0^{\infty}e^{-\frac{1}{4t}(r^2+s^2-2rsu)}
 e^{\rho(r^2+s^2-2rsu)^{\frac{1}{2}}}r^{N-1}dr. $$
 Thus, the required integral is bounded by
 $$t^{-\frac{N}{2}}\int_0^{\infty}\int_{-1}^1 (1-u^2)^{\frac{N-1}{2}}
 e^{-\frac{1}{4t}(r^2+s^2-2rsu)}e^{\rho(r^2+s^2-2rsu)^{\frac{1}{2}}}r^{N-1}du dr.$$
 The inner integral is the generalised Euclidean translation of
 $e^{-\frac{1}{4t}r^2}e^{\rho r}.$  As the $L^1$-norm is preserved
 by such a translation, the above is bounded by
 $$t^{-\frac{N}{2}}\int_0^{\infty}e^{-\frac{1}{4t}r^2}e^{\rho r} r^{N-1}dr $$
 which can be easily seen to be bounded by $ C  e^{t\rho^2} t^{\frac{N-1}{2}} $.\\

 We can now appeal to Stein-Weiss interpolation theorem (see in \cite{SW} )
 to prove the result for $1\leq p\leq 2$. Indeed, we have
 $$\int_{\R^n}|T_t f(x)|(w_{\rho,\K}(x))^{\pm 1}h_{\K}^2(x)dx \leq C t^{\frac{N-1}{2}}\int_{\R^n}|f(x)|(w_{\rho,\K}(x))^{\pm 1}h_{\K}^2(x)dx $$
 and also
 $$\left(\int_{\R^n}|T_t f(x)|^2h_{\K}^2(x)dx\right)^{\frac{1}{2}} \leq C e^{-t\rho^2} \left(\int_{\R^n}|T_t f(x)|^2h_{\K}^2(x)dx\right)^{\frac{1}{2}}.$$
 Interpolation of these two estimates give us
 $$\left( \int_{\R^n}|T_tf(x)|^p (w_{\rho,\K}(x))^{\pm p \gm_p }h_{\K}^2(x)dx \right)^{\frac{1}{p}}$$
 $$\leq C t^{\frac{N-1}{2}\gm_p}e^{-\frac{2\rho^2}{p'}t}
 \left(\int_{\R^n}|f(x)|^p(w_{\rho,\K}(x))^{\pm p \gm_p \rho |x|}h_{\K}^2(x)dx \right)^{\frac{1}{p}} $$
 which is the required inequality for $1\leq p\leq 2$. In order to prove
 Theorem \ref{T13} when $p>2 $ we use duality. Observe that
 $$\int_{\R^n}T_tf(x)g(x)h_{\K}^2(x)dx=\int_{\R^n}f(x)T_tg(x)h_{\K}^2(x)dx  .$$
 Writing the right hand side as
 $$\int_{\R^n}f(x)(w_{\rho,\K}(x))^{ \gm_p }T_tg(x)(w_{\rho,\K}(x))^{ -\gm_p }h_{\K}^2(x)dx $$
 and applying Holder's inequality we get
 \begin{eqnarray*}
  \left|\int_{\R^n}T_tf(x)g(x)h_{\K}^2(x)dx\right| &\leq&  C \left(\int_{\R^n}|f(x)|^p(w_{\rho,\K}(x))^{p \gm_p}h_{\K}^2(x)dx \right)^{\frac{1}{p}} \\
    &\times& \left(\int_{\R^n}|T_tg(x)|^{p'}(w_{\rho,\K}(x))^{ -p' \gm_p }h_{\K}^2(x)dx \right)^{\frac{1}{p'}}  .
 \end{eqnarray*}
 For $p>2 $, $p'<2$ and hence by what we have already proved and
 the fact that $\gm_p =\gm_{p'}$, we get
 $$|\int_{\R^n}T_tf(x)g(x)h_{\K}^2(x)dx|$$
 $$\leq C t^{\frac{N-1}{2}\gm_p}e^{-\frac{2\rho^2}{p}t}\|f\|_{L^p_{\rho,\K}(\R^n)}
 \|g\|_{(L^{p}_{\rho,\K}(\R^n))^*}.$$
 Taking supremum over all $g \in (L^{p}_{\rho,\K}(\R^n))^* $ we obtain the required
 estimate.\\

 The strong continuity of $T_t$ on $L^p_{\rho,\K}(\R^n)$ follows from the norm estimates. Indeed,
 for $0<t\leq 1$, the operators $T_t$ are uniformly bounded on
 $L^p_{\rho,\K}(\R^n)$, $1\leq p <\infty$. As $L^p(\R^n, h_{\K}^2(x)dx)$
 is dense in $L^p_{\rho,\K}(\R^n)$, the strong continuity of $T_t$
 on $L^p_{\rho,\K}(\R^n)$ follows from the same on $L^p(\R^n, h_{\K}^2(x)dx)$ in view of
 the continuous inclusion $L^p(\R^n, h_{\K}^2(x)dx)\subset L^p_{\rho,\K}(\R^n)$.
 The proof of strong continuity of $T_t$ on $L^p(\R^n,
 h_{\K}^2(x)dx)$ was given in Theorem 5.3 of \cite{TX}.
 \end{proof}

We now turn our attention to the Dunkl heat semigroup on the
weighted mixed norm space
 $L^{p,2}_{\rho, \K}(\R^n)$. The semigroup $T_t$ can be
 extended to the space $L^{p,2}_{\rho,\K}(\R^n).$
 In fact we will show below that the weighted mixed norm ($L^{p,2}_{\rho,\K}(\R^n)$-norm)
 estimate of $T_tf$ can be reduced to a  vector valued inequality
 for  a sequence of Bessel semigroups of different types.\\

 The Bessel
 semigroup $B_t^{\ap}$ of type $\ap$ is initially defined on $L^2(\R^+, r^{2\ap+1}dr)$ by
 \begin{equation}\label{D}
    B_t^{\ap}f(r)= \int_0^{\infty}f(s)b_t^{\ap}(r,s)s^{2\ap+1}ds
\end{equation}
 where the kernel $b_t^{\ap}(r,s)$ is given by
 \begin{equation}\label{E}
   b_t^{\ap}(r,s)=(2t)^{-1}e^{-\frac{1}{4t}(r^2+s^2)}
 (rs)^{-\ap}J_{\ap}(\frac{irs}{2t})
\end{equation}
  where $J_{\ap}$ is the standard Bessel function of type $\ap$ of first
  kind.\\

      We can identify $L^{p,2}_{\rho,\K}(\R^n)$
  with $L^p(\R^+, \Hc, (\tilde{K}_{n/2+\gamma}(\rho r))^
  {p \gamma_p}r^{n+2\gm-1}dr),$ the $L^p$ space of $\Hc$ valued functions
  defined on $\R^+$ taken with respect to the measure
  $(\tilde{K}_{n/2+\gamma}(\rho r))^{p \gamma_p}r^{n+2\gm-1}dr$ where
  $\Hc = L^2(\sn, h_{\K}^2(\om)d\si(\om))$. For the space
  $L^2(\sn, h^2_{\K}(\om)d\si(\om))$ there exists an orthonormal
  basis consisting of h-harmonics. These are analogues of spherical
  harmonics and defined using $\D_{\K}$ in place $ \D $.
  A homogeneous polynomial $P(x)$ is said to be a solid
  h-harmonic if $\D_{\K} P(x)=0$. Restrictions of such solid harmonics to $\sn$ are
  called spherical h-harmonics. The space $L^2(\sn, h^2_{\K}(\om)d\si(\om))$ is the orthogonal
  direct sum of the finite dimensional spaces $\Hc_{m}^h $ consisting of
  h-harmonics of degree $m $. We can choose an orthonormal basis
  $Y_{m,j}^h $, $j=1,2,\ldots ,d(m) $, $d(m)=dim (\Hc_{m}^h) $ for $\Hc_{m}^h $ so that the collection
  $\{Y_{m,j}^h: \; j=1,2,\ldots ,d(m),\; m = 0,1,2,\ldots  \} $ is an orthonormal basis
  for $L^2(\sn, h^2_{\K}(\om)d\si(\om))$.\\

  If $f \in L^{p,2}_{\rho, \K}(\R^n),$ then $f(r\; \cdot) \in L^2(\sn,
  h^2_{\K}(\om)d\si(\om))$ for almost every  $r.$ Hence we
  have the following h-harmonic expansion:  for a.e. $r>0$,
  $$f(r\om)=\sum_{m=0}^{\infty}\sum_{j=1}^{d(m)}f_{m,j}(r)Y_{m,j}^h(\om)$$
  where $f_{m,j}(r)= \int_{\sn}f(r\om)Y_{m,j}^h(\om)h_{\K}^2(\om)d\si(\om)$
  are the spherical harmonic coefficients of $ f .$  In  view of Plancherel formula, we also have the following
expression for $  \|f\|_{L^{p,2}_{\rho,\K}(\R^n)} $:
  \begin{equation}\label{F}
    \left(\int_0 ^{\infty}
  \Big ( \sum_{m=0}^{\infty}\sum_{j=1}^{d(m)}|f_{m,j}(r)|^2
 \Big )^{\frac{p}{2}}(\tilde{K}_{n/2+\gamma}(\rho r))^{p \gamma_p}r^{n+2\gm-1}dr\right)
 ^{\frac{1}{p}}.
\end{equation}
  With the above notations, the following proposition gives the relation
  between the Dunkl heat semigroup and the Bessel semigroups.\\

  \begin{prop}\label{P4}
  For $1\leq p < \infty$, let $ T_t$ be the semigroup generated by $A=\Dk+\rho^2$
   and $f \in L^{p,2}_{\rho, \K}(\R^n)$. Then we have
  $$\int_{\sn}T_t f(r\om)Y_{m,j}^h(\om) h_{\K}^2(\om)d\si(\om)= c_{n,\K}
  e^{-t\rho^2}r^m B_t^{\frac{N}{2}+m-1}(\widetilde{f}_{m,j})(r)$$
  for $m=0, 1, 2, \ldots$ and $j=1, 2, \ldots , d(m)$ where $\widetilde{f}_{m,j}(r)=r^{-m}f_{m,j}(r). $
  \end{prop}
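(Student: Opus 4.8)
The plan is to compute the $Y_{m,j}^h$-coefficient of $T_tf$ straight from R\"osler's explicit heat kernel \eqref{B} and then to recognise the remaining radial integral as a Bessel semigroup of the appropriate type. First I would write, using $T_tf(x)=e^{-t\rho^2}\int_{\R^n}\Gm_\K(t,x,y)f(y)h_\K^2(y)\,dy$ and Fubini,
$$\int_{\sn}T_tf(r\om)Y_{m,j}^h(\om)h_\K^2(\om)\,d\si(\om)=e^{-t\rho^2}\int_{\R^n}\Big(\int_{\sn}\Gm_\K(t,r\om,y)Y_{m,j}^h(\om)h_\K^2(\om)\,d\si(\om)\Big)f(y)h_\K^2(y)\,dy.$$
The interchange of the order of integration is legitimate because $|E_\K(\xi,\eta)|\le e^{|\xi||\eta|}$ forces $|\Gm_\K(t,r\om,y)|\le M_\K t^{-N/2}e^{-\frac1{4t}(r-|y|)^2}$, and this Gaussian decay, together with $f\in L^{p,2}_{\rho,\K}(\R^n)$ (so that $f(s\,\cdot)\in\Hc$ for a.e.\ $s$, with at most exponential growth of $\|f(s\,\cdot)\|_\Hc$ in the mean), makes the double integral absolutely convergent, exactly as observed after Lemma \ref{L3}. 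Now pass to polar coordinates $y=s\om'$, so that $h_\K^2(y)\,dy=s^{N-1}h_\K^2(\om')\,d\si(\om')\,ds$ with $N=n+2\gm$, and use $E_\K(\la\xi,\eta)=E_\K(\xi,\la\eta)$ twice to rewrite $E_\K\!\big(\tfrac{r\om}{\sqrt{2t}},\tfrac{s\om'}{\sqrt{2t}}\big)=E_\K\!\big(\om,\tfrac{rs}{2t}\om'\big)$; the inner integral becomes
$$M_\K t^{-N/2}e^{-\frac1{4t}(r^2+s^2)}\int_{\sn}E_\K\!\Big(\om,\tfrac{rs}{2t}\om'\Big)Y_{m,j}^h(\om)h_\K^2(\om)\,d\si(\om).$$

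The second ingredient is the $h$-harmonic analogue of \eqref{C}, i.e.\ the Funk--Hecke (Hecke--Bochner) identity for the Dunkl kernel: for $Y\in\Hc_m^h$ and $z>0$,
$$\int_{\sn}E_\K(\om,z\om')Y(\om)h_\K^2(\om)\,d\si(\om)=c_{N,m}\,\frac{J_{\frac N2+m-1}(iz)}{(iz)^{\frac N2+m-1}}\,z^{m}\,Y(\om'),$$
which is \eqref{C} when $m=0$ and which I would quote from \cite{DUX}; alternatively it is the Dunkl analogue of the classical Bochner identity for solid harmonics and follows by expanding $E_\K(\cdot,z\om')$ into $h$-harmonics. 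Inserting this with $z=rs/2t$ and carrying out the $\om'$-integral (which converts $f(s\om')Y_{m,j}^h(\om')h_\K^2(\om')$ into $f_{m,j}(s)$) leaves
$$e^{-t\rho^2}\,M_\K\,c_{N,m}\,t^{-N/2}\int_0^\infty e^{-\frac1{4t}(r^2+s^2)}\,\frac{J_{\frac N2+m-1}\!\big(i\tfrac{rs}{2t}\big)}{\big(i\tfrac{rs}{2t}\big)^{\frac N2+m-1}}\Big(\tfrac{rs}{2t}\Big)^{m}f_{m,j}(s)\,s^{N-1}\,ds.$$

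Finally I would match this with the definition \eqref{D}--\eqref{E} of the Bessel semigroup $B_t^{\frac N2+m-1}$ evaluated at $\widetilde{f}_{m,j}(s)=s^{-m}f_{m,j}(s)$: writing $B_t^{\frac N2+m-1}(\widetilde{f}_{m,j})(r)=\int_0^\infty s^{-m}f_{m,j}(s)\,b_t^{\frac N2+m-1}(r,s)\,s^{N+2m-1}\,ds$ and substituting \eqref{E}, the two integrands agree after collecting the powers of $r$, $s$, $t$, $2$ and $i$: one pulls out a factor $r^m$ and a purely numerical constant, the remaining $i$'s cancelling since $J_\nu(iz)/(iz)^\nu=I_\nu(z)/z^\nu$ is real. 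Absorbing that constant into $c_{n,\K}$ yields
$$\int_{\sn}T_tf(r\om)Y_{m,j}^h(\om)h_\K^2(\om)\,d\si(\om)=c_{n,\K}\,e^{-t\rho^2}\,r^m\,B_t^{\frac N2+m-1}(\widetilde{f}_{m,j})(r),$$
which is the assertion. I expect the only genuinely delicate point to be the precise normalisation and the exact $m$-dependence of the Funk--Hecke identity displayed above; everything else is the Fubini justification and routine bookkeeping of constants.
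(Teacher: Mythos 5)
Your proposal is correct and follows essentially the same route as the paper: express $T_tf$ through R\"osler's kernel \eqref{B}, integrate against $Y_{m,j}^h$ using the Bochner/Funk--Hecke identity for the Dunkl kernel (the paper's formula \eqref{A}, which is your displayed identity up to normalisation after scaling out $\sqrt{2t}$ by homogeneity), and match the resulting radial integral with the Bessel kernel \eqref{D}--\eqref{E} to extract $r^m B_t^{N/2+m-1}(\widetilde f_{m,j})$. Your extra justification of the Fubini step and your caveat about the precise $m$-dependence of the Funk--Hecke constant are reasonable refinements of what the paper states without comment.
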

  \begin{proof}
   To prove the proposition, we make use of the following formula
\begin{eqnarray}\label{A}
  & &\int_{\sn}E_\K(x,y)Y_{m,j}^h(x')h_{\K}^2(x')d\si(x') \\  &=&
  c_{n,\K}(|x|\:|y|)^{-(\frac{N}{2}-1)}J_{\frac{N}{2}+m-1}(i |x|\:|y|)Y_{m,j}^h(y')\notag .
\end{eqnarray}
 In view of \eqref{B} and \eqref{A}, we have
 \begin{eqnarray*}
   & &\int_{\sn}\Gm_{\K}(t,r\om,s\eta)Y_{m,j}^h(\om)h_{\K}^2(\om)d\si(\om)\\ &=&
   \frac{M_{\K}}{t^{\frac{N}{2}}}e^{-t\rho^2} e^{-\frac{1}{4t}(r^2+s^2)}
   \int_{\sn}E_{\K}\Big (\frac{r\om}{\sqrt{2t}}, \frac{s\eta}{\sqrt{2t}}\Big )
   Y_{m,j}^h(\om)h_{\K}^2(\om)d\si(\om)\\\notag
   &=& c_{n,\K}e^{-t\rho^2} (2t)^{-1}e^{-\frac{1}{4t}(r^2+s^2)}
   (rs)^{-(\frac{N}{2}-1)}J_{\frac{N}{2}+m-1}\Big (\frac{i rs}{2t}\Big )Y_{m,j}^h(\eta).\notag
 \end{eqnarray*}
 Recalling the definition of $T_tf$ and making use of the above
 formula,
 \begin{eqnarray*}
  & & \int_{\sn}T_tf(r\om)Y_{m,j}^h(\om)h_{\K}^2(\om)d\si(\om)\\
  &=&  c_{n,\K}e^{-t\rho^2} (2t)^{-1}\int_{0}^{\infty}e^{-\frac{1}{4t}(r^2+s^2)}
   (rs)^{-(\frac{N}{2}-1)}J_{\frac{N}{2}+m-1}\Big (\frac{i rs}{2t}\Big
   )f_{m,j}(s)s^{N-1}ds\\
   &=& c_{n,\K} e^{-t\rho^2}r^m
   B_t^{\frac{N}{2}+m-1}(\widetilde{f}_{m,j})(r).
 \end{eqnarray*}
This proves the proposition.
 \end{proof}
 \begin{prop}\label{P5}
  For $1\leq p < \infty$, let $ T_t$ be the semigroup generated by $A=\Dk+\rho^2$
   and $f \in L^{p,2}_{\rho, \K}(\R^n)$. Then
  $$\|T_tf\|_{L^{p,2}_{\rho, \K}(\R^n)}\leq  A(p,t)\|f\|_{L^{p,2}_{\rho, \K}(\R^n)} $$
  if and only if the vector-valued inequality holds:
  $$\left(\int_0 ^{\infty}
  \Big ( \sum_{m=0}^{\infty}\sum_{j=1}^{d(m)}|r^m B_t^{\frac{N}{2}+m-1}(\widetilde{f}_{m,j})(r)|^2
 \Big )^{\frac{p}{2}}(\tilde{K}_{n/2+\gamma}(\rho r))^
  {p \gamma_p}r^{N-1}dr)\right)^{\frac{1}{p}}$$
 $$\leq  A(p,t) \left(\int_0 ^{\infty}
  \Big ( \sum_{m=0}^{\infty}\sum_{j=1}^{d(m)}|f_{m,j}(r)|^2
 \Big )^{\frac{p}{2}}(\tilde{K}_{n/2+\gamma}(\rho r))^
  {p \gamma_p}r^{N-1}dr) \right)
 ^{\frac{1}{p}}.$$
Here $N=n+2\gm $, $f_{m,j}(s)=s^{-m}f_{m,j}(s)$ and $A(p,t)$ is a constant depending on $p$ and $t$.
  \end{prop}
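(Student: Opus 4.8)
The plan is to recognise that Proposition \ref{P5} is, at bottom, a restatement of the definition of the $L^{p,2}_{\rho,\K}$-norm via the h-harmonic expansion \eqref{F}, once the spherical h-harmonic coefficients of $T_tf$ are known. First I would record that for $f\in L^{p,2}_{\rho,\K}(\R^n)$ one has $f(r\,\cdot)\in\Hc=L^2(\sn,h_\K^2\,d\si)$ for a.e.\ $r>0$, so that the coefficients $f_{m,j}(r)=\int_{\sn}f(r\om)Y_{m,j}^h(\om)h_\K^2(\om)\,d\si(\om)$ and $\widetilde f_{m,j}(r)=r^{-m}f_{m,j}(r)$ are well defined a.e., and formula \eqref{F} applies verbatim to $f$ and (once we know it lies in the relevant class) to $T_tf$.

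Next I would invoke Proposition \ref{P4}, which gives
$$(T_tf)_{m,j}(r)=\int_{\sn}T_tf(r\om)Y_{m,j}^h(\om)h_\K^2(\om)\,d\si(\om)=c_{n,\K}\,e^{-t\rho^2}\,r^m\,B_t^{\frac N2+m-1}(\widetilde f_{m,j})(r),$$
with $N=n+2\gm$. Substituting into the expression \eqref{F} for $\norm{T_tf}_{L^{p,2}_{\rho,\K}(\R^n)}$, the scalar $|c_{n,\K}|\,e^{-t\rho^2}$ pulls out of both the $\ell^2$-sum over $(m,j)$ and the outer $L^p(r^{N-1}\,dr)$-integral, so that
$$\norm{T_tf}_{L^{p,2}_{\rho,\K}(\R^n)}=|c_{n,\K}|\,e^{-t\rho^2}\left(\int_0^\infty\Big(\sum_{m=0}^\infty\sum_{j=1}^{d(m)}\big|r^m B_t^{\frac N2+m-1}(\widetilde f_{m,j})(r)\big|^2\Big)^{\frac p2}(\tilde K_{n/2+\gm}(\rho r))^{p\gm_p}r^{N-1}\,dr\right)^{\frac1p}.$$
Since \eqref{F} for $f$ itself is exactly the quantity on the right-hand side of the asserted vector-valued inequality (without its constant), the two displayed norm identities make the equivalence immediate: $\norm{T_tf}_{L^{p,2}_{\rho,\K}}\le A(p,t)\norm{f}_{L^{p,2}_{\rho,\K}}$ for all $f$ holds precisely when the vector-valued inequality holds for all $f$, the constants differing only by the harmless factor $|c_{n,\K}|\,e^{-t\rho^2}$, which is absorbed into $A(p,t)$ since that constant is allowed to depend on $p,t$ and on the fixed data $n,\K,\rho$. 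I would also note that both sides are read as permitted to be $+\infty$, so no prior knowledge that $T_t$ maps $L^{p,2}_{\rho,\K}$ into itself is needed.

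The only points requiring care, and hence the main obstacle, are the well-definedness and Fubini justifications underlying the two displayed identities. One must check that the $\sn$-integration defining $(T_tf)_{m,j}(r)$ may be interchanged with the integration over $\R^n$ hidden in $T_tf=e^{-t\rho^2}H_tf$ — this is where the computation in the proof of Proposition \ref{P4} is used, and Fubini is licensed by the Gaussian factor $e^{-\frac1{4t}(|x|^2+|y|^2)}$ in the heat kernel \eqref{B}, which overpowers the at most exponential growth tolerated by the weight $(\tilde K_{n/2+\gm}(\rho\cdot))^{p\gm_p}$ — and that the Bessel integrals $B_t^{\frac N2+m-1}(\widetilde f_{m,j})(r)$ converge for a.e.\ $r$, which follows from the Gaussian decay of the kernel $b_t^{\ap}$ in \eqref{E} against the same sub-Gaussian growth of $\widetilde f_{m,j}$, together with the identity $\int_0^\infty|\widetilde f_{m,j}(s)|^2 s^{N+2m-1}\,ds=\int_0^\infty|f_{m,j}(s)|^2 s^{N-1}\,ds$, which shows that the reweighting by $r^{\pm m}$ introduces no singularity at the origin in the $L^2$ scale governing the sum in \eqref{F}. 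Granting these routine facts, the equivalence is an immediate consequence of the two norm identities.
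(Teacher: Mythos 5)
Your proposal is correct and follows essentially the same route as the paper: express $\|T_tf\|_{L^{p,2}_{\rho,\K}}$ via the h-harmonic expansion \eqref{F}, identify the coefficients of $T_tf$ using Proposition \ref{P4}, pull out the factor $c_{n,\K}e^{-t\rho^2}$ (absorbed into $A(p,t)$), and read off the equivalence. Your extra remarks on Fubini and the convergence of the Bessel integrals are sound additional care that the paper leaves implicit.
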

  \begin{proof}
With $F=T_tf $ we use the h-harmonic expansion to get
 $$\|T_tf\|_{L^{p,2}_{\rho, \K}(\R^n)}= \left(\int_0 ^{\infty}
  \Big ( \sum_{m=0}^{\infty}\sum_{j=1}^{d(m)}|F_{m,j}(r)|^2
 \Big )^{\frac{p}{2}}(\tilde{K}_{N/2}(\rho r))^
  {p \gamma_p}r^{N-1}dr)\right)
 ^{\frac{1}{p}}. $$
 Since $F_{m,j}(r) =
 \int_{\sn}T_tf(r\om)Y_{m,j}^h(\om)h_{\K}^2(\om)d\si(\om)$, in
 view of the previous proposition and the above, the norm $\|T_tf\|_{L^{p,2}_{\rho, \K}(\R^n)}
 $ is equal to
 \begin{eqnarray*}
 c_{n,\K} e^{-t\rho^2}\left(\int_0 ^{\infty}
  \Big ( \sum_{m=0}^{\infty}\sum_{j=1}^{d(m)}|r^m B_t^{\frac{N}{2}+m-1}(\widetilde{f}_{m,j})(r)|^2
 \Big )^{\frac{p}{2}}(\tilde{K}_{N/2}(\rho r))^
  {p \gamma_p}r^{N-1}dr\right)
 ^{\frac{1}{p}}
 \end{eqnarray*}
which proves the proposition.\\
\end{proof}
 In Theorem \ref{T13}, we have obtained a  bound for the operator norm of $T_t$
 on $L^p_{\rho,\K}(\R^n)$ which is given by $C~ t^{\frac{n+2\gm-1}{2}\gm_p}
 e^{-\frac{2\rho^2}{p'}t} $ or $~C~ t^{\frac{n+2\gm-1}{2}\gm_p}
 e^{-\frac{2\rho^2}{p}t} $ depending on whether $1\leq p \leq 2 $ or $p>2 .$
 This bound can be improved  if we consider
 the heat semigroup on $L^{p,2}_{\rho,\K}(\R^n)$ under the added assumption that $2\gm$ is an integer. \\
 \begin{thm}\label{T14}
 Let $1\leq p < \infty $ and let $2\gm $ be an integer. Then $T_t$, $t>0$
 defines a strongly continuous semigroup on $L^{p,2}_{\rho,\K}(\R^n)$.
 Moreover, for any $f \in L^{p,2}_{\rho,\K}(\R^n)$
 we have
 \begin{equation}\label{H}
   \|T_tf\|_{L^{p,2}_{\rho,\K}(\R^n)}\leq C (1+t)^{\frac{n+2\gm-1}{2}(1+\gm_p)}
 e^{-\frac{4\rho^2}{p p'}t} \|f\|_{L^{p,2}_{\rho,\K}(\R^n)} .
\end{equation}
 \end{thm}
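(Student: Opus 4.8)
The plan is to reduce the weighted mixed-norm bound to the scalar heat-semigroup estimate of Theorem~\ref{T7}, applied in the auxiliary Euclidean space $\R^N$ with $N=n+2\gm$; the hypothesis that $2\gm$, and hence $N$, is a positive integer is precisely what lets us treat $\R^N$ as a genuine Euclidean space and invoke Theorem~\ref{T7} there verbatim. First I would use Propositions~\ref{P4} and~\ref{P5} to rewrite $\|T_tf\|_{L^{p,2}_{\rho,\K}(\R^n)}$. With the h-harmonic coefficients $f_{m,j}(r)=\int_{\sn}f(r\om)Y_{m,j}^h(\om)h_{\K}^2(\om)\,d\si(\om)$ and the kernels
\[
 K_{m,t}(r,s):=r^ms^m\,b_t^{\frac{N}{2}+m-1}(r,s)=(2t)^{-1}e^{-\frac{1}{4t}(r^2+s^2)}(rs)^{-(\frac{N}{2}-1)}J_{\frac{N}{2}+m-1}\!\Big(\frac{irs}{2t}\Big),
\]
Proposition~\ref{P4} gives $r^mB_t^{\frac{N}{2}+m-1}(\widetilde{f}_{m,j})(r)=\int_0^\infty K_{m,t}(r,s)f_{m,j}(s)\,s^{N-1}\,ds$, and then Proposition~\ref{P5} shows that $\|T_tf\|_{L^{p,2}_{\rho,\K}(\R^n)}$ equals $c_{n,\K}e^{-t\rho^2}$ times the weighted $L^p(r^{N-1}dr)$-norm of the function $r\mapsto\big(\sum_{m,j}\big|\int_0^\infty K_{m,t}(r,s)f_{m,j}(s)s^{N-1}ds\big|^2\big)^{1/2}$. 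As in the proof of Theorem~\ref{T13} I would, throughout, replace the Macdonald weight $\tilde{K}_{N/2}(\rho r)$ by its asymptotic profile $w_{\rho,\K}(r):=(1+r)^{(N-1)/2}e^{-\rho r}$.

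The crux of the whole argument is the uniform domination
\[
 |K_{m,t}(r,s)|\ \le\ |K_{0,t}(r,s)|,\qquad m=0,1,2,\dots,\quad r,s,t>0 .
\]
Because $J_\nu(iz)=i^\nu I_\nu(z)$ this is nothing but $I_{\frac{N}{2}+m-1}(z)\le I_{\frac{N}{2}-1}(z)$ for $z>0$, which holds since $\nu\mapsto I_\nu(z)$ is non-increasing on $[0,\infty)$ for each fixed $z>0$ (equivalently, it follows from the positivity of the Dunkl heat kernel $\Gm_{\K}$ together with the fact that the constants span $\Hc_0^h$: the angular operator on $\Hc=L^2(\sn,h_{\K}^2\,d\si)$ that is diagonal with symbol $K_{m,t}(r,s)$ on $\Hc_m^h$ is then a nonnegative kernel operator with constant row and column $h_{\K}^2$-integrals equal to $K_{0,t}(r,s)$, so Schur's test gives the bound). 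Granting this, the triangle inequality inside each $s$-integral, the domination, and Minkowski's integral inequality for $\ell^2$-valued functions yield, with $\Phi(s):=\|f(s\,\cdot)\|_{\Hc}=\big(\sum_{m,j}|f_{m,j}(s)|^2\big)^{1/2}$,
\[
 \Big(\sum_{m,j}\Big|\int_0^\infty K_{m,t}(r,s)f_{m,j}(s)s^{N-1}ds\Big|^2\Big)^{1/2}\ \le\ \int_0^\infty|K_{0,t}(r,s)|\,\Phi(s)\,s^{N-1}\,ds .
\]

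To finish, I would observe that $|K_{0,t}(r,s)|\,s^{N-1}\,ds$ is (a fixed dimensional constant times) the polar-coordinate form of the heat kernel of $e^{-t\D}$ on $\R^N$, so the right side above equals $c\,\big(e^{-t\D}\widetilde{\Phi}\big)(x)$ whenever $|x|=r$, where $\widetilde{\Phi}(x):=\Phi(|x|)$ is the radial lift of $\Phi$ to $\R^N$. Integrating in polar coordinates over $\R^N$ then gives
\[
 \|T_tf\|_{L^{p,2}_{\rho,\K}(\R^n)}\ \le\ C\,e^{-t\rho^2}\Big(\int_{\R^N}\big|e^{-t\D}\widetilde{\Phi}(x)\big|^p\,w_{\rho,\K}(|x|)^{p\gm_p}\,dx\Big)^{1/p}\ =\ C\,\big\|e^{-t(\D+\rho^2)}\widetilde{\Phi}\big\|_{L^p_\rho(\R^N)},
\]
where $L^p_\rho(\R^N)$ denotes the space of Section~2 with $n$ replaced by $N$ (its weight being $w_{\rho,\K}(|x|)^{\gm_pp}$, since $(N-1)/2=(n+2\gm-1)/2$). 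Applying Theorem~\ref{T7} in dimension $N$ bounds the right side by $C(1+t)^{\frac{N-1}{2}(1+\gm_p)}e^{-\frac{4\rho^2}{pp'}t}\|\widetilde{\Phi}\|_{L^p_\rho(\R^N)}$, and $\|\widetilde{\Phi}\|_{L^p_\rho(\R^N)}$ is, again in polar coordinates, a constant multiple of $\big(\int_0^\infty\Phi(r)^p\,w_{\rho,\K}(r)^{p\gm_p}\,r^{N-1}\,dr\big)^{1/p}=\|f\|_{L^{p,2}_{\rho,\K}(\R^n)}$. Since $N-1=n+2\gm-1$, this is precisely~\eqref{H}. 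The strong continuity of $T_t$ on $L^{p,2}_{\rho,\K}(\R^n)$ would then follow, just as in Theorem~\ref{T13}, from the uniform boundedness of $\{T_t\}_{0<t\le1}$ furnished by~\eqref{H} together with the density of the functions having finite h-harmonic expansion and smooth, compactly supported radial parts, on which $T_tf\to f$ as $t\to0$ by the strong continuity of the individual Bessel semigroups $B_t^{\frac{N}{2}+m-1}$. The step I expect to be the main obstacle is the uniform Bessel domination $|K_{m,t}(r,s)|\le|K_{0,t}(r,s)|$: it is what collapses the vector-valued inequality for the entire family $\{B_t^{\frac{N}{2}+m-1}\}_{m\ge0}$ into a single scalar heat-kernel estimate on $\R^N$, after which Minkowski's inequality and Theorem~\ref{T7} dispatch the rest routinely; the integrality of $N$ enters only in identifying $|K_{0,t}(r,s)|\,s^{N-1}ds$ with the heat kernel of an honest Euclidean space.
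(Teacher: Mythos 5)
Your argument is correct, but at the crucial vector-valued step it takes a genuinely different route from the paper. After the reduction of Proposition \ref{P5}, the paper does not estimate the Bessel semigroups directly: it applies Proposition \ref{P5} a second time (with $\K=0$ in dimension $N=n+2\gm$) to recast the vector-valued inequality as the mixed-norm bound for the Euclidean heat semigroup on $\R^N$ (Theorem \ref{T15}), and proves that by Rubio de Francia's transference via the Herz--Rivi\`ere lemma (Lemma \ref{L4}): any rotation-commuting operator bounded on $L^p(\R^N,w(|x|)dx)$ acts on the mixed-norm space with no larger norm, so Theorem \ref{T7} in dimension $N$ finishes. You instead collapse the whole family $\{B_t^{\frac{N}{2}+m-1}\}$ onto the single scalar kernel through the domination $I_{\frac{N}{2}+m-1}(z)\le I_{\frac{N}{2}-1}(z)$ (true: classical monotonicity of $I_\nu(z)$ in $\nu$, or your Schur-test argument, which however uses positivity of $\Gm_\K$, a fact known from R\"{o}sler but not recorded in the paper), then Minkowski's inequality and Theorem \ref{T7} applied to the radial lift of $\Phi(s)=\|f(s\,\cdot)\|_{\Hc}$. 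Your route is more elementary (no transference) and incidentally avoids a point the paper glosses over, namely that the multiplicities $d(m)$ of h-harmonics on $\sn$ differ from those of spherical harmonics on $\sN$, so the vector-valued inequality obtained from the $\K=0$ case is indexed differently from the one needed (repairable by a Marcinkiewicz--Zygmund argument, but your domination sidesteps it). The paper's transference is more robust: it needs no positivity or kernel monotonicity and applies to arbitrary rotation-invariant bounded operators. Both uses of the hypothesis that $2\gm$ is an integer, and the final constants, coincide; your treatment of strong continuity is essentially the paper's.
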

 \begin{proof}
 Let $f(x)=f_0(r)Y(\om)$ where $x=r\om$, $\om \in \sn $, $r=|x| $
 and let $Y(\om)$ be  a spherical h-harmonic of degree $m$. In view of
 Proposition \ref{P4}, we have
 $$T_tf(r\om) =  c_{n,\K} e^{-t\rho^2}r^m B_t^{\frac{N}{2}+m-1}(\widetilde{f}_0)(r) Y(\om)$$
 where $\widetilde{f}_0(r) = r^{-m}f_0(r)$. Note that  $ \|T_tf -f\|_{L^p_{\rho,
 \K}(\R^n)}$ which is equal to the product of
 \begin{eqnarray*}
    & & \left(\int_0 ^{\infty}
   |c_{n,\K} e^{-t\rho^2} r^m  B_t^{\frac{N}{2}+m-1}(\widetilde{f}_0)(r)-
   f_0(r)|^p (\tilde{K}_{N/2}(\rho r))^
  {p \gamma_p}r^{N-1}dr\right)^{\frac{1}{p}}
  \end{eqnarray*}
with $   \Big ( \int_{\sn}|Y(\om)|^p h_{\K}^2(\om)d\si(\om)\Big
    )^{\frac{1}{p}},$
tends to 0 as $t\rightarrow 0$ by the strong continuity of
 $T_t$ on $L^p_{\rho, \K}(\R^n)$ for $1\leq p < \infty $.
 This implies $\|T_tf-f\|_{L^{p,2}_{\rho,\K}(\R^n)} \rightarrow
 0$ as $t\rightarrow 0$, as $Y \neq 0 $. Similarly, if $f \in L^{p,2}_{\rho,\K}(\R^n)$ is of
 the form
 \begin{equation}\label{G}
    f(r\om) = \sum_{m=0}^M\sum_{j=1}^{d(m)}f_{m,j}(r)Y_{m,j}(\om)
\end{equation}
 where $M$ is a positive integer, then it follows that $\|T_tf-f\|_{L^{p,2}_{\rho,\K}(\R^n)}
 \rightarrow 0$ as $t\rightarrow 0$. Since the space of all such functions $ f$ having  the
 form \eqref{G} is dense in $L^{p,2}_{\rho,\K}(\R^n),$ once we prove that $T_t $
are uniformly bounded on $L^{p,2}_{\rho,\K}(\R^n) $ for $0< t\leq 1$
it is immediate that $\|T_tf-f\|_{L^{p,2}_{\rho,\K}(\R^n)} \rightarrow
 0$ as $t\rightarrow 0$ for every $f \in L^{p,2}_{\rho,\K}(\R^n)$
 and hence $T_t$ is strongly continuous on
 $L^{p,2}_{\rho,\K}(\R^n)$.\\

 In view of Proposition \ref{P5}, in order to prove the weighted mixed norm estimate \eqref{H}, it is enough
 to prove the following vector-valued inequality
 $$\left(\int_0 ^{\infty}
  \Big ( \sum_{m=0}^{\infty}\sum_{j=1}^{d(m)}|r^m B_t^{\frac{N}{2}+m-1}(\widetilde{f}_{m,j})(r)|^2
 \Big )^{\frac{p}{2}}(\tilde{K}_{N/2}(\rho r))^
  {p \gamma_p}r^{N-1}dr\right)^{\frac{1}{p}}$$
 $$\leq A(p,t)
  \left(\int_0 ^{\infty}
  \Big ( \sum_{m=0}^{\infty}\sum_{j=1}^{d(m)}|f_{m,j}(r)|^2
 \Big )^{\frac{p}{2}}(\tilde{K}_{N/2}(\rho r))^
  {p \gamma_p}r^{N-1}dr \right)
 ^{\frac{1}{p}}$$
where $ A(p,t) = C
(1+t)^{\frac{n+2\gm-1}{2}(1+\gm_p)}e^{-\frac{4\rho^2}{p p'}t} .$
 In view of the same Proposition \ref{P5},  the above vector valued inequality will follow once we prove
 $$\|T_tf\|_{L^{p,2}_{\rho,0}(\R^N)}\leq C (1+t)^{\frac{n+2\gm-1}{2}(1+\gm_p)}
 e^{-\frac{4\rho^2}{p p'}t} \|f\|_{L^{p,2}_{\rho, 0}(\R^N)} $$
for the standard heat semigroup  $T_t= e^{-t(\D+\rho^2)}$ on
$\R^N, N = n+2\gamma$. This is
 the content of the next theorem.
 \end{proof}
 \begin{thm}\label{T15}
 Let $1\leq p <\infty $, $\D$ be the standard Laplacian on $\R^N$ and
 $T_t $ be the semigroup generated by $(\D+\rho^2)$.
 Then
$$\|T_tf\|_{L^{p,2}_{\rho, 0}(\R^N)}\leq C
(1+t)^{\frac{n+2\gm-1}{2}(1+\gm_p)}
 e^{-\frac{4\rho^2}{p p'}t} \|f\|_{L^{p,2}_{\rho, 0}(\R^N)}.$$
 \end{thm}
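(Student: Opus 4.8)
The plan is to deduce the mixed-norm bound on $\R^N$ from the radial case of the $L^p_\rho$-estimate of Theorem~\ref{T7}, using that $T_t=e^{-t(\D+\rho^2)}$ on $\R^N$ commutes with rotations and is convolution with the positive, radial Gaussian kernel $h_t$. The key point is that the angular part of the heat kernel acts as a contraction on $L^2(\sN)$ up to an explicit scalar factor, which lets one replace $f$ by the radial majorant $g(x)=\big(\int_{\sN}|f(|x|\,\om)|^2\,d\si(\om)\big)^{1/2}$ at the cost only of a harmless constant.

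First I would fix $r,s>0$ and consider the integral operator on $L^2(\sN)$ with kernel $(\om,\eta)\mapsto h_t(r\om-s\eta)$. Since $h_t$ is radial, $|r\om-s\eta|^2=r^2+s^2-2rs\La\om,\eta\Ra=|r\eta-s\om|^2$, so this kernel is symmetric; moreover, by the $O(N)$-invariance of $h_t$ and of the surface measure $d\si$, the quantity $M_t(r,s):=\int_{\sN}h_t(r\om-s\eta)\,d\si(\eta)$ is independent of $\om\in\sN$. Schur's test with constant weight then shows that this operator maps $L^2(\sN)$ into $L^2(\sN)$ with norm at most $M_t(r,s)$.

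Next, writing $T_tf(x)=e^{-t\rho^2}\int_{\R^N}h_t(x-y)f(y)\,dy$ in polar coordinates $y=s\eta$ and applying Minkowski's integral inequality in $s$ together with the bound above, I obtain for every $r>0$
$$\big\|T_tf(r\,\cdot)\big\|_{L^2(\sN)}\ \le\ e^{-t\rho^2}\int_0^\infty M_t(r,s)\,\big\|f(s\,\cdot)\big\|_{L^2(\sN)}\,s^{N-1}\,ds\ =\ (T_t g)(r\om_0),$$
where $g$ is the radial function on $\R^N$ with $g(x)=\|f(|x|\,\om)\|_{L^2_\om(\sN)}$, $\om_0\in\sN$ is arbitrary, and the last equality is just the polar-coordinate form of the convolution defining $T_tg$ for radial $g$ (so $T_tg$ is radial and the right side depends only on $r$). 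Since, for a radial function $h$ on $\R^N$, one has $\|h\|_{L^p_\rho(\R^N)}^p=|\sN|\int_0^\infty|h(r)|^p(\tilde{K}_{N/2}(\rho r))^{p\gm_p}r^{N-1}\,dr$, this gives
$$\|T_tf\|_{L^{p,2}_{\rho,0}(\R^N)}\ \le\ C\,\|T_tg\|_{L^p_\rho(\R^N)}\qquad\text{and}\qquad \|g\|_{L^p_\rho(\R^N)}\ =\ |\sN|^{1/p}\,\|f\|_{L^{p,2}_{\rho,0}(\R^N)}.$$

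Finally I would apply Theorem~\ref{T7} on $\R^N$ to the radial function $g$, obtaining $\|T_tg\|_{L^p_\rho(\R^N)}\le C(1+t)^{\frac{N-1}{2}(1+\gm_p)}e^{-\frac{4\rho^2}{pp'}t}\|g\|_{L^p_\rho(\R^N)}$, and combining with the two identities above; since $N-1=n+2\gm-1$ this is exactly the claimed estimate. The only step requiring genuine attention is the angular one — verifying the symmetry and rotation-invariance of the zonal kernel $h_t(r\om-s\eta)$ so that Schur's test applies with the constant $M_t(r,s)$; once that is in place, the rest is routine bookkeeping and the citation of Theorem~\ref{T7}.
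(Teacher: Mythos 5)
Your argument is correct, but it follows a genuinely different route from the paper. You exploit the positivity and radiality of the Gaussian kernel: the zonal kernel $h_t(r\om-s\eta)$ acts on $L^2(\sN)$ with norm at most its mass $M_t(r,s)$ (Schur's test), so Minkowski's inequality gives the pointwise domination $\norm{T_tf(r\,\cdot)}_{L^2(\sN)}\le T_tg(r)$ with $g$ the radial $L^2(\sN)$-majorant of $f$, and the mixed-norm bound then reduces to the radial case of Theorem \ref{T7} on $\R^N$; the bookkeeping with $\abs{\sN}$-factors and $N-1=n+2\gm-1$ is as you say. The paper instead invokes the Rubio de Francia transference principle: using the Herz--Rivi\`ere vector-valued extension lemma it embeds $L^{p,2}_{\rho,0}(\R^N)$ into $L^p(\R^N,L^2(SO(N)),w(|x|)dx)$ via $\tilde f(x,k)=f(kx)$ and shows that \emph{any} bounded operator on $L^p(\R^N,w(|x|)dx)$ commuting with rotations extends to the mixed-norm space with no increase of norm, then applies this to $T_t$ together with Theorem \ref{T7}. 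Your proof is more elementary and self-contained, but it hinges on the heat kernel being a positive radial convolution kernel, so it would not cover general rotation-invariant operators (e.g.\ the multiplier operators $\varphi(\sqrt{\D})$ or signed kernels), whereas the paper's transference argument is operator-generic and yields $\|\tilde T\|_{op}\le\|T\|_{op}$ exactly; for the specific semigroup $T_t$ both yield the stated estimate.
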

We obtain the above result as a consequence of the weighted norm
estimate proved in  Theorem \ref{T7}. To this end, we make use of
a transference result due to Rubio de Francia, see \cite{RF}.
 For given $k\in SO(N),$ the special orthogonal group, we
 define the rotation operator $\varrho(k)$  by
 $\varrho(k)f(x)=f(kx)$. For a given radial weight function $ w $ consider  weighted mixed norm space $
 L^{p,2}_{w}(\R^N)$ consisting  of all functions $f$
 for which the norms
 $$\|f\|_{L_{w}^{p,2}(\R^N)} = \left(\int_0^{\infty}
\left(\int_{\sN}|f(r\om)|^2h_{\K}^2(\om)d\si(\om) \right)
 ^{\frac{p}{2}}w(r)r^{N-1}dr \right)^{\frac{1}{p}}$$
 are finite. We claim that for any bounded linear operator $T$ acting on
$L^p(\R^N,w(|x|)dx)$ which commutes with
 rotations, i.e. $T\varrho(k)=\varrho(k)T$ for every
 $k\in SO(N)$, there exists a bounded linear operator $\tilde{T}$ on $L^{p,2}_{w}(\R^N)$ such that $\tilde{T}f=Tf $ for
 $f \in L^{p,2}_{w}(\R^N)\cap L^p(\R^N, w(|x|)dx)$
 and $\|\tilde{T}\|_{op}\leq \|T\|_{op} $.
 In order to prove this claim, we  make use of an idea due to  Rubio de Francia \cite{RF}.
 This method described briefly in \cite{RF} is based on an extension
 of a theorem of Marcinkiewicz and Zygmund as expounded in Herz
 and Riviere \cite{HR} in the form of the following
 lemma.

 \begin{lem}\label{L4}
 Let $( G, \mu )$ and $( H, \nu )$ be arbitrary measure spaces and $S:
 L^p(G)\rightarrow L^p(G)$ a bounded linear operator. Then if $p\leq q\leq 2 $
 or $p\geq q \geq 2$, there exists a bounded linear operator
 $\tilde{S}: L^p(G; L^q(H))\rightarrow L^p(G;L^q(H))
 $ with $\|  \tilde{S}  \| \leq\|  S  \| $ such that for
 $g\in L^p(G; L^q(H)) $ of the form $g(x,\xi)=f(\xi)u(x)$
 where $f\in  L^p(G)$ and $u\in L^q(H) $ we have
 $$(\tilde{S}g)(\xi, x)=(Sf)(\xi)u(x) .$$
 \end{lem}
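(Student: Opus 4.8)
The final statement is the Marcinkiewicz--Zygmund (Herz--Rivière) vector-valued extension theorem, and the plan is to prove the norm inequality $\norm{\tilde{S}}\leq\norm{S}$ at the two endpoints $q=2$ and $q=p$ and then interpolate in the inner exponent $q$. Throughout, $\tilde{S}$ is defined by applying $S$ in the $G$-variable while leaving the $H$-variable untouched, which on products $g(\xi,x)=f(\xi)u(x)$ gives exactly $(\tilde{S}g)(\xi,x)=(Sf)(\xi)u(x)$ and, by linearity, the same formula on finite sums of such products. Since the simple functions in the $H$-variable, namely $g(\xi,x)=\sum_{k=1}^{n}f_k(\xi)\mathbf{1}_{E_k}(x)$ with the $E_k\subset H$ disjoint of finite measure, are dense in $L^p(G;L^q(H))$, I would first reduce the whole problem to the norm bound for such $g$. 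For these, $\tilde{S}g(\xi,x)=\sum_k (Sf_k)(\xi)\mathbf{1}_{E_k}(x)$ and the inner $L^q(H)$ norm collapses to a weighted finite $\ell^q$ sum, so the claim becomes the finite-sequence inequality for $\ell^q$-valued functions.

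The case $q=2$ is handled by Gaussian randomization. Let $g_1,\dots,g_n$ be independent standard Gaussian random variables (complex ones in the complex-valued case) and set $\gamma_p=\mathbb{E}\abs{g_1}^p$. The key identity is $\mathbb{E}\abs{\sum_k a_k g_k}^p=\gamma_p\big(\sum_k\abs{a_k}^2\big)^{p/2}$ for scalars $a_k$. Applying this pointwise to $(Sf_k(\xi))_k$ and to $(f_k(\xi))_k$, integrating over $G$, and using Fubini together with the boundedness of $S$ applied to the random function $\sum_k g_k f_k$, I obtain
\[
\gamma_p\,\norm{\Big(\sum_k\abs{Sf_k}^2\Big)^{1/2}}_{L^p(G)}^p
=\mathbb{E}\,\norm{S\Big(\sum_k g_k f_k\Big)}_{L^p(G)}^p
\leq \norm{S}^p\,\mathbb{E}\,\norm{\sum_k g_k f_k}_{L^p(G)}^p
=\norm{S}^p\gamma_p\,\norm{\Big(\sum_k\abs{f_k}^2\Big)^{1/2}}_{L^p(G)}^p .
\]
Cancelling $\gamma_p$ gives the $\ell^2$-valued bound with constant exactly $\norm{S}$, and after the reduction above this is precisely the case $q=2$. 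The case $q=p$ is immediate: here the mixed-norm space is $L^p(G\times H)$, and since $\tilde{S}$ acts only in the $G$-variable, Fubini together with $\norm{Sg(\cdot,x)}_{L^p(G)}\leq\norm{S}\norm{g(\cdot,x)}_{L^p(G)}$ integrated over $x\in H$ yields $\norm{\tilde{S}}\leq\norm{S}$.

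Finally I would interpolate in the inner exponent. The operator $\tilde{S}$ is given by one and the same formula on the whole scale $L^p(G;L^q(H))$, so it is a single densely-defined operator consistent across the interpolation couple. Using complex interpolation of the vector-valued Lebesgue spaces, $[L^p(G;L^{q_0}(H)),\,L^p(G;L^{q_1}(H))]_\theta=L^p(G;L^{q}(H))$ with $1/q=(1-\theta)/q_0+\theta/q_1$, and taking $\set{q_0,q_1}=\set{p,2}$, the two endpoint bounds give $\norm{\tilde{S}}_{L^p(G;L^q(H))}\leq\norm{S}^{1-\theta}\norm{S}^{\theta}=\norm{S}$ for every $q$ lying between $p$ and $2$, which is exactly the range $p\leq q\leq2$ or $2\leq q\leq p$ asserted in the lemma.

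The step I expect to be the main obstacle is the interpolation: one must justify that the complex interpolation space of the two vector-valued $L^p$ spaces is again the vector-valued $L^p$ space with the interpolated inner exponent, and that $\tilde{S}$ genuinely interpolates as a single operator on the couple. The reduction to simple functions and the Gaussian computation are routine once set up, but over \emph{arbitrary} measure spaces $(G,\mu)$ and $(H,\nu)$ some care is needed to ensure that all the manipulations---Fubini, the interchange of $S$ with the finite Gaussian sum, and the density of simple functions---are legitimate.
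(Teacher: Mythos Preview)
The paper does not actually prove this lemma: it is quoted from Herz and Rivi\`ere \cite{HR} (via the discussion in Rubio de Francia \cite{RF}) and used as a black box in the transference argument for Theorem~\ref{T15}. So there is no ``paper's own proof'' to compare against.

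Your argument is the standard one and is correct. The endpoint $q=2$ via Gaussian averaging and the identity $\mathbb{E}\bigl|\sum_k a_k g_k\bigr|^p=\gamma_p\bigl(\sum_k|a_k|^2\bigr)^{p/2}$ is exactly the Marcinkiewicz--Zygmund inequality with constant $1$; the endpoint $q=p$ is trivial by Fubini; and complex interpolation with fixed outer exponent gives $[L^p(G;L^{q_0}(H)),L^p(G;L^{q_1}(H))]_\theta=L^p(G;L^q(H))$ since $L^p(G;\cdot)$ is an exact interpolation functor and $[L^{q_0}(H),L^{q_1}(H)]_\theta=L^q(H)$. Your caveats about arbitrary (possibly non-$\sigma$-finite) measure spaces are well placed: density of simple functions in $L^p(G;L^q(H))$ and the identification $L^p(G;L^p(H))=L^p(G\times H)$ require a little care in that generality, but these are routine and do not affect the argument. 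One cosmetic point: in the statement and in your formula the roles of the variables are $\xi\in G$ and $x\in H$, so ``applying $S$ in the $G$-variable'' means acting on $f(\xi)$, which is what you wrote; just be consistent with that throughout.
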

 The idea of Rubio de Francia is as follows. Since $
 T: L^p(\R^N, w(|x|)dx) \rightarrow L^p(\R^N, w(|x|)dx) $ is a bounded
 linear operator, by the lemma of Herz and Riviere, there exists a bounded linear operator $
 \tilde{T} $ on $L^p(\R^N, \mathcal{H}, w(|x|)dx),$ the space of all $\mathcal{H}$
 valued functions $F$ on $\R^N$ for which
 $$\int_{\R^N} \Big ( \int _K |F(x)(k)|^2 dk\Big )^{\frac{p}{2}}w(|x|)dx$$
 are finite. Here
 $ \mathcal{H} $ is the Hilbert space $ L^2(K)$, $K=SO(N)$ and $dk$ is the Haar measure on $K$.
 Moreover, the operator $\tilde{T}$ satisfies $(\tilde{T}\tilde{f})(x,k) =
 Tg(x)h(k)$ if $\tilde{f}(x,k)=g(x)h(k) $, $x \in \R^N $, $k\in SO(N) $.
 Given a function $f \in L^p(\R^N, w(|x|)dx) $ consider $\tilde{f}(x, k) = \varrho(k)f(x)=f(kx)
 $. Then $$\int _{\R^N}\left(\int _K | \tilde{f}(x,k)|^2 dk \right)^{\frac{p}{2}}w(|x|)dx $$
 can be calculated as follows.
 If $x=r\om $ , $\om \in \sN $, $\tilde{f}(x,k)=f(rk\om) $ and
 hence
 \begin{eqnarray}
  \int _K |\tilde{f}(x,k)|^2dk =
  \int_{K_{\om}}\Big (\int_{K/K_{\om}}|f(r k \om )|^2 d \mu \Big )d\nu
 \end{eqnarray}
 where $K_{\om}=\{ k\in K: k \om = \om \} $ is the isotropy
 subgroup of $ K $, $ d\nu $ is the Haar
 measure on $K_{\om} $ and $d\mu $ is the $K_{\om} $ invariant
 measure on $K/K_{\om} $ which
 can be identified with $\sN $. Hence
 \begin{eqnarray*}
   \int _K |\tilde{f}(x,k)|^2 dk = c_N \int _{\sN} |f(r\om)|^2d\sigma
   (\om).
 \end{eqnarray*}
 Therefore,
 \begin{eqnarray*}
  & &\int_{\R^N} \Big ( \int _K |\tilde{f}(x,k)|^2 dk\Big
  )^{\frac{p}{2}}w(|x|)dx\\
  & = &  c'_N \int _0 ^{\infty} \Big ( \int _{\sn} |f(r\om)|^2d\sigma
   (\om)\Big )^{\frac{p}{2}}w(r)r^{N-1}dr.
 \end{eqnarray*}
 and hence $L^{p,2}_{w}(\R^N)$ can be considered as a subspace of $L^p(\R^N, \mathcal{H}, w(|x|)dx)$
  with the identification $f\mapsto \tilde{f} $ and it is  invariant under the operator $\tilde{T}$.
 Since $ T $ commutes with rotations, i.e. $T\varrho (k)= \varrho (k)T $ we see that
 $$\tilde{T} \tilde{f}(x,k) = T(\varrho (k)f)(x) = \varrho (k) (Tf)(x) = (Tf)(k x).$$
 The boundedness of $ \tilde{T} $ on $ L^p(\R^N, \mathcal{H}, w(|x|)dx) $ gives
 \begin{eqnarray*}
 % \nonumber to remove numbering (before each equation)
  & & \int_{\R^N} \Big ( \int _K | T f(k x)|^2 dk \Big )^{\frac{p}{2}}w(|x|)dx
  \leq  C \int_{\R^N} \Big ( \int _K | f(k
 x)|^2 dk \Big )^{\frac{p}{2}}w(|x|)dx
 \end{eqnarray*}
 which translates into the boundedness of the restriction of  $\tilde{T}$
 to the weighted mixed norm space $L^{p,2}_{w}(\R^N)$. This
 proves our claim.\\

%----------------------------------------------------------------------------
\subsection{The point spectrum of $\Delta_\K $ on $ L^p_{\rho,\K}(\R^n)$ }
 In this subsection we precisely determine the point spectrum of
 the Dunkl-Laplacian on the weighted spaces $ L^p_{\rho,\K}(\R^n).$
 In the unweighted case, the spectrum of $ \Delta_\K $ turns out to
 be the half line $ [0,~\infty) $ for all $ p.$ This follows from a
 multiplier theorem for the Dunkl transform proved in \cite{DW}. On
 the other hand we do not have a multiplier theorem on the weighted
 spaces $ L^p_{\rho,\K}(\R^n)$ for $ p \neq 2.$ However, it is not
 difficult to determine the  point-spectrum $
 \sigma_{pt}(\Delta_\K+\rho^2) $ on these spaces.

 \begin{thm}\label{T16} For any $ 1 \leq p < \infty $ we have
 $ \sigma_{pt}(\Delta_\K +\rho^2)  = \mathfrak{P}^0_p,$ the
 interior of $\mathfrak{P}_p.$
 \end{thm}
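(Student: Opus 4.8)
The plan is to show both inclusions $\sigma_{pt}(\Delta_\K+\rho^2)\supseteq \mathfrak{P}^0_p$ and $\sigma_{pt}(\Delta_\K+\rho^2)\subseteq \mathfrak{P}^0_p$. For the first inclusion I would exhibit explicit eigenfunctions. Recall that the Dunkl-translated spherical functions
$$\tau_x\varphi_{\la,\K}(y)=\int_{\sn}E_\K(i(\text{translate of }x\text{ by }y),\la\om)h_\K^2(\om)\,d\si(\om)$$
are eigenfunctions of $\Delta_\K$ with eigenvalue $\la^2$, and, by the estimate $|E_\K(x,\xi)|\le e^{|x||\xi|}$ together with the asymptotics of $J_{\frac N2+\gm-1}$ already recorded in the excerpt, one checks $\varphi_{\la,\K}\in L^p_{\rho,\K}(\R^n)$ (hence, using boundedness of Dunkl translation on the relevant weighted spaces or a direct growth estimate, $\tau_x\varphi_{\la,\K}\in L^p_{\rho,\K}(\R^n)$) precisely when $|\Im(\la)|<\gm_p\rho$ and $p\ne 2$. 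Thus every point $\la^2+\rho^2$ with $\la$ in the interior of $\Lambda_p$, i.e. every point of $\mathfrak{P}^0_p$, is an eigenvalue; this gives $\supseteq$. (For $\la$ on the boundary $|\Im(\la)|=\gm_p\rho$ the candidate eigenfunctions just fail to lie in the space, which is why one only gets the open set.)

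For the reverse inclusion, suppose $\mu=\la^2+\rho^2$ is an eigenvalue with eigenfunction $0\ne u\in L^p_{\rho,\K}(\R^n)$, so $(\Delta_\K+\rho^2)u=\mu u$, equivalently $\Delta_\K u=\la^2 u$. First, $\mu\notin$ the resolvent set forces, via the norm estimate of Theorem \ref{T13}: $\|T_t\|\le Ct^{\frac{N-1}{2}\gm_p}e^{-2\rho^2 t/p'}$ (resp. $e^{-2\rho^2 t/p}$ for $p>2$), that $T_tu=e^{-t\mu}u$ cannot grow, which bounds $\Re(\mu)$ from below and pins $\mu$ into the closed strip; more precisely, if $\Re(\mu)\le$ the spectral bound then $e^{-t\Re\mu}\|u\|\le \|T_tu\|\le \|T_t\|\,\|u\|$ is consistent only for $\mu\in\mathfrak{P}_p$. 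This already gives $\sigma_{pt}\subseteq\mathfrak{P}_p$. To exclude the boundary, one argues that an $L^p_{\rho,\K}$ eigenfunction with $|\Im(\la)|=\gm_p\rho$ would, after expanding $u$ in $h$-harmonics $u(r\om)=\sum_{m,j}u_{m,j}(r)Y^h_{m,j}(\om)$, force each radial component $u_{m,j}$ to solve the Bessel-type ODE $(\Delta_\K u=\la^2u$ restricted to the $m$-th harmonic); its solutions are combinations of $r^{-m}\tilde I_{\frac N2+m-1}(\la r)$-type and singular-at-$0$ functions, and regularity at the origin (needed for $u\in L^p_{\rho,\K}$, a space of genuine functions on $\R^n$) selects exactly the $\varphi_{\la,\K}$-type solution, whose growth $e^{|\Im\la| r}r^{-(N-1)/2}$ at infinity lies in $L^p_{\rho,\K}$ only in the strict inequality case. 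Hence no boundary eigenvalues occur, giving $\subseteq\mathfrak{P}^0_p$.

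The main obstacle is the boundary exclusion and, relatedly, verifying that $\tau_x\varphi_{\la,\K}\in L^p_{\rho,\K}(\R^n)$ without clean mapping properties of Dunkl translation on weighted spaces (the authors themselves flag in Remark \ref{R1} and after Theorem \ref{T7} that these are not available in general). I expect one must avoid $\tau_x$ altogether for the $\supseteq$ direction and instead use the $h$-harmonic/Bessel-semigroup machinery of Proposition \ref{P4}: the functions $r^m\tilde I_{\frac N2+m-1}(\la r)Y^h_{m,j}(\om)$ are honest eigenfunctions of $\Delta_\K$, their $L^{p}_{\rho,\K}$-membership reduces to a one-dimensional weighted integral $\int_0^\infty |\tilde I_{\frac N2+m-1}(\la r)|^p r^{mp}(\tilde K_{N/2+\gm}(\rho r))^{p\gm_p} r^{N-1}dr<\infty$, and the asymptotics $\tilde I_\nu(z)\tilde K_{\nu+1}(z)\in[C_1,C_2]$ from \eqref{J} show this holds iff $|\Im\la|<\gm_p\rho$. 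For the $\subseteq$ direction the same ODE analysis, combined with the growth/decay dichotomy of the two independent Bessel solutions and the requirement of local integrability of $u$ near $0$ against $h_\K^2$, closes the argument. A secondary technical point is handling $p=1$ and $1<p<2$ versus $p>2$ uniformly, but since $\gm_p=\gm_{p'}$ and the weights match under the duality identification already used repeatedly in Section 3, this is routine.
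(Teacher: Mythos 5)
Your final plan is essentially the paper's own proof: expand a putative eigenfunction in $h$-harmonics, note that each radial coefficient solves the Bessel equation of order $m+N/2-1$, discard the second solution $Y_{m+N/2-1}$ by local integrability near the origin, and conclude from the weighted integrability of $J_{m+N/2-1}(\la r)/(\la r)^{N/2-1}$ against $(\tilde{K}_{N/2+\gm}(\rho r))^{p\gm_p}r^{N-1}dr$ that $|\Im(\la)|<\gm_p\rho$; the inclusion $\supseteq$ is exactly the already-recorded membership $\varphi_{\la,\K}\in L^p_{\rho,\K}(\R^n)$ for $|\Im(\la)|<\gm_p\rho$, so no Dunkl translations are needed, just as you suspected. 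Two corrections to your intermediate discussion: the ODE analysis is not merely a boundary-exclusion device but is the entire proof of $\sigma_{pt}(\Delta_\K+\rho^2)\subseteq\mathfrak{P}^0_p$ (as your last paragraph in effect concedes), and the claim that the norm bound of Theorem \ref{T13} ``already gives $\sigma_{pt}\subseteq\mathfrak{P}_p$'' is false --- an estimate of the form $\|T_t\|\leq Ct^{a}e^{-bt}$ applied to $T_tu=e^{-t\mu}u$ only forces $\Re(\mu)\geq b$, i.e.\ a half-plane, which strictly contains the parabolic region $\mathfrak{P}_p$, so that step should simply be deleted rather than relied upon. Finally, the regular solutions are the $J$-Bessel functions $r^{m}J_{N/2+m-1}(\la r)/(\la r)^{N/2+m-1}$ in the paper's normalization $\Delta_\K\varphi_{\la,\K}=\la^2\varphi_{\la,\K}$, not $\tilde{I}_{N/2+m-1}(\la r)$; this is only a relabeling of $\la$, but worth fixing for consistency with the growth estimates you invoke.
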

 \begin{proof}
 It is enough to prove that if $f$ is an eigenfunction of  $\Dk$ in $
 L^p_{\rho,\K}(\R^n)$, $1\leq p< \infty $ with eigenvalue $\la^2$, then  $ |\Im(\la)|<
 \gm_p\rho$. If $ f $ is such an eigenfunction, then
 $$
 \int_{\sn}\Dk f(r\om)Y_{m,j}^h(\om)h_{\K}^2(\om)d\si(\om)=
  \la^2\int_{\sn} f(r\om)Y_{m,j}^h(\om)h_{\K}^2(\om)d\si(\om) $$
  for $m=0,1,2,\cdots $, $j=1,2,\cdots , d(m)$. Here  $Y_{m,j}^h$ is a spherical h-harmonic of degree $m$
  taken from the orthonormal basis $\{Y_{m,j}^h: \; j=1,2,\ldots ,d(m),\; m = 0,1,2,\ldots  \} $
  for $L^2(\sn, h^2_{\K}(\om)d\si(\om))$.  The Dunkl-Laplacian has the explicit form (see page.159 in \cite{DX})
  $$\Dk=-\Big (\frac{d^2}{dr^2}+\frac{N-1}{r}\frac{d}{dr}+\frac{1}{r^2}\D_{h,0}\Big )$$
  where $\D_{h,0}$ is the spherical part of $\Dk $ and $N=n+2\gm.$ In view of this, we have
  \begin{eqnarray*}
    \left(\frac{d^2}{dr^2}+\frac{N-1}{r}\frac{d}{dr} +\lambda^2 \right)f_{m,j}(r) =  -\frac{1}{r^2}
  \int_{\sn}\D_{h,0} f(r\om)Y_{m,j}^h(\om)h_{\K}^2(\om)d\si(\om)
     \end{eqnarray*}
  where
  $$ f_{m,j}(r) = \int_{\sn} f(r\om)Y_{m,j}^h(\om)h_{\K}^2(\om)d\si(\om) $$
  are the spherical harmonic coefficients of $ f.$\\

  Making use of the facts that $\D_{h,0} $ is selfadjoint on $L^2(\sn, h^2_{\K}(\om)d\si(\om))$
  and $Y_{m,j}^h$ are
  eigenfunctions of $\D_{h,0} $ with eigenvalues $-m(m+N-2)$, we see that
   the functions $f_{m,j}$ satisfy the differential
  equation
  $$\left(r^2\frac{d^2}{dr^2}+(N-1)r\frac{d}{dr}+\la^2r^2-m(m+N-2)
  \right)f_{m,j}(r)=0 $$
  for $r>0$.  If we define  $g_{m,j}(r)=r^{N/2-1}f_{m,j}(r)$, then these functions
  satisfy the Bessel differential equation of type
  $(m+\frac{N}{2}-1)$, i.e.
  $$\left(r^2\frac{d^2}{dr^2}+r\frac{d}{dr}+\la^2r^2-(m+\frac{N}{2}-1)^2
  \right)g_{m,j}(r)=0 $$
  for $ r>0 $. We know that the linearly independent solutions of Bessel differential
  equation of type $\nu $ are given by the Bessel functions of first kind $J_\nu(\la r) $ and second kind
  $Y_\nu(\la r),$ see e.g. \cite{NU}, page 219. Therefore,
  $$g_{m,j}(r)=C_1(\lambda) J_{m+N/2-1}(\la r)+C_2(\lambda)Y_{m+N/2-1}(\la r).$$
  In the above  $C_2(\lambda) $ has to be $ 0 $  since $g_{m,j}(r) $ and $J_{m+N/2-1}(\la r) $ are locally integrable
  functions whereas  $Y_{m+N/2-1}(\la r) $ is not  locally integrable near the origin.
  This follows from the fact that $$Y_{m+N/2-1}(x)\approx - \frac{2^{m+N/2-1}\Gamma(m+N/2-1)}{\pi x^{m+N/2-1}}~$$
  for $~ x\rightarrow 0 $. Thus $g_{m,j}(r)=r^{N/2-1}f_{m,j}(r)=C_1(\lambda)J_{m+N/2-1}(\la r)$
  and hence
  $$f_{m,j}(r)= C_1 \frac{J_{m+N/2-1}(\la r)}{(\lambda r)^{N/2-1}} $$ for $r>0
  $. Since $f \in L^p_{\rho, \K}(\R^n)$, it can be easily seen
  that
  $$\int_0^{\infty}|f_{m,j}(r)|^p(K_{n/2+\gm}(\rho r))^{p\gm_p} r^{N-1}dr<\infty $$
  which implies that
  $$\int_0^{\infty}\left| \frac{J_{m+N/2-1}(\la r)}{(\lambda r)^{N/2-1}}\right|^p(1+ r)^{\frac{(n+2\gm-1)}{2}p \gm_p}
     e^{-\rho p\gm_p r} r^{N-1}dr<\infty. $$
  This is possible only if $|\Im(\la)|< \gm_p \rho $ which proves
  Theorem \ref{T16}.
\end{proof}

%----------------------------------------------------------------------------

\subsection{The chaotic behavior of the Dunkl heat semigroup:}
 In this subsection we prove the remaining main theorems, namely
 Theorem \ref{T5} and Theorem \ref{T6} regarding the chaotic
 behavior of the semigroup $T_t^c = e^{ct}e^{-t(\Dk+\rho^2)}$,
 $c \in \R$ on the spaces $L_{\rho,\K}^p(\R^n)$ and
 $L^{p,2}_{\rho, \K}(\R_n)$. For the space $L^{p,2}_{\rho,
 \K}(\R_n)$, we assume that $2\gm$ is an integer to give a
 complete picture of the chaotic behavior of the semigroup
 $T_t^c = e^{ct}e^{-t(\Dk+\rho^2)}$, $c \in \R$ on that space. We
 prove these results by imitating the proofs presented in the Subsection 2.3 where we have
 discussed the chaotic behavior of heat semigroup in the Euclidean set up.
 So we give a very sketchy outline of  these proofs. We use all the notations
 introduced in the Subsection 2.3 with some appropriate changes required for the Dunkl set up.
 For example, we have to replace  the Euclidean translations of the spherical
 functions $\tau_x \varphi_{\la}$ by the Dunkl translations of the
 Bessel functions $\tau_x \varphi_{\la, \K}$ which are defined
  by  $$\tau_x\varphi_{\la,\K}(y)= \int_{\sn} E_\K(ix,\la\om)E_{\K}(iy,\la\om)h_\K^2(\om)d\si(\om)$$
  where $\varphi_{\la, \K}$ are Dunkl spherical functions defined
  in  Subsection 3.2. All other notations $\Lambda_p, ~\om_c, ~c_p, ~A_j's,
  ~\mathcal{A}_j's$, $\B_0, ~\B_{\infty}$ and  $\B_{Per}$  are the same
  as in Subsection 2.3. Note that $\mathcal{A}_j$'s are defined
  by using Dunkl translations of Dunkl spherical functions
  $\tau_x \varphi_{\la, \K}$ in place of Euclidean translations
  of spherical functions $\tau_x \varphi_{\la}$ and
  $\B_0, \B_{\infty}$ and  $\B_{Per}$ are defined for the space
  $\B=L^p_{\rho, \K}(\R^n)$ and the semigroup $T_t^c= e^{ct}e^{-t(\Dk+\rho^2-c)}$.
  Also note that  $\tau_x\varphi_{\la,\K} \in L^p_{\rho, \K}(\R^n)$
  whenever $\la \in \Lambda_p^0$ and $p\neq 2$. This follows from the estimate
  $|E_{\K}(ix, \la \om)|\leq C e^{|\Im(\la)|\:|x|}$. See
  \cite{DJ}, where estimates of partial derivatives of $E_{\K}(x,z)$, $x \in \R^n,\; z \in \C^n$
  are given. It is then clear that $\mathcal{A}_j\subset L^p_{\rho, \K}(\R^n)$
  and also for $f \in \mathcal{A}_j$, $T_tf = e^{-t\om_c(\la)}f $ for each $1\leq j \leq 3 $.\\

  With these notations, we have the following proposition which is the analogue of Proposition \ref{P1}.

 \begin{prop}\label{P6} For each $1\leq j \leq 3 $,  $\mathcal{A}_j$ is dense in $L^p_{\rho,\K}(\R^n)$,
 $1 \leq  p<\infty,~~ p\neq 2 $ and $span(\mathcal{A}_1)\subset
 \B_0$, $span(\mathcal{A}_2)\subset \B_\infty$ and
 $span(\mathcal{A}_3)\subset \B_{Per}$ provided $c>c_p$.
\end{prop}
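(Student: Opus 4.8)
The plan is to mirror the proof of Proposition \ref{P1} step by step, replacing Euclidean translations of spherical functions by Dunkl translations of Dunkl spherical functions, and the Fourier inversion/convolution machinery by its Dunkl analogue. First I would record the two structural facts already assembled in the excerpt: that $\tau_x\varphi_{\la,\K}$ is an eigenfunction of $\Dk+\rho^2$ with eigenvalue $\la^2+\rho^2$ (hence $T_t^c(\tau_x\varphi_{\la,\K}) = e^{-t\om_c(\la)}\tau_x\varphi_{\la,\K}$), and that $\tau_x\varphi_{\la,\K}\in L^p_{\rho,\K}(\R^n)$ for $\la\in\Lambda_p^0$, $p\neq 2$, via $|E_\K(ix,\la\om)|\le Ce^{|\Im(\la)|\,|x|}$. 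With these in hand, the inclusions $\operatorname{span}(\mathcal{A}_1)\subset\B_0$, $\operatorname{span}(\mathcal{A}_2)\subset\B_\infty$, $\operatorname{span}(\mathcal{A}_3)\subset\B_{Per}$ go through verbatim: for $f\in\operatorname{span}(\mathcal{A}_1)$, $\Re(\om_c(\la))>0$ forces $T_t^cf\to 0$; for $f=\sum a_k\tau_{x_k}\varphi_{\la_k,\K}\in\operatorname{span}(\mathcal{A}_2)$ the preimages $f_t=\sum a_k e^{t\om_c(\la_k)}\tau_{x_k}\varphi_{\la_k,\K}$ satisfy $T_t^cf_t=f$ with $\|f_t\|\to 0$ since $\Re(\om_c(\la_k))<0$; and the periodicity inclusion follows since $A_3$ is the set where $\om_c(\la)\in i\mathbb{Q}$, exactly as in \cite{PS}. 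The nonemptiness and infinitude of $A_3$ (and nonemptiness of $A_1,A_2$) for $c>c_p$ are quoted from Lemma 4.1 of \cite{PS}.

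The substantive point is the density of each $\mathcal{A}_j$ in $L^p_{\rho,\K}(\R^n)$, $1\le p<\infty$, $p\neq 2$. For $1<p<\infty$ I would use the duality identification of $(L^p_{\rho,\K}(\R^n))^*$ with $L^{p'}(\R^n,(\varphi_{i\rho,\K}(x))^{p'\gm_{p'}}h_\K^2(x)\,dx)$ together with the embedding of this dual space into $L^1(\R^n, h_\K^2(x)\,dx)$ (which holds since $(\varphi_{i\rho,\K})^{p'\gm_{p'}}$ decays exponentially like $e^{-\gm_{p'}\rho p'|x|}$ while $|\varphi_{\la,\K}|$ grows at most like $e^{|\Im\la||x|}$, $|\Im\la|<\gm_p\rho=\gm_{p'}\rho$). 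Suppose $g$ in this dual space annihilates all $\tau_x\varphi_{\la,\K}$ for $\la\in A_j$, $x\in\R^n$. The map $\la\mapsto\int_{\R^n}g(y)\tau_x\varphi_{\la,\K}(y)h_\K^2(y)\,dy$ is continuous on $\Lambda_p^0$ (dominated convergence, using the uniform-in-$\la$ domination above) and holomorphic (Morera plus Fubini, using evenness of $\varphi_{\la,\K}$ in $\la$); since $A_j$ is a nonempty open subset it vanishes identically on $\Lambda_p^0$, in particular for all real $\la$. Then, using the semigroup/convolution structure $\tau_{-x}\varphi_{-\la,\K}$ and the Dunkl convolution $g\ast_\K\varphi_{\la,\K}(x)=\int g(y)\tau_{-x}\varphi_{-\la,\K}(y)h_\K^2(y)\,dy$, I would invoke the Dunkl transform inversion formula (stated in the excerpt) in its radial/bandlimited form $g(x)=c\int_0^\infty g\ast_\K\varphi_{\la,\K}(x)\,\la^{N-1}\,d\la=0$, so $g=0$. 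For $p=1$ one argues with a bounded $g_1$ and passes to $g(y)=g_1(y)\tilde K_{N/2+\gm}(\rho|y|)\in L^1(\R^n,h_\K^2dx)$ as in the Euclidean proof. The density of $\mathcal{A}_2$ and $\mathcal{A}_3$ follows identically since $A_2$ is open and $A_3$ has a limit point in $\Lambda_p^0$, so the same holomorphy argument applies.

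The main obstacle I anticipate is justifying the Dunkl analogue of the spherical Fourier inversion formula $g(x)=c\int_0^\infty (g\ast_\K\varphi_{\la,\K})(x)\,\la^{N-1}\,d\la$ and the identity relating $\tau_x\varphi_{\la,\K}$ to Dunkl convolution — these require knowing that Dunkl translation acts reasonably on the relevant function classes and that $\varphi_{\la,\K}$ is (up to normalization) the Dunkl transform of normalized surface measure on the sphere scaled by $\la$. In the unweighted radial setting this is standard (it reduces to the Hankel transform of type $N/2-1$), but one must check $g\ast_\K\varphi_{\la,\K}$ is well defined for $g\in L^1(\R^n,h_\K^2dx)$ and that Fubini applies; I would handle this by reducing to the radial part via the h-harmonic expansion and Proposition \ref{P4}-type identities, or simply cite the Dunkl transform theory in \cite{DUX}. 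A secondary technical point is the uniform domination needed for continuity and holomorphy of $\la\mapsto\int g\,\tau_x\varphi_{\la,\K}$, which follows from the explicit bound $|\tau_x\varphi_{\la,\K}(y)|\le Ce^{|\Im\la|(|x|+|y|)}$ combined with the exponential decay of the dual weight — straightforward but worth stating. Once density and the three inclusions are in place, Corollary \ref{C1} finishes the sufficiency direction of Theorem \ref{T5}(1), just as Proposition \ref{P1} did for Theorem \ref{T4}.
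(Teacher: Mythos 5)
Your overall architecture matches the paper's: the three inclusions are handled exactly as in Proposition \ref{P1}, and density is attacked by duality, embedding the dual space into $L^1(\R^n,h_\K^2(x)dx)$, and extending the vanishing of the pairing from the open (or accumulating) set $A_j$ to all of $\Lambda_p$ via the Morera--Fubini holomorphy argument. The divergence --- and the gap --- is in the final injectivity step. You conclude $g=0$ from a pointwise Dunkl inversion formula $g(x)=c\int_0^\infty g\ast_\K\varphi_{\la,\K}(x)\,\la^{N-1}d\la$, but for $g$ merely in $L^1(\R^n,h_\K^2(x)dx)$ this identity is not available as stated: the $\la$-integral need not converge absolutely, the inversion theorem recalled in the paper requires $\mathcal{F}_\K g$ to be integrable, and the auxiliary claim that Dunkl translation/convolution ``acts reasonably'' on the relevant classes is exactly what the paper says is not known (see Remark \ref{R1} and the discussion preceding Theorem \ref{T6}). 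You flag this as the main obstacle, but the proposed workarounds (h-harmonic reduction, citing \cite{DUX}) are not carried out, so the last step remains unjustified.

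The paper resolves precisely this obstacle by a Gaussian regularization instead of inversion: once $\int_{\R^n}g(y)\tau_x\varphi_{\la,\K}(y)h_\K^2(y)dy=0$ for all real $\la$ and all $x$, R\"osler's formula for the heat kernel (Lemma \ref{L3}(1)), written in polar coordinates, gives for every $t>0$
$$ e^{-t\Dk}g(x)=c_N\int_0^\infty e^{-t\la^2}\Big(\int_{\R^n}\tau_{-x}\varphi_{\la,\K}(y)\,g(y)\,h_\K^2(y)\,dy\Big)\la^{n+2\gm-1}\,d\la=0, $$
where the factor $e^{-t\la^2}$ makes the convergence and the use of Fubini immediate; then the strong continuity of $e^{-t\Dk}$ on $L^1(\R^n,h_\K^2(x)dx)$ forces $g=0$ as $t\to 0$ (and the $p=1$ case is reduced to this, as you indicate, by multiplying the bounded annihilator by the weight). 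If you replace your inversion step by this heat-semigroup argument, the remainder of your proof coincides with the paper's.
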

\begin{proof}
The set inclusions $span(\mathcal{A}_1)\subset \B_0$,
$span(\mathcal{A}_2)\subset \B_\infty$ and
$span(\mathcal{A}_3)\subset \B_{Per}$ can be proved by the same
arguments given in the proof of Proposition \ref{P1}. Now we need
to prove the densities of $\B_0,\; \B_\infty $ and $\B_{Per}$
which will follow once we prove that of $\mathcal{A}_j$'s are
dense in $L^p_{\rho, \K}(\R^n)$. Suppose the span of
$\mathcal{A}_1$ is not dense in $L^p_{\rho, \K}(\R^n)$, $1<
p<\infty $. As the dual of $L^p_{\rho, \K}(\R^n) $ can be
identified with $L^{p'}(\R^n, (\varphi_{i\rho, \K}(x))^{p'\gm{p'}}
h_\K(x)^2 dx)$ (where $1/p+1/p'=1$ and $1<p'<\infty$), there
exists $g\in L^{p'}(\R^n, (\varphi_{i\rho, \K}(x))^{p'\gm{p'}}
h_\K(x)^2dx) $ such that
$$\int_{\R^n}g(y)\tau_x\varphi_{\la}(y)h_\K^2(y)dy =0$$
for all $\la \in A_1 $ and $x \in \R^n $. Note that $L^{p'}(\R^n,
(\varphi_{i\rho, \K}(x))^{p'\gm{p'}}h_\K(x)^2 dx)$ is a subspace of $
 L^1(\R^n, h_\K^2(x) dx)$ for $1<p'<\infty$ and hence the
map
$$\la \mapsto \int_{\R^n}g(y)\tau_x\varphi_{\la}(y)h_\K^2(y)dy $$
is a continuous function on $\Lambda_p^0$. Moreover, by Morera and
Fubini, the map is holomorphic. Since $A_1$ is a nonempty open
subset of $\Lambda_p^0$ it follows that
$$\int_{\R^n}g(y)\tau_x\varphi_{\la}(y)h_\K^2(y)dy=0 $$
for all $\la \in \Lambda_p$; in particular, for all $\la \in \R$.
In view of the definition of $\tau_x\varphi_\la$ and Lemma 3.1
(1), we infer that for any $t>0$
$$e^{-t\Dk}g(x)= c_N \int_0^\infty e^{-t\la^2}\Big ( \int_{\R^n}
\tau_{-x}\varphi_{\la}(y)g(y)h_{\K}^2(y)dy \Big )
\la^{n+2\gm-1}d\la =0.$$ Since the heat semigroup $e^{-t\Dk}$ is
strongly continuous on $L^1(\R^n, h_\K^2 dx)$, $ g \in L^1(\R^n,
h_\K^2 dx)$, $e^{-t\Dk}g=0$ for every $t>0$ implies $g=0$. When $
p = 1 $ we have a bounded function $ g_1 $ such that
$$\int_{\R^n}g_1(y)\tau_x\varphi_{\la, \K}(y) \tilde{K}_{n/2+\gm}(y) dy=0 $$
for all $\la \in \Lambda_1.$  Since  $ g(y) = g_1(y)
\tilde{K}_{n/2+\gm}(y) $ belongs to $ L^1(\R^n, h_{\K}^2(x)dx) $
we can conclude that $ g_1 = 0 $ as before. This proves the
density of span of $ \mathcal{A}_1$. The density of the spans of
$\mathcal{A}_2$ and $\mathcal{A}_3$ are similarly proved.
\end{proof}

\begin{rem}\label{R2}
 By keen observation of the above proof, Proposition \ref{P6} still holds if we replace the space $L^p_{\rho, \K}(\R^n)$ by
 the weighted mixed norm space $L^{p,2}_{\rho, \K}(\R^n)$ for $1< p<\infty $, $p\neq 2 $. To see this we only have to
 check that $\mathcal{A}_j $ are subsets of $L^{p,2}_{\rho, \K}(\R^n)$ for $1\leq j \leq 3$ and the
 strong continuity of heat semigroup $e^{-t\Dk}$ on $L^{p',2}_{\rho, \K}(\R^n) $. The latter fact
 has been already proved in Theorem \ref{T14}. To see the inclusions, we have $\tau_x\varphi_\la\in L^{p,2}_{\rho,
 \K}(\R^n)$ whenever $\la \in \Lambda_p^0$.
 This also follows from the estimate $|E_{\K}(ix, \la \om)|\leq C
 e^{|\Im(\la)|\:|x|}$. It is then clear that $\mathcal{A}_j\subset L^{p,2}_{\rho, \K}(\R^n).$
\end{rem}

 Now we are ready to prove the remaining main theorems (Theorem \ref{T5} and Theorem \ref{T6}).\\

\noindent {\bf Proof of the Theorem \ref{T5}:}
 In Theorem \ref{T13} we already proved that $T_t = e^{-t{A}}$ defines
 a strongly continuous semigroup on $L^p_{\rho, \K}(\R^n)$ for $1\leq p < \infty.$
 In view of  Corollary \ref{C1} and Proposition \ref{P6}, $T_t^c$ is chaotic on
 $L^p_{\rho, \K}(\R^n)$ for $c>c_p$ and $1\leq p<\infty,~p\neq 2 $.
 This proves the sufficient part of part (1) of Theorem \ref{T5}. For necessary part
 we make use  of Theorem \ref{T16} according to which the point spectrum $ \si_{pt}(\Dk+\rho^2-c) $ of the operator
 $(\Dk+\rho^2-c) $ on $L^p_{\rho,\K}(\R^n) $ is given by
 $$ \mathfrak{P}^0_p -c =\{ \la^2+\rho^2-c: | \Im(\la)|< \gm_p \rho \}$$  for $1\leq p < \infty .$
 By the geometric form of the above set, it can be easily seen that
 the set $\si_{pt}(\D+\rho^2-c)\cap i\R  $ is empty for $c\leq c_p$ and
 hence in view of Theorem \ref{T1}, $T_t^c$ is not chaotic. This
 proves part (1) of Theorem \ref{T5}. Now we proceed to part (2).
 We note that for any $c\in \R$, the semigroup $T_t^c=e^{-t(A-c)}$ is
 not hypercyclic on $L^{\infty}_{\rho, \K}(\R^n)$, since
 for any $f \in L^{\infty}_{\rho, \K}(\R^n)$, $T_t^c f $ is a continuous bounded
 function and hence the closure of the orbit $\{ T_t^cf: t>0 \} $ in
 $L^{\infty}_{\rho, \K}(\R^n)$ is a subset of the subspace of all
 continuous bounded functions which is a strictly smaller than $L^{\infty}_{\rho,
 \K}(\R^n)$. This proves part (2). For part (3), we know that the $L^2-$spectrum
 $\si_2(\Dk) $ of $\Dk $ on  $ L^2_{\rho,\K}(\R^n) = L^2(\R^n, h_\K(x)^2 dx)
 $ is given by $[0, ~\infty) $ and hence $\si_2(\Dk+\rho^2-c)\cap i\R
 $ has at most one point.  In view of Theorem \ref{T1}, $T_t^c $ is
 not chaotic on $L^2_{\rho,\K}(\R^n)$. This proves part (3)
 which completes the proof of Theorem \ref{T5}.\\

\noindent {\bf Proof of the Theorem \ref{T5.1}:}
Let us define  $a_p := \frac{2\rho^2}{p'}$ for $1\leq p < 2$  and  $a_p:=  \frac{2\rho^2}{p} $ for $p>2$. In view of Theorem \ref{T13}, for any $f \in
 L^p_{\rho,\K}(\R^n)$, we have
 $$\|T_t^cf\|_{L^p_{\rho,\K}(\R^n)}=e^{ct}\|T_tf\|_{L^p_{\rho,\K}(\R^n)} \leq C t^{\frac{n+2\gm-1}{2}\gm_p}
 e^{-t(a_p-c)}\|f\|_{L^p_{\rho,\K}(\R^n)}$$
 where $T_t = e^{-tA}$. From the above, the operators $T^c_t$ are
 bounded uniformly in $t$ on $L^p_{\rho,\K}(\R^n)$ for $c<a_p$.
 Consequently, for each $f\in L^p_{\rho,\K}(\R^n)$
 the orbit $\{ T_t^cf: t>0 \} $ is a bounded set and hence it
 cannot be dense in $L^p_{\rho,\K}(\R^n)$. Thus $T_t^c$ fails
 to be hypercyclic. This proves part (1) and part (2)
 which completes the proof of Theorem \ref{T5.1}.\\

 \noindent
 {\bf Proof of the Theorem \ref{T6}:} In view of Theorem \ref{T14}, the
 semigroup $T^c_t$ is strongly continuous on $L^{p,2}_{\rho,
 \K}(\R^n)$ for $1<p<\infty, ~ p\neq 2.$  In view of Remark
 \ref{R2} and Corollary \ref{C1}, $T^c_t$ is chaotic on the space $L^{p,2}_{\rho,
 \K}(\R^n)$ for $c>c_p$ and $1\leq p<\infty, ~ p\neq 2 $. This proves
 the part (1). For part (2), in view of Theorem \ref{T14}, for any $f \in L^{p,2}_{\rho,
 \K}(\R^n)$ we have
 $$\|T^c_tf\|_{L^{p,2}_{\rho,\K}(\R^n)}\leq C t^{\frac{n+2\gm-1}{2}\gm_p}
 e^{-t(c_p-c)} \|f\|_{L^{p,2}_{\rho,\K}(\R^n)}.$$
 For $c<c_p$, by the same arguments given in the proof part (3) of Theorem \ref{T5},
 $T_t^c$ fails to be hypercyclic. This proves part (2)
 which completes the proof of Theorem \ref{T6}.\\

%--------------------------------------------------------------------------

\begin{center}
{\bf Acknowledgments}
\end{center}
The first author is thankful to CSIR, India, for
the financial support. The work of the second author is supported
by J. C. Bose Fellowship from the Department of Science and
Technology (DST) and also by a grant from UGC via DSA-SAP. Both authors wish
to thank Rudra Sarkar for some useful conversations regarding the subject matter of this article.

%-------------------------------------------------------------------------

\end{document}